\documentclass[11pt,leqno]{article}

\usepackage{amsthm,amsfonts,amssymb,amsmath,oldgerm}
\usepackage{fullpage}
\usepackage{graphicx}
\usepackage{mathrsfs}
\usepackage{xcolor}
\usepackage{mathtools}
\usepackage[colorinlistoftodos]{todonotes} 
\usepackage[hidelinks]{hyperref}
\usepackage{pgfplots}
\pgfplotsset{compat=1.18}
\usepackage{enumitem}
\usepackage{comment}
\usepackage{bm}

\usepackage{tocloft}
\usepackage[backend=biber,
style=alphabetic,
sorting=nyt,
giveninits=true,
isbn=false,
url=false,
maxbibnames=99]{biblatex}

\renewbibmacro*{url+urldate}{}

\newtoggle{bbx:boldentries}
\DeclareBibliographyCategory{boldentry}
\AtEveryBibitem{\ifboolexpr{togl {bbx:boldentries} and test {\ifcategory{boldentry}}}{\bfseries}{}}

\AtEveryBibitem{\clearfield{month}}
\AtEveryBibitem{\clearfield{day}}
\AtEveryBibitem{\clearfield{note}}

\setlist[enumerate,2]{label=(\alph*), ref=\theenumi(\alph*)}

\renewbibmacro{in:}{%
  \ifentrytype{article}{}{%
    \printtext{\bibstring{in}\intitlepunct}}}

\usepackage{hyperref}
\usepackage{xurl}

\usepackage{tikz}
\usepackage{pgfplots}
\pgfplotsset{compat=1.11}
\usepgfplotslibrary{fillbetween}
\usetikzlibrary{spy,calc,arrows.meta,calc,decorations.markings,math,arrows.meta,shapes.misc}
\tikzset{cross/.style={cross out, draw=black, fill=none, minimum size=2*(#1-\pgflinewidth), inner sep=0pt, outer sep=0pt}, cross/.default={2pt}}

\numberwithin{equation}{section}

\renewcommand{\theenumi}{(\roman{enumi})}

\newcommand{\R}{{\mathbb R}}\newcommand{\N}{{\mathbb N}}

\newcommand{\Z}{{\mathbb Z}}

\newcommand{\de}{\mathrm{d}}

\newcommand{\curlB}{\mathcal{B}}
\newcommand{\curlL}{\mathcal{L}}

\newcommand{\curlR}{\mathcal{R}}
\newcommand{\curlA}{\mathcal{A}}

\newcommand{\curlU}{\mathcal{U}}

\newcommand{\curlF}{\mathcal{F}}
\newcommand{\curlS}{\mathcal{S}}
\newcommand{\curlG}{\mathcal{G}}
\newcommand{\curlC}{\mathcal{C}}
\newcommand{\curlK}{\mathcal{K}}
\newcommand{\curlX}{\mathcal{X}}
\newcommand{\curlY}{\mathcal{Y}}
\newcommand{\curlM}{\mathcal{M}}
\newcommand{\curlT}{\mathcal{T}}
\newcommand{\snorm}[2][]{\ensuremath{\left\vert#2\right\vert_{#1}}}
\newcommand{\norm}[2][]{\ensuremath{\left\|#2\right\|_{#1}}}

\newcommand{\frakF}{\mathfrak{F}}
\newcommand{\frakG}{\mathfrak{G}}

\newcommand{\Holdernormper}[3]{\left\| #1 \right\|_{C^{#2,#3}_\mathrm{per}}}
\newcommand{\Holderspace}[2][\gamma]{C^{#2,#1}_\mathrm{per}}

\newcommand{\pernorm}[2]{\left\| #1 \right\|_{C^{#2}_\mathrm{per}}}
\newcommand{\perspace}[1]{C^{#1}_\mathrm{per}}

\newcommand{\curl}{\nabla \times}

\DeclareMathOperator{\spn}{span}
\DeclareMathOperator{\ran}{ran}

\let\epsilon\varepsilon

\newtheorem{theorem}{Theorem}[section]\newtheorem{lemma}[theorem]{Lemma}

\newtheorem{proposition}[theorem]{Proposition}
\newtheorem{corollary}[theorem]{Corollary}

\newtheorem{remark}[theorem]{Remark}
\allowdisplaybreaks[3]

\title{Global bifurcation of doubly periodic gravity-capillary waves on Beltrami flows}

\author{\hyperlink{affBH}{Bastian Hilder}, \hyperlink{affGT}{Giang To}, \hyperlink{affEW}{Erik Wahl\'en}}

\begin{document}

\maketitle

\begin{abstract}
    We prove the existence of a global family of steady, doubly periodic gravity-capillary waves on Beltrami flows. This is the first rigorous existence result for genuinely three-dimensional inviscid surface waves, with or without vorticity, beyond the perturbative regime close to simple explicit solutions. The proof is based on reformulating the steady water wave problem as a bifurcation problem of the form `identity plus compact' and applying a global bifurcation argument in Hölder spaces. The main challenge is that the kernel of the linearisation at the bifurcation point is two-dimensional, and that both kernel elements are necessary to obtain genuinely three-dimensional solutions. Since this prevents the use of classical global bifurcation theory, we introduce a novel reformulation of the bifurcation problem using the parameterisation of a local family of solutions bifurcating from laminar flow. In this new parameter space, we apply a variation of analytic global bifurcation theory. Along the branch, we then prove a sharper blow-up alternative, namely blow-up of the surface gradient in $C^{0,\gamma}$.

    \vspace{1em}

    \noindent\textbf{Keywords:} three-dimensional water waves; Beltrami flows; doubly periodic waves; global bifurcation theory; Schauder estimates

    \noindent\textbf{Mathematics Subject Classification (2020):} 76B15; 76B45; 35R35; 35B32; 35B65; 47J15

\end{abstract}

\setcounter{tocdepth}{1}
\tableofcontents

\section{Introduction}

The mathematical understanding of steady nonlinear surface waves in inviscid fluids, known as the steady water wave problem, has seen remarkable progress over the last century. Yet, while there is by now a well-developed theory for two-dimensional water waves, even in the presence of vorticity, many questions remain open in the three-dimensional case. This discrepancy can be explained by the breakdown of mathematical structures that are essential to the two-dimensional theory. Most notably, there is no clear analogue of the stream function in the three-dimensional case. Despite these structural obstructions, a number of three-dimensional water waves have been constructed recently. However, all results are restricted to a perturbative regime close to a `simple' solution to the water wave problem, and there is currently a complete lack of existence theory beyond onset. To address this gap, we prove the existence of a global family of genuinely three-dimensional, doubly periodic steady water waves with vorticity, bifurcating from a trivial laminar flow.

We now describe the problem in detail. We consider three-dimensional steady water waves driven by gravity and surface tension. More precisely, we study an incompressible, inviscid fluid, which occupies the a priori unknown domain
\begin{equation*}
    \Omega^\eta = \{\bm x=(\bm{x}',z) \in \R^2 \times \R : -d < z < \eta(\bm{x}^\prime)\},
\end{equation*}
where $\bm{x}'=(x,y)$, $\eta \colon \R^2 \rightarrow \R$ is the free surface profile and $d > 0$ is the constant depth of the fluid; see Figure \ref{fig:water-waves}. The fluid motion in the interior of the fluid domain $\Omega^\eta$ is described by the velocity field $\bm{u} : \overline{\Omega^\eta} \rightarrow \R^3$ and the pressure $p \colon \overline{\Omega^\eta} \rightarrow \R$. After potentially shifting to an appropriate co-moving frame, we assume that $\bm{u}$ and $p$ are solutions to the stationary Euler equations with gravity and surface tension given by
\begin{alignat*}{2}
        (\bm{u} \cdot \nabla) \bm{u} &= - \nabla p - g \bm{e}_3 \qquad &&\text{ in } \Omega^\eta, \\
        \nabla \cdot \bm{u} &= 0 &&\text{ in } \Omega^\eta,
\end{alignat*}
where $g > 0$ is the gravitational constant and $\bm{e}_3 = (0,0,1)$ is the unit vector in vertical direction. To close the system, these interior equations are supplemented by kinematic boundary conditions at the flat bed and the free surface, namely,
\begin{equation*}
    \bm{u} \cdot \bm{n} = 0 \qquad \text{ on } \partial\Omega^\eta,
\end{equation*}
where $\bm{n}$ is the outward unit normal to $\partial\Omega^\eta$, and the dynamic boundary condition at the free surface, which is given by
\begin{equation*}
    p = p_\mathrm{atm} - 2\sigma K_M \qquad \text{ on } z = \eta(\bm{x}^\prime).
\end{equation*}
Here, $p_\mathrm{atm}$ is the constant atmospheric pressure, $\sigma > 0$ is the coefficient of surface tension, and $K_M$ is the mean curvature given by
\begin{align*}
    2K_M = \nabla \cdot \left(\dfrac{\nabla \eta}{\sqrt{1+\snorm{\nabla\eta}^2}}\right).
\end{align*}

\begin{figure}
    \centering
    \includegraphics[width=0.7\textwidth]{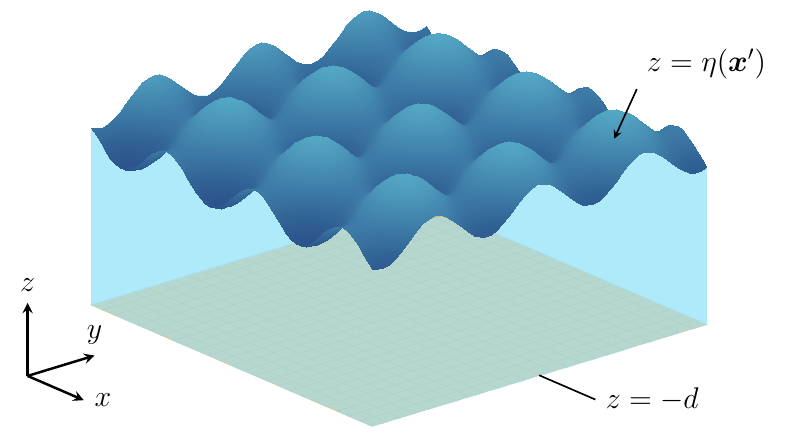}
    \caption{Schematic depiction of a three-dimensional water wave problem with finite depth and a doubly periodic surface profile.}
    \label{fig:water-waves}
\end{figure}

An important observable in fluid flows is the vorticity $\bm{\omega} = \nabla \times \bm{u}$. Historically, the vast majority of research on steady water waves has assumed that the flow is irrotational, that is, $\bm{\omega} = 0$. However, in recent years, the study of rotational water waves has gained increasing attention, starting with \cite{constantin2004}. A special class of flows with non-trivial vorticity are Beltrami flows, where the vorticity is collinear to the velocity field, that is,
\begin{equation*}
    \nabla \times \bm{u} = \alpha \bm{u} \qquad \text{ in } \Omega^\eta.
\end{equation*}
Specifically, we consider the case of a strong Beltrami field, where $\alpha$ is a constant. Beltrami fields, also called force-free fields, are well-known in solar and plasma physics; see, for example, \cite{priest1982,boulmezaoud1999,freidberg2014,priest2014}. In fluid mechanics, they are of special interest since any strong, divergence-free Beltrami field is a solution of the steady Euler equations, where the pressure is given by
\begin{equation*}
    p = C - \dfrac{|\bm{u}|^2}{2} - g z.
\end{equation*} 
Hence, the water wave problem for Beltrami flows can be reformulated as a div-curl problem for the velocity field $\bm{u}$ together with appropriate boundary conditions for the free surface profile $\eta$, which can be written as
\begin{subequations}
    \label{eq:formulation-without-integral-constraint}
    \begin{alignat}{2}
            \nabla \times \bm{u} &= \alpha \bm{u}  &&\text{in } \Omega^\eta,\\
            \nabla \cdot \bm{u} &= 0 &&\text{in } \Omega^\eta,\\
            \bm{u} \cdot \bm{n} &= 0 &&\text{on } \partial\Omega^\eta,\\
            \dfrac{1}{2} \snorm{\bm{u}}^2 + g\eta - 2\sigma K_M &= Q \qquad &&\text{on } z = \eta,
    \end{alignat}
\end{subequations}
where $Q \in \R$ is the Bernoulli constant.

The elliptic free-boundary problem \eqref{eq:formulation-without-integral-constraint} has a family of explicit solutions with flat surface profile $\eta \equiv 0$, called \emph{laminar flows}, given by
\begin{equation}\label{eq:laminar-flow}
    \bm{U}(\bm{c}) = c_1 \bm{U}^{(1)} + c_2 \bm{U}^{(2)}, \quad c_1,c_2 \in \R,
\end{equation}
where
\begin{equation*}
    \bm{U}^{(1)} = (\cos(\alpha z), -\sin(\alpha z),0), \quad \bm{U}^{(2)} = (\sin(\alpha z), \cos(\alpha z),0);
\end{equation*}
see Figure \ref{fig:laminar-flow}.
In this case, the corresponding Bernoulli constant is given by $Q(\bm c) = \frac{1}{2}(c_1^2+c_2^2)$.

\begin{remark}\label{rem:non-uniqueness}
    We point out that, for a given surface profile $\eta$, a corresponding velocity field satisfying \eqref{eq:formulation-without-integral-constraint} is in general not unique. Indeed, if $\bm{u}$ is a solution, then so is $\bm{u}+\bm{v}$ for any $\bm{v}$ solving the homogeneous div-curl problem $\nabla \times \bm{v} = \alpha \bm{v}$, $\nabla \cdot \bm{v} = 0$ in $\Omega^\eta$, with $\bm{v} \cdot \bm{n} = 0$ on $\partial\Omega^\eta$, with the Bernoulli constant $Q$ adjusted accordingly in the dynamic boundary condition. This can be seen even in the case of a flat surface $\eta \equiv 0$ by noting that the laminar flows \eqref{eq:laminar-flow} occur in a two-parameter family for fixed $\alpha$. To restore uniqueness, we will follow \cite{lokharu2020} and impose an additional integral constraint; see \eqref{eq:integralCond}.
\end{remark}

\begin{remark}
    Although it would be desirable to treat more general flows, the assumption that the velocity field is a Beltrami field is mathematically convenient, since it yields an elliptic free-boundary problem. In this context, we also recall Arnold's structure theorem (see, for example, \cite[Theorem II.1.2]{arnold2021}), which establishes a dichotomy for steady solutions to the Euler equations: any non-Beltrami velocity field has a relatively simple topological structure and, in particular, has a first integral. In contrast, Beltrami fields allow for a much more complicated topology; see, for example, \cite[Appendix A.II.1]{arnold2021}.
\end{remark}

\begin{figure}[h!]
    \centering
    \includegraphics[width=0.65\textwidth]{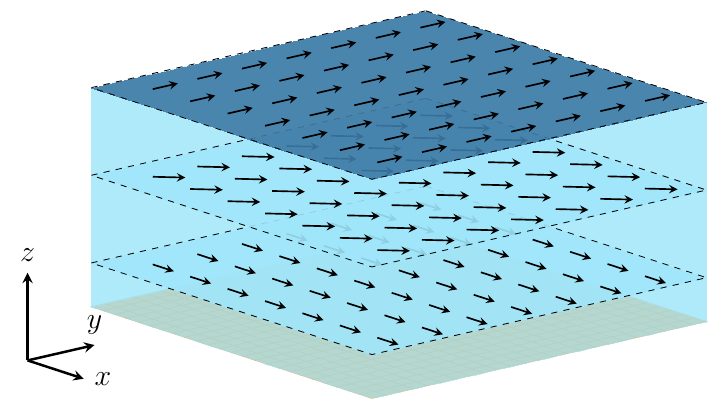}
    \caption{Schematic depiction of the laminar flow \eqref{eq:laminar-flow} with flat surface profile $\eta \equiv 0$.}
    \label{fig:laminar-flow}
\end{figure}

\subsection{Problem formulation and main results}

In this paper, we are interested in the existence of three-dimensional, doubly periodic water waves where the velocity field is a strong Beltrami field and which bifurcate from the laminar flow \eqref{eq:laminar-flow}. Therefore, we consider solutions to \eqref{eq:formulation-without-integral-constraint}, which are spatially periodic with respect to the lattice
\begin{align}\label{eq:lattice}
    \Lambda \coloneqq\{\bm{\lambda} = m_1 \bm{\lambda}_1 + m_2 \bm{\lambda}_2 :  m_1,m_2 \in \Z\}
\end{align}
for two linearly independent vectors $\bm{\lambda}_1,\bm{\lambda}_2$; see Figure \ref{fig:periodic-lattice}. This means that
\begin{align*}
    \eta(\bm{x}^\prime + \bm{\lambda}) = \eta(\bm{x}^\prime), \quad \bm{u}(\bm{x}^\prime + \bm{\lambda}, z) = \bm{u}(\bm{x}^\prime,z)
\end{align*}
for all $\bm{\lambda} \in \Lambda$.
Hence, we split the domain $\Omega^\eta$ into periodic cells
\begin{align*}
    \Omega^\eta_{lj} &\coloneqq \{(\bm{x}^\prime,z) : -d < z < \eta(\bm{x}^\prime),\, \bm{x}^\prime \in B_{lj}\}, \\
    B_{lj} &\coloneqq \{(a_1 + l)\bm{\lambda}_1 + (a_2 + j)\bm{\lambda}_2 : a_1,a_2 \in (0,1)\}
\end{align*}
for  $l,j \in \Z$. This set-up allows us to define periodic H\"older spaces $C^{k,\gamma}_\mathrm{per}(\mathcal{O};\R^m)$ for $k \geq 0$, $\gamma \in (0,1)$, $m \geq 1$, and $\mathcal{O} \in \{\R^2, \overline{\Omega^\eta}\}$, which denote the usual spaces of H\"older continuous functions on $\mathcal{O}$ with values in $\R^m$, which are spatially periodic with respect to the lattice $\Lambda$. Additionally, the space $C^{k,\gamma}_\mathrm{per}(\mathcal{O};\R^m)$ is equipped with the usual H\"older norm 
\begin{equation*}
    \|\bm{u}\|_{C_\mathrm{per}^{k,\gamma}(\mathcal{O};\R^m)} = \sum_{j \leq k} \sup_{\bm x\in \mathcal{O}}|D^{j} \bm{u}(\bm x)| + \sup_{\substack{\bm{x}, \bm{y} \in \mathcal{O} \\ \bm{x} \neq \bm{y}}} \dfrac{|D^k \bm{u}(\bm{x}) - D^k \bm{u}(\bm{y})|}{|\bm{x} - \bm{y}|^\gamma}.
\end{equation*}

\begin{figure}
    \centering
    \includegraphics[width=0.5\textwidth]{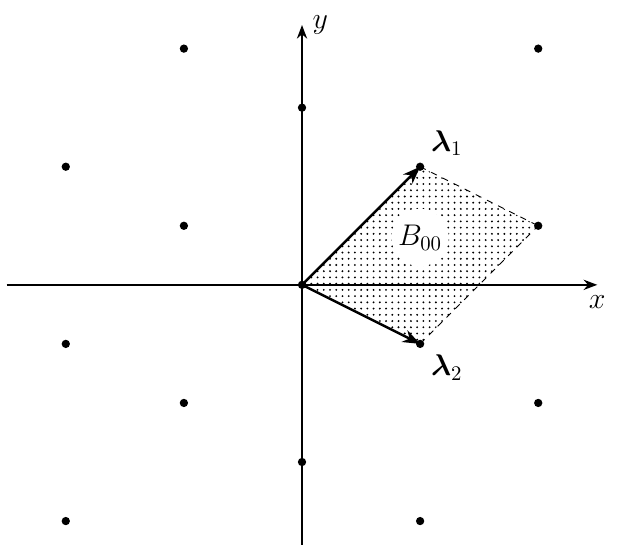}
    \caption{Periodic lattice spanned by $\bm{\lambda}_1$ and $\bm{\lambda}_2$ defining the doubly periodic waves and the bottom cell $B_{00}$.}
    \label{fig:periodic-lattice}
\end{figure}

\begin{remark}
    When it is clear from the context, we will omit the domains in the notation of the periodic H\"older spaces and for example write $C^{k,\gamma}_\mathrm{per}$ instead of $C^{k,\gamma}_\mathrm{per}(\R^2;\R)$ or $C^{k,\gamma}_\mathrm{per}(\overline{\Omega^\eta};\R^3)$, which are the usual spaces for the surface profile and the velocity field, respectively.
\end{remark}

As discussed in Remark \ref{rem:non-uniqueness}, we follow \cite{lokharu2020} and enforce uniqueness by adding an integral constraint to \eqref{eq:formulation-without-integral-constraint}, so that we consider
\begin{subequations}
    \label{eq:formulation}
    \begin{alignat}{2}
            \nabla \times \bm{u} &= \alpha \bm{u}  &&\text{in } \Omega^\eta, \label{eq:curl}\\
            \nabla \cdot \bm{u} &= 0 &&\text{in } \Omega^\eta, \label{eq:div} \\
            \bm{u} \cdot \bm{n} &= 0 &&\text{on } \partial\Omega^\eta, \label{eq:kinboundary}\\
            \int_{\Omega_{00}^\eta} u_j \,\de V &= M_j(\eta, \bm{c}) \quad &&\text{for } j = 1,2, \label{eq:integralCond}\\
            \dfrac{1}{2} \snorm{\bm{u}}^2 + g\eta - \sigma \nabla \cdot \left(\dfrac{\nabla \eta}{\sqrt{1+\snorm{\nabla\eta}^2}}\right) &= Q(\bm{c}) &&\text{on } z = \eta, \label{eq:dynBoundary}
    \end{alignat}
\end{subequations}
where $Q(\bm{c}) = \frac{1}{2}(c_1^2+c_2^2)$ and
\[
M_j(\eta, \bm{c})\coloneqq \int_{\Omega_{00}^\eta} U_j(\bm{c}) \,\de V
\]
for the laminar flow $\bm{U}(\bm{c})$; see \eqref{eq:laminar-flow}. Indeed, the div-curl problem \eqref{eq:curl}--\eqref{eq:kinboundary} with the additional integral constraint \eqref{eq:integralCond} admits a unique solution for each fixed surface profile; see Theorem \ref{thm:div-curl-physical-domain}.

Doubly periodic solutions to \eqref{eq:formulation} close to a laminar flow have recently been constructed in \cite{lokharu2020} using multi-parameter bifurcation theory. The contribution of this paper is to extend these solutions globally, and we now summarise the main results and techniques. We show that
\begin{itemize}
    \item all steady, three-dimensional, doubly periodic water waves on Beltrami flows solving \eqref{eq:formulation-without-integral-constraint} are analytic under mild regularity assumptions; see Corollary \ref{cor-app:analyticity} and Remark \ref{rem:analyticity};
    \item after including the integral constraint \eqref{eq:integralCond}, the problem \eqref{eq:formulation} admits a reformulation as `identity plus compact' in appropriate H\"older spaces; see Section \ref{sec:final-reformulation};
    \item in this formulation, a local family of doubly periodic waves bifurcates from the laminar flow, see Theorem \ref{thm:local bifurcation}, which matches the result in \cite{lokharu2020};
    \item the local solutions can be continued to a global family of doubly periodic waves; see Theorem \ref{thm:global-bif-Beltrami}. Specifically, we prove a sharper version of the blow-up alternative, that is, if the solution blows up in an appropriate norm, then the surface gradient $\nabla \eta$ must become unbounded in $C^{0,\gamma}_\mathrm{per}$.
\end{itemize}

To the best of our knowledge, these results are the first rigorous existence results on three-dimensional, doubly periodic water waves with vorticity, which go beyond a local analysis close to the laminar flow solutions. Additionally, although our results are formulated for Beltrami fields ($\alpha \neq 0$), they also apply to the irrotational case $\alpha = 0$. Specifically, to the best of our knowledge, this also provides the first global bifurcation result for three-dimensional water waves, with or without vorticity.

\paragraph{Reformulation as `identity plus compact'.}

The first step in our bifurcation analysis is to find a suitable reformulation of the problem \eqref{eq:formulation} that is amenable to global bifurcation analysis. For this, we first transform the free-boundary problem to a fixed domain using the flattening transform $\frakF : \overline{\Omega^0} \rightarrow \overline{\Omega^\eta}$; see \eqref{eq:flattening-trafo}. This yields a div-curl problem for the flattened velocity field $\bar{\bm{u}}$ on a fixed domain $\Omega^0$, which reads as
\begin{subequations}
    \label{eq:flattened-div-curl-into}
    \begin{alignat}{2}
            \nabla \times (A(\eta)\bar{\bm{u}}) &= \alpha \bar{\bm{u}} \quad &&\text{in } \Omega^0,\\
            \nabla \cdot \bar{\bm{u}} &= 0 &&\text{in } \Omega^0, \\
            \bar{\bm{u}} \cdot \bm{n} &= 0 &&\text{on } \partial \Omega^0, \\
            \int_{\Omega_{00}^0} \bar{u}_j \,\de V  &= M_j \quad &&\text{for } j = 1,2,
    \end{alignat}
\end{subequations}
where $A(\eta) \in \R^{3 \times 3}$ is given by \eqref{eq:Aeta}. In addition, the flattened dynamic boundary condition then becomes
\begin{equation}\label{eq:flattened-dyn-bc-into}
    \dfrac{1}{2} B(\bar{\bm{u}},\eta) + g\eta - \sigma \nabla \cdot \left(\dfrac{\nabla \eta}{\sqrt{1 + |\nabla \eta|^2}}\right) = Q(\bm c)
\end{equation}
with $B$ defined in \eqref{eq:dyn-bc-flattened}.

The goal is to rewrite this as a fixed-point problem of the form `identity plus compact'. This is a well-known approach for global bifurcation analysis since it guarantees that the linearisation of the bifurcation problem about any solution is a Fredholm operator with index zero; see, for example, \cite{rabinowitz1971,kielhofer2012}. This reformulation is based on two main steps.

First, we show that the flattened div-curl problem \eqref{eq:flattened-div-curl-into}, after replacing $\alpha \bar{\bm{u}}$ by a function $\bar{\bm{w}}$, admits a unique solution $\bar{\curlL}$ for given $\bar{\bm{w}}$, $\eta$ and $\bm{M}$, which satisfies a regularity estimate in H\"older spaces; see Theorem \ref{thm:solution-flattened-div-curl}. This result is based on Schauder estimates for div-curl problems, which are proved in Appendix \ref{sec:Schauder estimates}. Here, we also exploit that the solution is unique due to the additional integral constraint to find an upper bound in the regularity estimate purely in terms of $\bar{\bm{w}}$, $\eta$ and $\bm{M}$. Using the operator $\bar{\curlL}$, the div-curl problem \eqref{eq:flattened-div-curl-into} can then be written as a fixed point problem $\bar{\bm{u}} = \bar{\curlL}(\alpha \bar{\bm{u}}, \eta, \bm{M})$ for the flattened velocity field $\bar{\bm{u}}$.

\begin{remark}
\label{rem:reduction to boundary}
In contrast to \cite{lokharu2020, groves2024, seth2024}, we do not reduce the equations to the boundary. The reason is that this is only possible if $\alpha$ is not an eigenvalue of the curl operator \cite{Picard98, YoshidaGiga90}. While our assumptions guarantee that $\alpha$ is not an eigenvalue at the bifurcation point, we cannot guarantee that this property continues to hold along the set of non-trivial solutions.   
\end{remark}

Second, we rewrite the flattened dynamic boundary condition \eqref{eq:flattened-dyn-bc-into} as a fixed point problem for the surface profile $\eta$ for given $\bar{\bm{u}}$ and $\bm{c} = (c_1,c_2)$. For this, we write the mean curvature term as $\curlM(\nabla \eta) : D^2 \eta$ such that the dynamic boundary condition can be written as
\begin{equation*}
    L(\nabla \eta) \eta \coloneqq -\curlM(\nabla \eta) : D^2 \eta + g\eta = - \dfrac{1}{2} B(\bar{\bm{u}},\eta) + Q(\bm c).
\end{equation*}
The main observation is that the equation $L(\bm{h}) \eta = f$ for given $\bm{h}$ and $f$ has a unique solution $\eta = \curlK(\bm{h},f)$ in H\"older spaces using the method of continuity; see Theorem \ref{thm:dynBoundaryCond}. This also shows that the solution satisfies an elliptic regularity estimate in H\"older spaces, where we additionally utilise a strong maximum principle.

\begin{remark}
    A similar analysis of the dynamic boundary condition was recently done in \cite{nguyen2026-03Nonlinearity} in the context of a continuation analysis for periodic travelling waves for Darcy flows. Instead of treating the regularity theory via linear theory as outlined above, the author obtains invertibility and regularity directly for the full nonlinear capillary-gravity operator $L(\nabla \eta) \eta$ under slightly stronger initial regularity assumptions. 
\end{remark}

Combining these two steps, we can rewrite the flattened existence problem \eqref{eq:flattened-div-curl-into}--\eqref{eq:flattened-dyn-bc-into} as a fixed point problem
\begin{equation*}
    \bar{\bm{u}} = \bar{\curlL}(\alpha \bar{\bm{u}}, \eta, \bm{M}), \quad \eta = \curlK\left(\nabla \eta, - \dfrac{1}{2} B(\bar{\bm{u}},\eta) + Q(\bm c)\right),
\end{equation*}
see Section \ref{sec:final-reformulation}. This is indeed of the form `identity plus compact' in appropriate H\"older spaces; see Theorem \ref{thm:compactness} and Corollary \ref{Fredholm_op}. For this, we use the regularity estimates for the solution operators $\bar{\curlL}$ and $\curlK$, together with standard compact embeddings for H\"older spaces. In particular, we use that $\bar{\bm{u}}$ in the fixed point problem for $\eta$ can be replaced by $\bar{\curlL}(\alpha \bar{\bm{u}}, \eta, \bm{M})$, which is of higher regularity due to the regularity estimate for $\bar{\curlL}$.

\paragraph{Local bifurcation analysis.}
In preparation for the global bifurcation analysis, we prove the local bifurcation result in \cite{lokharu2020} in our formulation; see Theorem \ref{thm:local bifurcation}. The main obstacle preventing the use of the classical bifurcation theory by Crandall and Rabinowitz \cite{crandall1971} is that the kernel of the linearisation about the laminar flow at the bifurcation point is, in general, two-dimensional. In fact, it was pointed out in \cite[Rem.~4.7]{lokharu2020} that a two-dimensional kernel is necessary to obtain genuinely three-dimensional waves, and that the case of a one-dimensional kernel leads to $2\tfrac{1}{2}$-dimensional waves; see, for example, \cite[Sec.~1.1.2]{lokharu2020}. Instead, we utilise a recent result on abstract multi-parameter bifurcation theory in \cite{seth2024}, which we recapitulate in Appendix \ref{sec:abstract local bif} for the convenience of the reader; see Theorem \ref{thm:seth}. This shows that there exists a local family of genuinely three-dimensional doubly periodic waves
\begin{equation*}
    \{(\bar{\bm{u}}(\bm{s}), \eta(\bm{s}), \bm{c}(\bm{s})) : \bm{s} \in B_\varepsilon(0;\R^2)\}
\end{equation*}
for $\varepsilon > 0$ sufficiently small, bifurcating from the laminar flow at $\bm{c}(0) = \bm{c}^* \in \R^2$; see Theorem \ref{thm:local bifurcation}. Here, $B_\varepsilon(0;\R^2)$ denotes an open ball of radius $\varepsilon$ centred at the origin in $\R^2$.

\paragraph{Global bifurcation analysis.}

The goal is to extend the local bifurcation branch to a global branch of solutions using analytic global bifurcation theory \cite{buffoni2003}. Similar to the local analysis, the main obstacle in the global bifurcation analysis is again the two-dimensional kernel at the bifurcation point, which prevents the use of classical analytic global bifurcation results from a one-dimensional kernel (see, e.g., \cite[Thm.~9.1.1]{buffoni2003}). Moreover, the fact that the kernel dimension is even presents a serious obstacle for the application of global bifurcation methods based on degree theory (see, e.g., \cite{antman2005}), as the crossing of an even number of eigenvalues through zero typically does not result in a change of degree. We also refer to Section \ref{sec:related-results} for a more detailed review of the multiparameter global bifurcation literature.

We overcome this issue by rewriting the bifurcation problem to treat the parameterisation $\bm{s} \in \R^2$ in the local bifurcation result as new bifurcation parameters. We then reduce the problem to a one-parameter bifurcation problem by fixing a direction $\hat{\bm{s}} = (\hat{s}_1,\hat{s}_2) \in S^1 \subset \R^2$ with $\hat{s}_1\hat{s}_2 \neq 0$, where $S^1$ denotes the unit circle in $\R^2$, and considering the problem along the line $\{\bm{s} = r\hat{\bm{s}} \,:\ r \in \R\}$; see Figure \ref{fig:schematic-global-bifurcation} and \eqref{eq:curlG}. The key observation is now that the linearisation of this new bifurcation problem about a solution on the local bifurcation branch is an isomorphism for all $r \in (0,r_0)$ for some $r_0 > 0$ depending on the choice of $\hat{\bm{s}}$; see Proposition \ref{prop:isomorphism}. We can only prove this result for $\hat{\bm{s}}$ with non-zero components. However, this only excludes cases where the bifurcating solutions locally are $2\tfrac{1}{2}$-dimensional waves; see Remark \ref{rem:choice-of-hat-s}.

\begin{figure}[h!]
    \includegraphics[width=\textwidth]{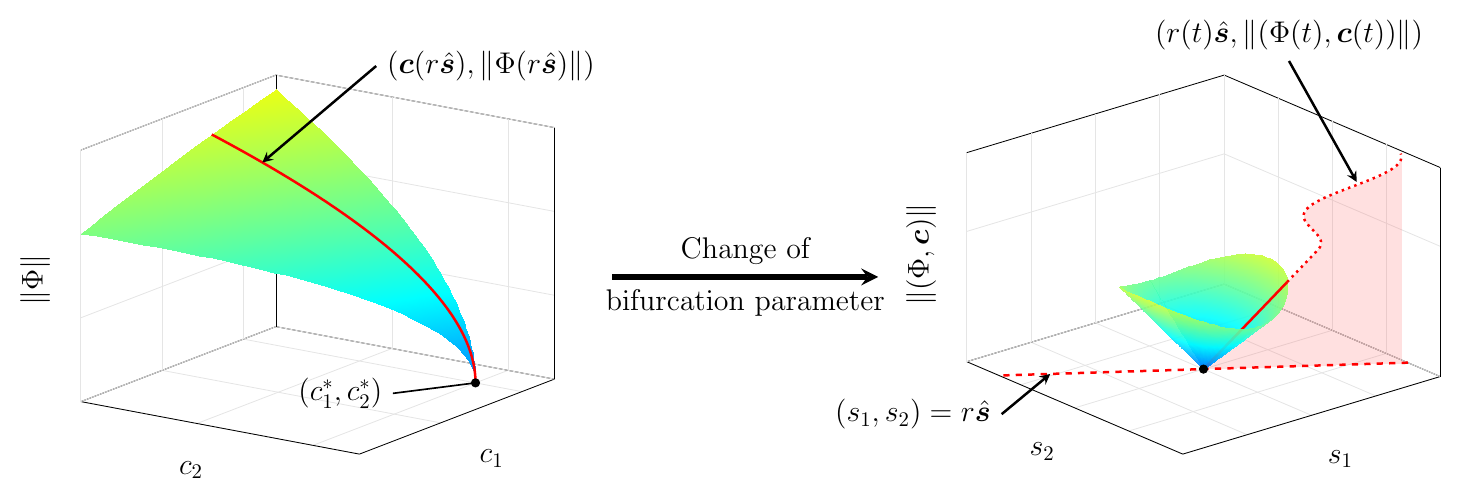}
    \caption{Visual of the global bifurcation strategy. Here, $\Phi = (\bm{u},\bm{c})$ denotes the collection of unknowns. Left: schematic local bifurcation diagram of with surface $(\bm{c}(\bm{s}),\|\Phi(\bm{s})\|)$ of bifurcating solutions parameterised by $\bm{s} \in \R^2$ in a neighbourhood of $\bm{c}^*$. The red curve indicates a one-parameter slice along the direction $\hat{\bm{s}} \in S^1$. Right: schematic bifurcation diagram after changing the bifurcation parameters to $\bm{s}$. The surface again depicts the local bifurcation set with the curve obtained by selecting $\hat{\bm{s}}$ depicted in red. Here, we continue the selected curve globally (dotted red curve) using one-parameter global bifurcation theory. Varying $\hat{\bm{s}}$ then yields a continuum of global bifurcation branches.}
    \label{fig:schematic-global-bifurcation}
\end{figure}

While the bifurcation problem still has a two-dimensional kernel at the bifurcation point, we can apply a variation of the analytic global bifurcation theorem, see Theorem \ref{thm-app:global-bifurcation}, which is based on the observation that the one-dimensional kernel is only needed in the local analysis to guarantee that the linearisation is an isomorphism in a neighbourhood of the bifurcation point. Since this is automatically guaranteed for the reformulated bifurcation problem, we obtain a global continuation for any fixed $\hat{\bm{s}}$ with $\hat{s}_1 \hat{s}_2 \neq 0$; see Figure \ref{fig:schematic-global-bifurcation}. We can then vary $\hat{\bm{s}}$ to obtain a family of global solutions, which bifurcate from the laminar flow; see Theorem \ref{thm:global-bif-Beltrami}.

\begin{remark}
    We expect this approach to apply more generally to multi-parameter global bifurcation problems with a higher-dimensional kernel. In fact, we present the key results providing the local isomorphic structure as well as the necessary variation of the analytic global bifurcation theory in an abstract fashion, which can readily be applied to other problems; see Proposition \ref{prop-app:isomorphism} and Theorem \ref{thm-app:global-bifurcation}.
\end{remark}

\paragraph{Refinement of alternatives.}

For each $\hat{\bm{s}}$, analytic global bifurcation theory yields a number of alternatives, at least one of which has to occur along the bifurcation branch. These are: (i) a blow-up of the solution in an appropriate norm, (ii) that the bifurcation branch approaches the boundary of the domain of definition of the bifurcation problem along a subsequence, or (iii) that the bifurcation branch is periodic. Here, alternative (ii) can be interpreted as the surface profile $\eta$ eventually intersecting with the flat bed $z = -d$.

In our case, the first alternative reads as $(\bar{\bm{u}}, \eta, \bm{c})$ blows up in $C^{k,\gamma}_\mathrm{per}(\overline{\Omega^0};\R^3) \times C^{k+2,\gamma}_\mathrm{per}(\R^2) \times \R^2$ for some $k \geq 0$. It turns out that, using the regularity estimates for the solution operators $\bar{\curlL}$ and $\curlK$ obtained in the reformulation of the problem, this can be reduced to the blow-up of $(\bar{\bm{u}}, \eta, \bm{c})$ in $C^{0,\gamma}_\mathrm{per}(\overline{\Omega^0};\R^3) \times C^{1,\gamma}_\mathrm{per}(\R^2) \times \R^2$.

To further refine the blow-up alternative, we establish a low-order Schauder-type estimate for the (flattened) velocity field in $C^{0, \gamma}_\mathrm{per}$; see \eqref{est-u_0_gamma}. Although its proof follows similar ideas to those in \cite{agmon1964,morrey1966}, to the best of our knowledge, it has not been reported in the literature and may therefore be of independent interest. Additionally, we exploit that for any solution of the bifurcation problem, the flattened velocity field in $C^{0,\gamma}_\mathrm{per}$ can be estimated by the flattened velocity field at the top boundary $z = 0$ and the mean values $M_j$; see Lemma \ref{est-of-u-in-eta}. Combining these two estimates with the Bernoulli equation, we show that blow-up of the velocity field cannot occur as an isolated alternative. Rather, we prove that the blow-up alternative (i) can be refined to the blow-up of $(\eta, \bm{c})$ in $C^{1,\gamma}_\mathrm{per}(\R^2) \times \R^2$; see Theorem \ref{thm:global-bif-Beltrami}.

\begin{remark}
    \label{rem:analyticity}
    Although low-order elliptic estimates are necessary for the fine analysis of the blow-up alternative in the global bifurcation result, see Theorem \ref{thm:global-bif-Beltrami}, the solutions themselves are smooth as long as the surface does not intersect with the bottom, that is, $\eta > -d$. In fact, we show that the solutions are analytic in Appendix \ref{app:analyticity}; see Corollary \ref{cor-app:analyticity}.
\end{remark}

\subsection{Related results}\label{sec:related-results}

Although steady water waves have been of much interest for over a century (see e.g.~\cite{haziot2022} for a recent review), rigorous results on three-dimensional water waves, especially with vorticity, have only been obtained recently. In the irrotational case, the first existence results on doubly periodic capillary-gravity water waves were obtained by \cite{reeder1981, sun1993} in the symmetric case, where the fundamental domain has the shape of a symmetric diamond, and by \cite{craig2000} in the asymmetric case. The former are based on the implicit function theorem and arguments in the spirit of Crandall--Rabinowitz, while the latter is based on a variational Lyapunov--Schmidt reduction and critical point theory. The recent preprint \cite{barbieri2026} improves upon the results in \cite{craig2000} by more fully exploiting the symmetries of the problem. Existence results have also been obtained using a spatial dynamics approach in \cite{groves2001,groves2003,nilsson2019}. We also refer to \cite{ahmad2024} for a result on doubly periodic hydroelastic waves. In the absence of surface tension, the existence of doubly periodic water waves is much more involved since it leads to a small divisor problem. Nevertheless, their existence has been shown in \cite{iooss2009} in the symmetric case and in \cite{iooss2011} in the asymmetric case. However, all results are restricted to local solutions close to the trivial state with a flat surface profile and, to the best of our knowledge, no global results exist even in the irrotational case.

In contrast to the irrotational case, the existence of three-dimensional water waves with vorticity is much less understood. The main challenge compared to the two-dimensional case, which has been studied, for example, in \cite{constantin2004,wahlen2024}, is that the existence problem cannot, in general, be formulated as an elliptic free-boundary problem. In fact, there are a number of non-existence results for rotational water waves, such as the non-existence of genuinely three-dimensional waves with constant vorticity in finite depth \cite{wahlen2014}, which even extends to the case of non-steady water waves \cite{martin2018,geyer2025}; see also \cite{martin2022} for a recent review. Notably, a family of trivially three-dimensional \emph{internal} waves with constant vorticity exists if the densities of the two fluids are the same; see \cite{chen2023}. Finally, we mention the recent result in \cite{seth2026}, which establishes that symmetric, doubly periodic capillary-gravity waves bifurcating from a non-uniform shear flow must be two-dimensional to leading order.

Nevertheless, the existence of three-dimensional rotational water waves has been established recently in two cases. The first case, also considered in this paper, deals with Beltrami flows, where the existence of local bifurcating branches of doubly periodic waves has been shown in \cite{lokharu2020}. A similar result has also been obtained recently for internal waves in Beltrami flows \cite{seth2024}. Both results are based on multi-parameter bifurcation theory, which allows the authors to prove the existence of bifurcating solutions from a two-dimensional kernel using an analytic Lyapunov--Schmidt reduction. A similar result was also obtained using a different formulation of the three-dimensional water wave problem with vorticity based on a generalised Dirichlet--to--Neumann operator \cite{groves2024}, which was first introduced in a variational formulation for steady water waves in Beltrami flows \cite{groves2020}. In this context, we also refer to \cite{lokharu2019} for an alternative variational formulation, which allows for overhanging surface profiles. The second case considers waves with small vorticity, where a local bifurcation result for symmetric diamond waves has been established in \cite{seth2024a}. The proof is based on a construction introduced by Lortz \cite{lortz1970} for magnetohydrostatic equilibria in reflection-symmetric toroidal domains.

In contrast to the inviscid problem, large \emph{viscous} surface waves have recently been constructed. The main difference from the inviscid theory is that the viscous problem requires a forcing term to allow non-trivial travelling surface waves. Therefore, large viscous surface waves have so far been constructed for given large-amplitude external forcing, which plays the role of a control parameter. We specifically point to \cite{nguyen2026-03Nonlinearity}, where large periodic capillary-gravity waves are constructed for Darcy flows for both finite and infinite depth. The proof in \cite{nguyen2026-03Nonlinearity} is based on a reformulation of the problem in the form `identity plus compact' and an application of a global implicit function theorem based on Leray--Schauder degree theory.  While the analysis relies heavily on surface tension, for two-dimensional waves, a similar global continuation result has recently been obtained without it \cite{nguyen2026-01preprint}. Finally, we also point out \cite{brownfield2024-10CommunMathPhys} where large, slowly-travelling waves have been obtained as a perturbation from an explicit family of large stationary solutions for Darcy flows. A similar analysis has also been performed for slowly-travelling surface waves in the free-boundary (Navier--)Stokes equations \cite{banihashemi2026-01preprint}. 

As discussed above, the main technical contribution of this paper is the global extension of the local results in \cite{lokharu2020}, and we provide a strategy to obtain global bifurcation results when the kernel at the bifurcation point is two-dimensional. Classical results in bifurcation theory, see, for example, \cite{rabinowitz1971,buffoni2003}, often rely on the zero eigenvalue at the bifurcation point being simple, which is often satisfied in applications (after exploiting symmetries of the solutions); see, for example, \cite{constantin2016,ehrnstrom2019,wahlen2024}. Nevertheless, results for the bifurcation of solutions from a non-simple kernel have been obtained using topological methods based on degree theory; see, for example, \cite{kielhofer2012} for a local bifurcation result. In the one-parameter case, we refer to the recent result \cite{lopez-gomez2024}, which also generalises analytic bifurcation theory \cite{buffoni2003} to situations where the zero eigenvalue at the bifurcation point has algebraic multiplicity larger than one. However, the results remain restricted to the case that the kernel is one-dimensional. In the multiparameter setting, there are a number of results; see, for example, \cite[Thm.~4.23]{antman2005}. These rely on the assumption that there are points in the parameter space at which the linearisations have different Leray--Schauder degrees. Under this assumption, each $C^1$-curve between these points contains a bifurcation point, and a set of non-trivial solutions with at least the dimension of the parameter space arises.

However, if an even number (counted with multiplicity) of eigenvalues crosses through zero, the Leray--Schauder degree of the linearisation typically does not change; see, for example, \cite[Chap.~II.3]{kielhofer2012}. Therefore, global bifurcation results for problems with a two-dimensional kernel are rare, and often numerical continuation from a local result is used; see, for example, \cite{aston1991,aston1993,akers2019} for studies in the context of mode interactions for two-dimensional gravity-capillary waves. A notable exception is \cite{walsh2014}, where analytic global bifurcation theory is used to establish a global continuation of steady stratified periodic gravity water waves with surface tension in a case where a two-dimensional kernel occurs. In fact, \cite{walsh2014} exploits a similar observation to that in this paper that it is sufficient to check that the linearisation about the local bifurcation is an isomorphism.

\subsection{Outline}

The paper is organised as follows. In Section \ref{sec:reformulation}, we present the reformulation of the free-boundary problem \eqref{eq:formulation} into a fixed point problem of the form `identity plus compact' on a fixed domain. In Section \ref{sec:local-bifurcation}, we prove a local bifurcation result using the reformulation, which matches the result in \cite{lokharu2020}. For this, we use an abstract multi-parameter bifurcation theorem from \cite{seth2024}, which we recapitulate in Appendix \ref{sec:abstract local bif}.  Here, we also provide a reformulation of the bifurcation problem using the parameterisation of the local bifurcation branch along a selected direction as a new bifurcation parameter. In Section \ref{sec:global-bif}, we then continue the local bifurcation branch to a global branch of solutions using a variant of analytic global bifurcation theory provided in Appendix \ref{app:globalBifurcation} and provide a sharper analysis of the blow-up alternative using a low-order Schauder estimate obtained in Appendix \ref{sec:Schauder estimates}. We conclude the main part of the paper by discussing related open problems in Section \ref{sec:discussion}. In Appendix \ref{sec-app:technical-result}, we then collect some technical results used to obtain uniqueness of the div-curl problem with integral constraint \eqref{eq:div-curl-full} in Section \ref{sec:reformulation}. Finally, Appendix \ref{app:analyticity} contains a proof that solutions are in fact locally analytic as long as the surface does not intersect with the bottom.

\section{Reformulation of the problem}\label{sec:reformulation}

The goal of this section is to rewrite the problem \eqref{eq:formulation} in a form which is amenable to global bifurcation analysis. This is achieved by reformulating it as a system of the form `identity plus compact'. Our strategy is to split the problem \eqref{eq:formulation} into the div-curl problem \eqref{eq:curl}--\eqref{eq:integralCond} to determine the velocity field $\bm{u}$ and the dynamic boundary condition \eqref{eq:dynBoundary} to determine the surface profile $\eta$. We then first analyse the div-curl problem individually for a given surface profile; see Section \ref{sec:div-curl-problem}. To make the influence of the surface profile more explicit, we transform the system \eqref{eq:formulation} into a system on a fixed domain $\Omega^0 = \{(\bm{x}',z) \in \R^2 \times \R : -d < z < 0\}$ and establish elliptic regularity estimates in Section \ref{sec:flattening-trafo}. Afterwards, we turn to the dynamic boundary condition in Section \ref{sec:dynamic-boundary-condition} and establish the existence and regularity of a surface profile for a given velocity field. Finally, we combine the results to obtain a reformulation of the full problem as a fixed point problem of the form `identity plus compact' in Section \ref{sec:final-reformulation}.

\subsection{The div-curl problem}\label{sec:div-curl-problem}

Assume that the surface profile $\eta$ satisfies $\eta(x,y) > -d$ for all $(x,y) \in \R^2$ and belongs to the H\"older space $C^{k+2,\gamma}_\mathrm{per}(\R^2)$ for some integer $k\ge 0$ and $\gamma \in (0,1)$, where `per' denotes periodicity with respect to the lattice $\Lambda$; see \eqref{eq:lattice}. We write this as $\eta \in \curlB^{k+2,\gamma}$, where
\[
\curlB^{k, \gamma}=\{  \eta \in C^{k, \gamma}_\mathrm{per}(\R^2) : \min \eta>-d\}.
\]
We first consider \eqref{eq:formulation} without the dynamic boundary condition \eqref{eq:dynBoundary} and write it as
\begin{subequations}
\label{eq:div-curl-full}
    \begin{alignat}{2}
        \nabla \times \bm{u} &= \bm{w}  \quad &&\text{in } \Omega^\eta,  \label{eq:div-curl-full curl}\\
        \nabla \cdot \bm{u} &= 0 &&\text{in } \Omega^\eta,  \label{eq:div-curl-full eq} \\
        \bm{u} \cdot \bm{n} &= 0 &&\text{on } \partial\Omega^\eta,  \label{eq:div-curl-full kinboundary}\\
        \int_{\Omega_{00}^\eta} u_j \,\de V &= M_j\quad &&\text{for } j = 1,2,  \label{eq:div-curl full integralCond}
    \end{alignat}
\end{subequations}
where we have replaced $\alpha \bm{u}$ in the right-hand side by a general vector field $\bm{w} \in C^{k,\gamma}_\mathrm{per, div, T}(\overline{\Omega^\eta}; \R^3)$, in which
\[
 C^{k,\gamma}_\mathrm{per, div, T}(\overline{\Omega^\eta}; \R^3)\coloneqq \{ \bm{w}\in C_\mathrm{per}^{k, \gamma}(\overline{\Omega^\eta}; \R^3) : \nabla \cdot \bm{w}=0 \text{ in } \Omega^\eta, \, \bm{w}\cdot \bm{n}=0 \text{ on } \partial \Omega^\eta\}.
\]
Note that we will sometimes leave out the domain and/or codomain from the function spaces when they are clear from the context, and that the condition $\nabla \cdot \bm{w}=0$ is interpreted in a weak sense when $k=0$.
We begin by constructing a solution to \eqref{eq:div-curl-full curl}--\eqref{eq:div-curl-full kinboundary} while ignoring the integral conditions. For this, we use the periodic Biot--Savart potential of $\bm{w}$, 
\[
\mathrm{BS}(\bm{w})(\bm{x}) =\mathrm{p.v.}  \dfrac{1}{4\pi} \int_{\Omega^\eta} \bm{w}(\bm{y}) \times \dfrac{\bm{x} - \bm{y}}{\snorm{\bm{x}-\bm{y}}^3} \,d\bm{y} \in C^{k+1,\gamma}_\mathrm{per},
\]
which is divergence-free and satisfies
\[
    \nabla \times \mathrm{BS}(\bm{w}) = \bm{w}.
\]
Indeed, the regularity of $\mathrm{BS}(\bm{w})$ can, for example, be proved by a straightforward modification of \cite[Lemma 2.2]{lokharu2019}. We can therefore solve \eqref{eq:div-curl-full curl}--\eqref{eq:div-curl-full kinboundary} by adding a correction $\nabla \phi$ where
\begin{alignat*}{2}
        \Delta \phi &= 0 \quad &&\text{in } \Omega^\eta, \\
        \partial_n \phi &= -\mathrm{BS}(\bm{w}) \cdot \bm{n}  \quad &&\text{on } \partial\Omega^\eta
\end{alignat*}
to satisfy the boundary condition \eqref{eq:div-curl-full kinboundary}.
Since $\int_{\partial\Omega_{00}^\eta} \mathrm{BS}(\bm{w}) \cdot \bm{n}  \, \de S = 0$ by the divergence theorem, this problem has a solution which is unique up to a constant, and the solution is in $C^{k+2,\gamma}_\mathrm{per}$.
We thus obtain a solution to the original problem, including the integral condition \eqref{eq:div-curl full integralCond}, of the form $\bm{u}=\mathrm{BS}(\bm{w}) +\nabla \phi +\bm{v}$, if $\bm{v}$ solves the homogeneous 
problem
\begin{alignat*}{2}
        \nabla \times \bm{v} &= 0 \quad &&\text{in } \Omega^\eta, \\
        \nabla \cdot \bm{v} &= 0 &&\text{in } \Omega^\eta, \\
        \bm{v} \cdot \bm{n} &= 0 &&\text{on } \partial\Omega^\eta, \\
        \int_{\Omega_{00}^\eta} v_j \,\de V &= \widehat{M}_j \quad &&\text{for } j = 1,2,
\end{alignat*}
with $\widehat{M}_j = M_j-\int_{\Omega_{00}^\eta} (\mathrm{BS}(\bm{w})_j-\partial_j \phi) \,\de V$.
In the appendix, Proposition \ref{prop:div-curl-isomorphism}, it is proved that this problem indeed has a unique solution in $C^{k+1,\gamma}_\mathrm{per}$. Hence, we have proven the following result.

\begin{theorem}\label{thm:div-curl-physical-domain}
    Let $k \geq 0$. Then there exists an operator 
    \begin{equation*}
        \curlL \colon C^{k,\gamma}_\mathrm{per, div, T}(\overline{\Omega^\eta};\R^3) \times \curlB^{k+2, \gamma}\times \R^2 \to  C^{k+1,\gamma}_\mathrm{per, div, T}(\overline{\Omega^\eta};\R^3),
    \end{equation*}
    which maps 
    $(\bm{w}, \eta, \bm{M})$
    to the unique solution of \eqref{eq:div-curl-full}.
\end{theorem}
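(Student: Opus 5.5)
The plan is to build the solution explicitly as $\bm{u} = \mathrm{BS}(\bm{w}) + \nabla\phi + \bm{v}$, following the decomposition described just before the statement, and then to prove uniqueness separately.

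\emph{Existence.} Fix $(\bm{w},\eta,\bm{M})$ with $\bm{w}\in C^{k,\gamma}_\mathrm{per,div,T}$ and $\eta\in\curlB^{k+2,\gamma}$, so that $\partial\Omega^\eta$ is a $C^{k+2,\gamma}$ boundary. First set $\bm{u}_1=\mathrm{BS}(\bm{w})\in C^{k+1,\gamma}_\mathrm{per}$. It is divergence-free, and $\nabla\times\bm{u}_1=\bm{w}$. Checking this identity is where $\nabla\cdot\bm{w}=0$ and $\bm{w}\cdot\bm{n}=0$ enter: they kill the boundary and divergence terms in $\nabla\times\nabla\times$ of the vector potential. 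For $k=0$ this has to be done in the weak sense.

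Next, solve the periodic Neumann problem $\Delta\phi=0$, $\partial_n\phi=-\bm{u}_1\cdot\bm{n}$ on the cell. Periodicity removes the lateral boundary contributions, so the compatibility condition is just the divergence theorem applied to $\nabla\cdot\bm{u}_1=0$. Schauder theory for the Neumann problem on a $C^{k+2,\gamma}$ domain with $C^{k+1,\gamma}$ data gives $\phi\in C^{k+2,\gamma}_\mathrm{per}$. Hence $\bm{u}_2=\bm{u}_1+\nabla\phi\in C^{k+1,\gamma}_\mathrm{per}$ satisfies \eqref{eq:div-curl-full curl}--\eqref{eq:div-curl-full kinboundary}.

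Finally, let $\bm{v}$ be the harmonic field from Proposition \ref{prop:div-curl-isomorphism} with prescribed means $\widehat{M}_j=M_j-\int_{\Omega^\eta_{00}}(\bm{u}_2)_j\,\de V$. It is curl-free, divergence-free and tangential, so $\bm{u}=\bm{u}_2+\bm{v}$ satisfies all of \eqref{eq:div-curl-full}. Because $\nabla\times\bm{u}=\bm{w}$ has regularity $C^{k,\gamma}$ and $\bm{u}$ is tangential and divergence-free, $\bm{u}$ lies in $C^{k+1,\gamma}_\mathrm{per,div,T}$.

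\emph{Uniqueness.} Let $\bm{u},\tilde{\bm{u}}$ be two solutions and put $\bm{d}=\bm{u}-\tilde{\bm{u}}$. Then $\bm{d}$ is curl-free, divergence-free and tangential, with $\int_{\Omega^\eta_{00}} d_j\,\de V=0$ for $j=1,2$. The uniqueness part of Proposition \ref{prop:div-curl-isomorphism} then gives $\bm{d}=0$.

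The underlying reason is that periodic harmonic tangential fields on the slab form exactly a two-dimensional space, isomorphic to $\R^2$ via the horizontal means. On the torus-times-interval topology, a curl-free field is $\nabla\psi+a_1\bm{e}_1+a_2\bm{e}_2$ with $\psi$ periodic, and the tangency and divergence conditions fix $\psi$ in terms of $\bm{a}$. The third mean is not prescribed because $\int d_3\,\de V=0$ holds automatically: one can write $d_3=\nabla\cdot(z\bm{d})$ and use tangency at $z=-d$ and $z=\eta$.

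\emph{Main obstacle.} I expect the hardest step to be the regularity and curl identity of the periodic Biot--Savart operator in Hölder spaces, especially at low regularity $k=0$ up to the curved boundary, where the weak divergence condition must be used. Following the text, I would first try the adaptation of \cite[Lemma 2.2]{lokharu2019}: extend $\bm{w}$ suitably and use the periodic Green's function as a Newtonian kernel plus a smooth part, then apply classical Calderón--Zygmund Hölder estimates. The well-definedness of $\curlL$ as a map then follows from existence and uniqueness combined.
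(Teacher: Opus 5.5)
Your proposal is correct and follows the paper's route: the periodic Biot--Savart potential (with its Hölder regularity deferred, as in the paper, to an adaptation of \cite[Lemma 2.2]{lokharu2019}), a Neumann gradient correction to enforce tangency, and Proposition \ref{prop:div-curl-isomorphism} used both to fix the two horizontal means and to obtain uniqueness. Your prescribed means $\widehat{M}_j = M_j - \int_{\Omega^\eta_{00}} (\bm{u}_2)_j \,\de V$ also carry the correct sign on the $\nabla\phi$ contribution; the paper's displayed formula, which has $-\partial_j\phi$, gets this sign wrong.
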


\subsection{Flattening transformation and elliptic regularity estimates}\label{sec:flattening-trafo}

To capture the dependence of the solution $\bm{u}$ on the surface $\eta$, it is useful to introduce the sets
\[
\curlB^{k, \gamma}_\delta=\{  \eta \in C^{k, \gamma}_\mathrm{per}(\R^2) : \min \eta>-d+\delta\}, \quad 0<\delta< d, 
\]
and
\[
\curlB^{k, \gamma}_{\delta, R}=\{  \eta \in \curlB_{\delta}^{k, \gamma} : \|\eta\|_{C^{k, \gamma}} <R\}, \quad 0<\delta <d, R>0.
\]
For the most part, the precise dependence on $\delta$ and $R$ is not important, and we will simply work with $\curlB^{k, \gamma}_{R^{-1}, R}$ for $R > \tfrac{1}{d}$ to reduce the number of parameters.
The strategy to study how $\bm{u}$ depends on $\eta$ is to 
transform the problem onto a fixed domain. For this we follow the treatment in \cite{lokharu2020} and introduce the flattening transformation
\begin{equation}\label{eq:flattening-trafo}
    \frakF \colon \overline{\Omega^0} \to \overline{\Omega^\eta}, \quad (\bar{x},\bar{y},\bar{z}) \mapsto \left(\bar{x}, \bar{y}, \bar{z} + \eta(\bar{x},\bar{y}) \left(\dfrac{\bar{z}}{d} + 1\right)\right).
\end{equation}
Then, following for example \cite{seth2024a}, for a vector field $\bm{u} \in \Holderspace{k}(\overline{\Omega^\eta};\R^3)$ we define the corresponding flattened vector field $\bar{\bm{u}} \in \Holderspace{k}(\overline{\Omega^0};\R^3)$ by
\[
    \bar{\bm{u}} = \frakG \bm{u} \eqqcolon \det(D\frakF) D\frakF^{-1} \bm{u} \circ \frakF,
\]
where $D\frakF$ denotes the Fréchet derivative of $\frakF$ and $D\frakF^{-1}$ the pointwise inverse. Note that $D\frakF(x,y,z)$ is invertible at $(x,y,z) \in \Omega^0$ if $\eta(x,y) > -d$. Additionally, since $D\frakF$ contains at most first derivatives of $\eta$, the following lemma holds.

\begin{lemma}\label{lem:bound-flattening-trafo}
    Let $\eta \in \curlB^{k+1, \gamma}_{R^{-1}, R}$. Then, the operator $\frakG$ is a linear and continuous map from $C^{k,\gamma}_\mathrm{per, div, T}(\overline{\Omega^\eta};\R^3)$ to $C^{k,\gamma}_\mathrm{per, div, T}(\overline{\Omega^0};\R^3)$ with
    \[
        \norm[\Holderspace{k}(\overline{\Omega^0};\R^3)]{\frakG \bm{u}} \leq C(R) \norm[\Holderspace{k}(\overline{\Omega^\eta}; \R^3)]{u},
    \]
    where $C(R) < \infty$ is a constant only depending on $R$.
    Additionally, $\frakG$ is invertible with a bounded inverse satisfying
    \[
        \norm[\Holderspace{k}(\overline{\Omega^\eta};\R^3)]{\frakG^{-1} \bar{\bm{u}}} \leq C(R) \norm[\Holderspace{k}(\overline{\Omega^0}; \R^3)]{\bar{\bm{u}}}.
    \]
\end{lemma}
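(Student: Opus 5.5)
The plan is to compute $D\frakF$ explicitly and read off every claim from that formula. Set $\theta(\bar z) = \bar z/d + 1 \in [0,1]$. Then
\[
D\frakF = \begin{pmatrix} 1 & 0 & 0 \\ 0 & 1 & 0 \\ \theta\,\partial_x\eta & \theta\,\partial_y\eta & 1+\eta/d \end{pmatrix},
\qquad \det D\frakF = 1 + \frac{\eta}{d}.
\]
The determinant depends only on $\eta$, not on its derivatives. The matrix $\det(D\frakF)\,D\frakF^{-1}$ is the cofactor matrix. Its entries are polynomials in $\eta$, $\nabla\eta$ and $\theta$, and there is no division, because $(1+\eta/d)$ cancels against $\det D\frakF$. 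Concretely, the first two components of $\bar{\bm u}$ are $(1+\eta/d)\,u_j\circ\frakF$, and the third is $u_3\circ\frakF - \theta\,\nabla\eta\cdot \bm u'\circ\frakF$.

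For the norm bound, these entries lie in $C^{k,\gamma}_\mathrm{per}(\overline{\Omega^0})$ with norm bounded by $C(R)$, since $\eta\in C^{k+1,\gamma}$ with $\|\eta\|_{C^{k+1,\gamma}}<R$. Next I show $\|\bm u\circ\frakF\|_{C^{k,\gamma}}\le C(R)\|\bm u\|_{C^{k,\gamma}}$. This is the chain rule (Faà di Bruno) for the $C^{k+1,\gamma}$ diffeomorphism $\frakF$, together with the Lipschitz bound on $\frakF$ for the Hölder seminorm. Combining the two with the algebra property of Hölder spaces gives the first estimate. Periodicity is preserved because $\frakF$ commutes with lattice translations in $\bar{\bm x}'$.

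The inverse is the other main step. It is given by the explicit formula $\frakG^{-1}\bar{\bm u} = \det(D\frakF)^{-1} D\frakF\,\bar{\bm u}\circ\frakF^{-1}$. Here $\det(D\frakF)^{-1} = d/(d+\eta)$, and the lower bound $\eta > -d+R^{-1}$ gives $d+\eta\ge R^{-1}$, so this factor lies in $C^{k+1,\gamma}$ with bound $C(R)$. The map $\frakF^{-1}(x,y,z) = (x,y,(z-\eta)\,d/(d+\eta))$ is also explicit. It is $C^{k+1,\gamma}$ with bounds depending only on $R$, so the same composition and product estimates apply.

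It remains to check that $\frakG$ maps $C^{k,\gamma}_\mathrm{per,div,T}$ to itself. This uses the Piola identity: $\nabla\cdot\bar{\bm u} = \det(D\frakF)\,(\nabla\cdot\bm u)\circ\frakF$ for smooth $\bm u$. For $k=0$ I would argue weakly via the change of variables $\int_{\Omega^\eta}\bm u\cdot\nabla\varphi = \int_{\Omega^0}\bar{\bm u}\cdot\nabla(\varphi\circ\frakF)$, which also shows that weak divergence-freeness together with the tangency condition (tested against periodic $\varphi$ that do not vanish on the boundary) is equivalent on both sides. The boundary condition transforms because $\bar{\bm u}\cdot\bm e_3$ on $\bar z=0$ equals $u_3-\nabla\eta\cdot\bm u'$, a multiple of $\bm u\cdot\bm n$ on $z=\eta$; on $\bar z=-d$ we have $\theta=0$, so $\bar u_3 = u_3$.

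I expect the main obstacle to be bookkeeping rather than substance. The composition estimate has to be done with constants depending only on $R$, which means controlling the Lipschitz constants of $\frakF$ and $\frakF^{-1}$ via $\|\nabla\eta\|_\infty$ and $\delta=R^{-1}$. The weak formulation at $k=0$ also needs care.
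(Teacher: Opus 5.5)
Your proposal is correct and follows the same route as the paper. The paper justifies the lemma only by remarking that $D\frakF$ contains at most first derivatives of $\eta$; your explicit formulas for the cofactor matrix $\det(D\frakF)\,D\frakF^{-1}$ and for $\frakF^{-1}$, the composition and product estimates, and the use of $d+\eta > R^{-1}$ for the inverse are exactly the details behind that remark, and you go slightly beyond the paper by using the Piola identity and its weak change-of-variables form to check that the divergence-free and tangency conditions are preserved in both directions, which the paper leaves implicit.
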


Lemma \ref{lem:bound-flattening-trafo} shows that if $\bm{u}$ solves \eqref{eq:div-curl-full}, then the flattened function $\bar{\bm{u}} = \frakG \bm{u}$ is a solution to the flattened system
\begin{subequations}
    \label{eq:formulation-flat}
    \begin{alignat}{2}
            \nabla \times (A(\eta)\bar{\bm{u}}) &=\bar{\bm{w}} \quad &&\text{in } \Omega^0, \label{eq:curl-flat}\\
            \nabla \cdot \bar{\bm{u}} &= 0 &&\text{in } \Omega^0, \label{eq:div-flat}\\
            \bar{\bm{u}} \cdot \bm{n} &= 0 &&\text{on } \partial \Omega^0, \label{eq:kinboundary-flat} \\
            \int_{\Omega_{00}^0} \bar{u}_j \,\de V  &= M_j \quad &&\text{for } j = 1,2. \label{eq:integralCond-flat} 
    \end{alignat}
\end{subequations}
Here, $A(\eta) \in \R^{3\times 3}$ is given by
\begin{equation}\label{eq:Aeta}
        A(\eta) = \dfrac{1}{\det(D\frakF)}D\frakF^T D\frakF.
\end{equation}

In Appendix \ref{sec:Schauder estimates}, we prove Schauder estimates for \eqref{eq:formulation-flat} which are uniform in $\eta \in \curlB^{k+2,\gamma}_{\delta,R}$; see Theorem \ref{thm-app:schauder}.
Combining this with the existence theory for \eqref{eq:div-curl-full}, we obtain the following result.

\begin{corollary}\label{cor:schauder}
    Let $k \geq 0$ and $\eta \in \curlB^{k+2,\gamma}_{R^{-1}, R}$. Additionally, let $\bar{\bm{w}} \in C^{k,\gamma}_\mathrm{per, div, T}(\overline{\Omega^0}; \R^3)$. Then, there exists a unique solution $\bar{\bm{u}} \in C^{k+1,\gamma}_\mathrm{per, div, T}(\overline{\Omega^0}; \R^3)$ to \eqref{eq:formulation-flat}, which satisfies
    \begin{equation}\label{eq:schauder-estimate}
        \Holdernormper{\bar{\bm{u}}}{k+1}{\gamma} \leq C(R) (\|\bar{\bm{u}}\|_{C^0_\mathrm{per}} + \Holdernormper{\bar{\bm{w}}}{k}{\gamma}).
    \end{equation}
\end{corollary}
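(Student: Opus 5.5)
The plan is to transfer the existence and uniqueness from the physical domain via $\frakG$, and to take the estimate from the Schauder theory in the appendix.

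\emph{Step 1: Existence.} Given $\bar{\bm{w}} \in C^{k,\gamma}_\mathrm{per, div, T}(\overline{\Omega^0};\R^3)$, set $\bm{w} = \frakG^{-1}\bar{\bm{w}}$. One has to check that $\frakG^{-1}$ maps divergence-free, tangential fields to divergence-free, tangential fields. This is the Piola identity: $\det(D\frakF)D\frakF^{-1}$ is exactly the Piola transform, so $\nabla\cdot(\frakG\bm{u}) = \det(D\frakF)\,(\nabla\cdot\bm{u})\circ\frakF$. Tangency is preserved because $\frakF$ maps $\partial\Omega^0$ onto $\partial\Omega^\eta$, and the Piola transform maps normal components to normal components scaled by the area factor. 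By Lemma \ref{lem:bound-flattening-trafo} (valid since $\eta\in\curlB^{k+2,\gamma}\subset\curlB^{k+1,\gamma}$), we get $\bm{w}\in C^{k,\gamma}_\mathrm{per,div,T}(\overline{\Omega^\eta})$. Theorem \ref{thm:div-curl-physical-domain} then yields $\bm{u}=\curlL(\bm{w},\eta,\bm{M})\in C^{k+1,\gamma}$, and we set $\bar{\bm{u}}=\frakG\bm{u}$. Applying the lemma again with $k+1$, which needs $\eta\in C^{k+2,\gamma}$ and is exactly our hypothesis, gives $\bar{\bm{u}}\in C^{k+1,\gamma}_\mathrm{per,div,T}$.

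We then verify that $\bar{\bm{u}}$ solves \eqref{eq:formulation-flat}. The curl equation follows from the standard transformation rule for curl under pullback of 1-forms versus 2-forms. Indeed, $A(\eta)\bar{\bm{u}} = D\frakF^T(\bm{u}\circ\frakF)$ is the covariant pullback, and $\nabla\times(D\frakF^T \bm{u}\circ\frakF) = \det(D\frakF)D\frakF^{-1}(\nabla\times\bm{u})\circ\frakF = \frakG(\nabla\times \bm{u})$. This gives $\nabla\times(A\bar{\bm{u}})=\frakG\bm{w}=\bar{\bm{w}}$. For the integral constraint, note that the horizontal rows of $\det(D\frakF)D\frakF^{-1}$ acting on $\bm{u}$ give $\bar u_j = \det(D\frakF)\,u_j\circ\frakF$ for $j=1,2$, since $\frakF$ leaves $x,y$ unchanged. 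Hence $\int_{\Omega^0_{00}}\bar u_j = \int_{\Omega^\eta_{00}}u_j = M_j$ by change of variables.

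\emph{Step 2: Uniqueness.} Uniqueness follows by reversing the argument. If $\bar{\bm{u}}_1,\bar{\bm{u}}_2$ both solve \eqref{eq:formulation-flat}, then $\frakG^{-1}\bar{\bm{u}}_i$ both solve \eqref{eq:div-curl-full} with the same data, so they coincide by Theorem \ref{thm:div-curl-physical-domain}. Since $\frakG$ is invertible, $\bar{\bm{u}}_1=\bar{\bm{u}}_2$.

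\emph{Step 3: Estimate.} The estimate \eqref{eq:schauder-estimate} is obtained by applying Theorem \ref{thm-app:schauder} to the solution constructed above. That theorem gives the Schauder estimate for \eqref{eq:formulation-flat} uniformly over $\eta\in\curlB^{k+2,\gamma}_{\delta,R}$, and we take $\delta=R^{-1}$. The constant depends only on $R$ because the coefficients $A(\eta)$ lie in $C^{k+1,\gamma}$ with norm controlled by $R$. They are also uniformly elliptic, since $\det D\frakF = 1+\eta/d \ge \delta/d = (Rd)^{-1}$.

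The main obstacle is the verification of the curl transformation identity together with the preservation of the divergence-free and tangential conditions in the weak sense when $k=0$. Here $\bar{\bm{w}}$ is only $C^{0,\gamma}$, so the divergence condition must be interpreted weakly. I would handle this by noting that the Piola identity holds in the distributional sense, tested against $\phi\circ\frakF^{-1}$, which is legitimate because $\frakF$ is a $C^{2,\gamma}$ diffeomorphism. Everything else is bookkeeping.
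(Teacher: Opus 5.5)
Your proposal is correct and follows the paper's own route. You pull $\bar{\bm{w}}$ back with $\frakG^{-1}$, solve in the physical domain via Theorem \ref{thm:div-curl-physical-domain}, push forward with $\frakG$ (Lemma \ref{lem:bound-flattening-trafo}), get uniqueness from the same correspondence, and take the estimate from Theorem \ref{thm-app:schauder} with $\delta=R^{-1}$. The only addition is that you verify explicitly the identities $A(\eta)\bar{\bm{u}}=D\frakF^T(\bm{u}\circ\frakF)$, the curl/Piola transformation rule and the change of variables for the integral constraint, which the paper covers by the \emph{equivalence} of \eqref{eq:div-curl-full} and \eqref{eq:formulation-flat}.
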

\begin{proof}
    We recall from Theorem \ref{thm:div-curl-physical-domain} that there exists a unique solution to the div-curl problem in the original variables \eqref{eq:div-curl-full} with right-hand side given by $\frakG^{-1} \bar{\bm{w}} \in C^{k,\gamma}_{\mathrm{per, div, T}}$, where we also use the mapping properties of the flattening transform provided in Lemma \ref{lem:bound-flattening-trafo}. This provides a unique solution to the flattened system with the desired regularity by applying the flattening transformation. Using the equivalence of \eqref{eq:div-curl-full} and \eqref{eq:formulation-flat}, this establishes the existence and uniqueness of a solution with desired regularity. The Schauder estimate \eqref{eq:schauder-estimate} then follows from Theorem \ref{thm-app:schauder}.
\end{proof}

It turns out that the lower order contribution $\|\bar{\bm{u}}\|_{C^0_\mathrm{per}}$ can be removed, using that the solution of the physical div-curl problem \eqref{eq:div-curl-full} together with the boundary and integral conditions is unique in $\Holderspace{k}$ for any $\gamma \in (0,1)$; see Theorem \ref{thm:div-curl-physical-domain}. We point out that this also implies that the corresponding flattened problem is uniquely solvable due to the invertibility of the flattening transform; see Lemma \ref{lem:bound-flattening-trafo}.

\begin{proposition}\label{prop:schauder-est-no-lower-order}
    Let $k \geq 0$, $\bm{M} \in \R^2$, $\bar{\bm{w}} \in C^{k, \gamma}_\mathrm{per, div, T}(\overline{\Omega^0}; \R^3)$  and $\eta\in \curlB^{k+2, \gamma}_{R^{-1}, R}$.
    Additionally, let $\bar{\bm{u}}$ be the solution to \eqref{eq:formulation-flat}. Then, $\bar{\bm{u}}$ satisfies the inequality
    \begin{equation}\label{eq:schauder-without-lower-order-term}
        \Holdernormper{\bar{\bm{u}}}{k+1}{\gamma} \leq C(R) (|\bm{M}|+ \Holdernormper{\bar{\bm{w}}}{k}{\gamma}).
    \end{equation}
\end{proposition}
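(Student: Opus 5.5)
The plan is to remove the lower-order term $\|\bar{\bm{u}}\|_{C^0_\mathrm{per}}$ in \eqref{eq:schauder-estimate} by a standard compactness--uniqueness (contradiction) argument. By Corollary \ref{cor:schauder} it suffices to show
\[
\|\bar{\bm{u}}\|_{C^0_\mathrm{per}} \leq C(R)\,(|\bm{M}| + \Holdernormper{\bar{\bm{w}}}{k}{\gamma}),
\]
and by linearity in $(\bar{\bm{w}},\bm{M})$ for fixed $\eta$ we may normalise. Since higher norms of $\bar{\bm{w}}$ and $\eta$ dominate lower ones, it is enough to prove this $C^0$ bound uniformly over $\eta \in \curlB^{k+2,\gamma}_{R^{-1},R}$. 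The case $k\ge 1$ then follows from the case $k=0$ combined with \eqref{eq:schauder-estimate}.

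Suppose the bound fails. Then there exist $\eta_n \in \curlB^{k+2,\gamma}_{R^{-1},R}$, $\bar{\bm{w}}_n$, $\bm{M}_n$ and solutions $\bar{\bm{u}}_n$ with $\|\bar{\bm{u}}_n\|_{C^0}=1$ and $|\bm{M}_n| + \Holdernormper{\bar{\bm{w}}_n}{k}{\gamma} \to 0$. By \eqref{eq:schauder-estimate}, $\bar{\bm{u}}_n$ is bounded in $C^{k+1,\gamma}_\mathrm{per}$, and $\eta_n$ is bounded in $C^{k+2,\gamma}_\mathrm{per}$. Periodicity reduces everything to the compact cell, so Arzel\`a--Ascoli gives subsequences with $\bar{\bm{u}}_n \to \bar{\bm{u}}$ in $C^{k+1,\gamma'}$ and $\eta_n \to \eta$ in $C^{k+2,\gamma'}$ for any $\gamma'<\gamma$. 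The limit $\eta$ still satisfies $\min \eta \ge -d + R^{-1} > -d$ and lies in $C^{k+2,\gamma}$ with norm at most $R$, by lower semicontinuity of Hölder seminorms. Since $A(\eta_n)$ involves only first derivatives of $\eta_n$ and depends smoothly on them while $\det D\frakF$ stays bounded away from zero, we get $A(\eta_n)\to A(\eta)$ in $C^{k+1,\gamma'}$. Passing to the limit in \eqref{eq:formulation-flat}, $\bar{\bm{u}}$ solves the homogeneous flattened problem with $\bar{\bm{w}}=0$ and $\bm{M}=0$, and satisfies $\|\bar{\bm{u}}\|_{C^0}=1$.

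To reach a contradiction I would transform back with $\frakG^{-1}$ (Lemma \ref{lem:bound-flattening-trafo}). This gives a $C^{1,\gamma'}$ solution of the physical problem \eqref{eq:div-curl-full} on $\Omega^\eta$ with $\bm{w}=0$ and $\bm{M}=0$. By the uniqueness in Theorem \ref{thm:div-curl-physical-domain}, applied with $\gamma'$ in place of $\gamma$, this solution must vanish. Hence $\bar{\bm{u}}=0$, contradicting $\|\bar{\bm{u}}\|_{C^0}=1$.

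The main obstacle is making sure the limit problem falls within the uniqueness class. Two points need care. First, $\eta$ is only known to lie in $C^{k+2,\gamma}$ (or merely $C^{k+2,\gamma'}$ if one argues by convergence), so I would use that Theorem \ref{thm:div-curl-physical-domain} and Proposition \ref{prop:div-curl-isomorphism} hold for every $\gamma\in(0,1)$; this is exactly the remark preceding the proposition. Second, the integral constraint and the constraint $\bar{\bm{w}}_n \in C_{\mathrm{per,div,T}}$ must pass to the limit. Both are closed conditions under uniform convergence, and the integral over the fixed cell $\Omega^0_{00}$ passes to the limit directly; this is an advantage of working in the flattened variables.
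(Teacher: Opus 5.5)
Your proof is correct and uses the same compactness--uniqueness contradiction argument that the paper invokes, without details, for the case $\bm{M}=0$; like the paper, it rests on the uniqueness in Theorem \ref{thm:div-curl-physical-domain} and Proposition \ref{prop:div-curl-isomorphism}. The only difference is the treatment of $\bm{M}$: you let $\bm{M}_n\to 0$ inside the contradiction argument, whereas the paper handles $\bm{M}\neq 0$ by subtracting the constant horizontal field $\sum_j M_j\bm{e}_j/|\Omega_{00}^0|$ and moving $\sum_j M_j\nabla\times(A(\eta)\bm{e}_j)/|\Omega_{00}^0|$ into the right-hand side. Your version avoids that reduction and also spells out the compactness in $\eta_n$ needed for a constant depending only on $R$.
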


\begin{proof}
    If $M_1 = M_2 = 0$, the desired estimate follows from a standard contradiction argument using the uniqueness of solutions to the flattened div-curl problem \eqref{eq:curl-flat}--\eqref{eq:integralCond-flat}; see Corollary \ref{cor:schauder}. In the case $(M_1,M_2) \neq (0,0)$ we introduce
    \[
        \tilde{\bar{\bm{u}}} = \bar{\bm{u}} + \dfrac{M_1}{|\Omega_{00}^0|} \bm{e}_1  + \dfrac{M_2}{|\Omega_{00}^0|} \bm{e}_2.
    \]
    Then, $\tilde{\bar{\bm{u}}}$ satisfies
    \[
        \begin{split}
            \curl (A(\eta) \tilde{\bar{\bm{u}}}) &= \bar{\bm{w}} + \dfrac{M_1}{|\Omega_{00}^0|} \curl (A(\eta) \bm{e}_1) + \dfrac{M_2}{|\Omega_{00}^0|} \curl (A(\eta) \bm{e}_2), \\
            \nabla \cdot \tilde{\bar{\bm{u}}} &= 0
        \end{split}
    \]
    in $\Omega^0$ and
    \[
        \tilde{\bar{\bm{u}}} \cdot \bm{n} = \bar{\bm{u}} \cdot \bm{n} = 0
    \]
    on the boundary $\partial \Omega^0$ using that $\bm{n} = (0,0,1)^T$ is the normal direction on the flattened boundary. Finally, the integral condition
    \[
        \int_{\Omega_{00}^0} \bm{e}_j \cdot \tilde{\bar{\bm{u}}} \,\de V  = \int_{\Omega_{00}^0} \bm{e}_j \cdot \bar{\bm{u}} \,\de V  - M_j = 0
    \]
    holds for $j = 1,2$. Therefore, we can apply \eqref{eq:schauder-without-lower-order-term} in the case $M_1 = M_2 = 0$ to $\tilde{\bar{\bm{u}}} \in \Holderspace{k+1}(\Omega^0)$, $\eta \in \Holderspace{k+2}(\R^2)$ and 
    \[
        \tilde{\bar{\bm{w}}} \coloneqq \bar{\bm{w}} + \dfrac{M_1}{|\Omega_{00}^0|} \curl (A(\eta) \bm{e}_1) + \dfrac{M_2}{|\Omega_{00}^0|} \curl (A(\eta) \bm{e}_2) \in \Holderspace{k}(\Omega^0)
    \]
    since $A(\eta)$ contains at most one derivative of $\eta$. This yields the estimate
    \begin{equation}\label{eq:estimate-tildeu}
        \Holdernormper{\tilde{\bar{\bm{u}}}}{k+1}{\gamma} \leq C(R) \Holdernormper{\tilde{\bar{\bm{w}}}}{k}{\gamma} \leq C(R) (|M_1| + |M_2| + \Holdernormper{\bar{\bm{w}}}{k}{\gamma})
    \end{equation}
    with a different constant in the second step, which still depends only on $R$. To obtain \eqref{eq:schauder-without-lower-order-term} we finally estimate
    \[
        \Holdernormper{\bar{\bm{u}}}{k+1}{\gamma} = \Holdernormper{\tilde{\bar{\bm{u}}} - \dfrac{M_1}{|\Omega_{00}^0|} \bm{e}_1  - \dfrac{M_2}{|\Omega_{00}^0|} \bm{e}_2}{k+1}{\gamma} \leq  \Holdernormper{\tilde{\bar{\bm{u}}}}{k+1}{\gamma} + \dfrac{1}{|\Omega_{00}^0|}(|M_1| + |M_2|)
    \]
    and then apply \eqref{eq:estimate-tildeu}. This completes the proof.
\end{proof}

Combining Theorem \ref{thm:div-curl-physical-domain} and Proposition \ref{prop:schauder-est-no-lower-order}, we find the following result.

\begin{theorem}\label{thm:solution-flattened-div-curl}
    Let $k \geq 0$. Then there exists an operator
    \begin{equation*}
        \bar{\curlL} \colon C^{k, \gamma}_\mathrm{per, div, T}(\overline{\Omega^0}; \R^3) \times \curlB^{k+2, \gamma} \times \R^2 \to C^{k+1, \gamma}_\mathrm{per, div, T}(\overline{\Omega^0}; \R^3),
    \end{equation*} 
    which maps $(\bar{\bm{w}},\eta,\bm{M}) \in   C^{k, \gamma}_\mathrm{per, div, T} \times \curlB^{k+2, \gamma} \times \R^2$ to the unique solution to \eqref{eq:formulation-flat} in $C^{k+1, \gamma}_\mathrm{per, div, T}$. In particular, $\bar{\curlL}$ satisfies the estimate
    \[
        \Holdernormper{\bar{\curlL}(\bar{\bm{w}},\eta,\bm{M})}{k+1}{\gamma} \leq C(R) (|\bm{M}| + \Holdernormper{\bar{\bm{w}}}{k}{\gamma})
    \]
for $\eta \in \curlB^{k+2, \gamma}_{R^{-1}, R}$.
\end{theorem}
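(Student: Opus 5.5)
The statement is essentially a repackaging of results already established, so the plan is to assemble it from Theorem \ref{thm:div-curl-physical-domain}, Lemma \ref{lem:bound-flattening-trafo} and Proposition \ref{prop:schauder-est-no-lower-order}.

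\textbf{Step 1: definition of $\bar{\curlL}$.} Given $(\bar{\bm{w}},\eta,\bm{M})$ with $\eta\in\curlB^{k+2,\gamma}$, I choose $R>1/d$ so large that $\eta\in\curlB^{k+2,\gamma}_{R^{-1},R}$. This is possible since $\min\eta>-d$ and the norm is finite. By Lemma \ref{lem:bound-flattening-trafo}, $\bm{w}\coloneqq\frakG^{-1}\bar{\bm{w}}$ lies in $C^{k,\gamma}_\mathrm{per,div,T}(\overline{\Omega^\eta};\R^3)$. I then set
\[
\bar{\curlL}(\bar{\bm{w}},\eta,\bm{M})\coloneqq \frakG\,\curlL(\frakG^{-1}\bar{\bm{w}},\eta,\bm{M}).
\]
Theorem \ref{thm:div-curl-physical-domain} gives $\curlL(\cdot)\in C^{k+1,\gamma}_\mathrm{per,div,T}(\overline{\Omega^\eta})$. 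Since $\eta\in C^{k+2,\gamma}$, Lemma \ref{lem:bound-flattening-trafo} applied with $k+1$ shows that the flattened field lies in $C^{k+1,\gamma}_\mathrm{per,div,T}(\overline{\Omega^0})$.

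\textbf{Step 2: equivalence with the flattened system.} I check that $\bm{u}$ solves \eqref{eq:div-curl-full} with right-hand side $\bm{w}$ if and only if $\bar{\bm{u}}=\frakG\bm{u}$ solves \eqref{eq:formulation-flat} with right-hand side $\bar{\bm{w}}=\frakG\bm{w}$.
\begin{itemize}
\item The curl equation follows from the Piola-type transformation, as stated before \eqref{eq:Aeta}.
\item The divergence-free condition is preserved because $\frakG$ is the Piola transform.
\item The normal boundary condition is preserved because $\frakF$ maps boundaries to boundaries.
\item The integral condition is preserved because $\frakF$ fixes the horizontal variables. Hence $\int_{\Omega^0_{00}}\bar u_j\,\de V=\int_{\Omega^\eta_{00}}u_j\,\de V$ for $j=1,2$, by the change of variables together with the upper-triangular structure of $D\frakF$.
\end{itemize}
This equivalence gives both existence and uniqueness of a solution of \eqref{eq:formulation-flat} in $C^{k+1,\gamma}$, as already used in Corollary \ref{cor:schauder}. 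Because the solution is unique, $\bar{\curlL}$ does not depend on the auxiliary choice of $R$.

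\textbf{Step 3: the estimate.} This follows by applying Proposition \ref{prop:schauder-est-no-lower-order} to $\bar{\bm{u}}=\bar{\curlL}(\bar{\bm{w}},\eta,\bm{M})$, which gives a constant depending only on $R$.

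\textbf{Main obstacle.} The only delicate point is that the constant in Proposition \ref{prop:schauder-est-no-lower-order} must be uniform over $\eta\in\curlB^{k+2,\gamma}_{R^{-1},R}$. That uniformity rests on the compactness/contradiction argument in its proof. Since that proposition is assumed, nothing further is needed here beyond matching the hypotheses.
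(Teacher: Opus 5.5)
Your proposal is correct and matches the paper, which obtains this theorem simply by combining Theorem~\ref{thm:div-curl-physical-domain} (transported to $\Omega^0$ via $\frakG$, exactly as in the proof of Corollary~\ref{cor:schauder}) with Proposition~\ref{prop:schauder-est-no-lower-order} for the estimate. One cosmetic slip: $D\frakF$ is lower- rather than upper-triangular, but the property you actually use is right, namely that its first two rows are those of the identity, so that $\bar u_j=\det(D\frakF)\,u_j\circ\frakF$ for $j=1,2$.
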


\subsection{The dynamic boundary condition}\label{sec:dynamic-boundary-condition}
Recall  that the dynamic boundary condition \eqref{eq:dynBoundary} is given by
\[
    \dfrac{1}{2}|\bm{u}|^2 + g\eta -2\sigma K_M = Q(\bm c)
\]
with the mean curvature $K_M$. Under the flattening transform, the boundary condition transforms to
\begin{equation}\label{eq:dyn-bc-flattened}
    \dfrac{1}{2} B(\bar{\bm{u}},\eta) + g\eta - \sigma \nabla \cdot \left(\dfrac{\nabla \eta}{\sqrt{1 + |\nabla \eta|^2}}\right) = Q(\bm c).
\end{equation}
Here, $B$ is given by
\begin{equation}\label{eq:def-B-dyn-bc}
    B(\bar{\bm{u}},\eta) = \dfrac{1}{\det(D\frakF)^2} |D\frakF \bar{\bm{u}}|^2.
\end{equation}
Note that $B$ satisfies the estimate
\begin{equation}\label{eq:est-B-dyn-bc}
    \Holdernormper{B(\bar{\bm{u}},\eta)}{k}{\gamma} \leq C(R)\Holdernormper{\bar{\bm{u}}}{k}{\gamma}^2
\end{equation}
for any $k \geq 0$ and $\eta \in \curlB^{k+1,\gamma}_{R^{-1},R}$. 

To analyse the dynamic boundary condition, we define the operator
\begin{align*}
	L(\bm{h}) \eta = - \curlM (\bm{h}) : D^2\eta + g \eta,
\end{align*}
and 
\begin{align}\label{eq:def-curlM}
	   \curlM = \curlM(\bm{h}) = \dfrac{\sigma}{(1+\snorm{\bm{h}}^2)^{3/2}} \begin{pmatrix}
		1 + h_2^2 & - h_1 h_2 \\
		-h_2 h_1 & 1 + h_1^2
	\end{pmatrix}.
\end{align}
Here $D^2 \eta$ denotes the Hessian of $\eta$ and $A : B$ denotes a double contraction of the matrices $A, B \in \R^{2 \times 2}$ defined by
\begin{align*}
	A : B = \sum_{i,j = 1}^2 A_{ij} B_{ij}.
\end{align*}

\begin{remark}
	Note that we can recover the $\eta$-dependent part of the dynamic boundary condition \eqref{eq:dynBoundary} for $\bm{h} = \nabla \eta$, that is, \eqref{eq:dynBoundary} can be written as $L(\nabla \eta)\eta = - \tfrac{1}{2} |\bm{u}|^2 + Q(\bm c)$. Similarly, the flattened dynamic boundary condition \eqref{eq:dyn-bc-flattened} then takes the form
    \begin{equation*}
        L(\nabla \eta) \eta = -\dfrac{1}{2}B(\bar{\bm{u}},\eta) + Q(\bm c).
    \end{equation*}
\end{remark}

The matrix $\curlM$ is symmetric and positive definite with eigenvalues
\begin{align*}
	\lambda_1 = \dfrac{1}{(1+\snorm{\bm{h}}^2)^{3/2}}, \quad \lambda_2 = \dfrac{1}{(1+\snorm{\bm{h}}^2)^{1/2}}.
\end{align*}
This in particular means that there exists a constant $\lambda = \lambda(\norm[L^\infty]{\bm{h}})$ such that $L$ is strictly (uniformly) elliptic.
The constant $\lambda$ blows up if $\norm[L^\infty]{\bm{h}} \to +\infty$.
The following result holds.

\begin{theorem}\label{thm:dynBoundaryCond}
	Let $k \geq 0$. Then there exists an operator $\mathcal{K} \colon \Holderspace{k}(\R^2) \times \Holderspace{k}(\R^2) \to \Holderspace{k+2}(\R^2)$, which maps $(\bm{h},f) \in \Holderspace{k}(\R^2) \times \Holderspace{k}(\R^2)$ to the (unique) solution of
	\begin{align}
		L(\bm{h}) \eta = f
		\label{eq:pde}
	\end{align}
	in $\Holderspace{k}(\R^2)$. The operator depends on the constant $\lambda$, the gravity $g$ and on $\norm[L^\infty]{\bm{h}}$ and the estimate
    \begin{equation}\label{eq:schauder-est-dyn-bc}
        \Holdernormper{\curlK(\bm{h},f)}{k+2}{\gamma} \leq C(\Holdernormper{\bm{h}}{k}{\gamma}) \Holdernormper{f}{k}{\gamma}
    \end{equation}
    holds.
\end{theorem}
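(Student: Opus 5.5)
We would prove the theorem by the method of continuity, connecting $L(\bm{h})$ to the constant-coefficient operator $L_0 \eta = -\sigma\Delta\eta + g\eta$ on the torus $\R^2/\Lambda$. For $t\in[0,1]$ set $\curlM_t = (1-t)\sigma I + t\curlM(\bm{h})$ and $L_t\eta = -\curlM_t : D^2\eta + g\eta$. Each $\curlM_t$ is symmetric with eigenvalues between $\min\{\sigma,\sigma\lambda_1\}$ and $\sigma$, so the family is uniformly elliptic with a constant depending only on $\norm[L^\infty]{\bm{h}}$. Its coefficients lie in $\Holderspace{k}$ with norm bounded in terms of $\Holdernormper{\bm{h}}{k}{\gamma}$, since $\curlM$ is a smooth function of $\bm{h}$. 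For $t=0$, invertibility of $L_0 \colon \Holderspace{k+2}\to\Holderspace{k}$ follows from Fourier series: the symbol is $\sigma|\bm{\xi}|^2+g>0$ on the dual lattice. The Hölder bound for this case comes from periodic Schauder theory, or equivalently from applying the estimate below to $t=0$.

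The core of the argument is the uniform a priori estimate
\[
\Holdernormper{\eta}{k+2}{\gamma} \le C\,\Holdernormper{L_t\eta}{k}{\gamma},
\]
valid for all $t\in[0,1]$ and all $\eta\in\Holderspace{k+2}$, with $C$ depending only on $\Holdernormper{\bm{h}}{k}{\gamma}$ (through ellipticity and the coefficient norms) and on $g$. We obtain it in two steps.

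First, we bound the $C^0$ norm. At a maximum point $x_0$ of the periodic function $\eta$ we have $D^2\eta(x_0)\le 0$, so $-\curlM_t:D^2\eta(x_0)\ge 0$ by positive definiteness. This gives $g\max\eta \le \max f$, and symmetrically $g\min\eta\ge\min f$. Hence $\norm[C^0]{\eta}\le g^{-1}\norm[C^0]{f}$. The same argument gives uniqueness, since $L_t\eta=0$ forces $\eta=0$; this is where $g>0$ enters.

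Second, we apply the interior Schauder estimate for non-divergence form operators with $C^{0,\gamma}$ coefficients (Gilbarg–Trudinger, Thm.~6.2). By periodicity, we apply it on balls covering one period cell. This yields $\Holdernormper{\eta}{2}{\gamma}\le C(\norm[C^0]{\eta}+\Holdernormper{f}{0}{\gamma})$. For $k\ge1$ we differentiate the equation and bootstrap: the derivatives of the coefficients lie in $C^{k-1,\gamma}$. Combining the two steps gives the estimate.

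The method of continuity (Gilbarg–Trudinger, Thm.~5.2) now transfers invertibility from $L_0$ to $L_1 = L(\bm{h})$, with the same bound. We then define $\curlK(\bm{h},f) = L(\bm{h})^{-1}f$; estimate \eqref{eq:schauder-est-dyn-bc} follows, and uniqueness comes from the maximum-principle step.

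We expect the main obstacle to be the case $k=0$. There $\bm{h}$ is only in $C^{0,\gamma}$, so the coefficients are merely Hölder continuous. We must make sure the constant in the Schauder estimate depends only on the ellipticity constant and the $C^{0,\gamma}$ norm of $\curlM(\bm{h})$. This requires checking that $\bm{h}\mapsto\curlM(\bm{h})$ is locally Lipschitz, so that $\Holdernormper{\curlM(\bm{h})}{0}{\gamma}\le C(\norm[L^\infty]{\bm{h}})\Holdernormper{\bm{h}}{0}{\gamma}$. The same composition estimate in $C^{k,\gamma}$ is needed for the higher-order bootstrap.
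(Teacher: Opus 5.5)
Your proposal is correct and follows essentially the same route as the paper. Both use the method of continuity from a constant-coefficient operator (the paper uses $-\Delta+g$), a bound $\norm[C^0]{\eta}\le g^{-1}\norm[C^0]{f}$ from the maximum principle (your pointwise argument at a maximum point is a cleaner substitute for the paper's strong-maximum-principle argument), and interior Schauder estimates on domains containing a period cell; the only other difference is that the paper solves at $k=0$ and then upgrades regularity via Gilbarg--Trudinger Theorem 6.17, instead of running the continuity argument directly at level $k$. One small slip that does not affect the argument: your composition bound should be stated for the seminorm, $[\curlM(\bm{h})]_{\gamma}\le C(\norm[L^\infty]{\bm{h}})[\bm{h}]_{\gamma}$, since $\curlM(0)=\sigma I$ has nonzero norm.
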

\begin{remark}
	In the following, $C$ is a generic constant, which might depend on $\lambda$, $g$ and $\norm[L^\infty]{\bm{h}}$.
\end{remark}
\begin{proof}
	We prove the result in two steps. First, we show that \eqref{eq:pde} has a classical solution in $\Holderspace{2}$.
	Then we apply the interior regularity result of \cite[Theorem 6.17]{gilbarg2001} to solutions on an arbitrary domain $\Omega \subset \R^2$ with smooth boundary, which contains at least one periodic cell, to obtain that $\eta \in \Holderspace{k+2}$ for any $k \geq 0$.
	
	To obtain a classical solution of \eqref{eq:pde} in $\Holderspace{0}$, that is, for $f \in \Holderspace{0}$ there exists an $\eta \in \Holderspace{2}$, which satisfies \eqref{eq:pde}, we follow the proof of \cite[Theorem 6.8]{gilbarg2001} using the \emph{method of continuity}.
	Therefore, define 
	\begin{align*}
		L_0 \coloneqq -\Delta + g, \qquad L_1 \coloneqq - \curlM : D^2 + g.
	\end{align*}
	Both $L_0$ and $L_1$ are bounded operators from $\Holderspace{2}$ to $\Holderspace{0}$.
	Then, we define the interpolation operator $L_t \coloneqq (1-t) L_0 + t L_1 \colon \perspace{2} \to \perspace{0}$ for $t \in [0,1]$, which satisfies the estimate
	\begin{align*}
		\Holdernormper{\eta}{2}{\gamma} \leq C \Holdernormper{L_t \eta}{0}{\gamma}
	\end{align*}
	for all $t \in [0,1]$ and $\eta \in \Holderspace{2}$ with a constant $C < \infty$ independent of $t$ and $\eta$.
	
	Fix $t \in [0,1]$ and $\eta \in \Holderspace{2}$ and define $f \coloneqq L_t \eta \in \Holderspace{0}$.
	We show in the following Lemma \ref{lem:comparisonPrincipleEstimate} that the a priori estimate
	\begin{align}
		\pernorm{\eta}{0} \leq \dfrac{1}{g} \pernorm{f}{0} \leq \dfrac{1}{g}\Holdernormper{f}{0}{\gamma}
		\label{eq:aprioriEstimate}
	\end{align}
	holds for all $g > 0$.
	Furthermore, since $L_t \eta = f$ on any domain $\Omega$ with smooth boundary the interior Schauder estimate
	\begin{align}
		\norm[C^{2,\gamma}(\Omega')]{\eta} \leq C \left( \norm[C^0(\Omega)]{\eta} + \norm[C^{0,\gamma}(\Omega)]{f}\right)
		\label{eq:interiorSchauderEstimesGilbarg}
	\end{align}
	holds for any strict, compact subset $\Omega'$ of $\Omega$; see \cite[Corollary 6.3]{gilbarg2001}.
	Assuming that $\Omega$ and $\Omega'$ contain at least one periodic cell, we obtain the estimate
	\begin{align*}
		\Holdernormper{\eta}{2}{\gamma} \leq C \left(\pernorm{\eta}{0} + \Holdernormper{f}{0}{\gamma}\right) \leq C \Holdernormper{f}{0}{\gamma},
	\end{align*}
	where we used \eqref{eq:aprioriEstimate} in the second inequality.
	Thus, by the method of continuity \cite[Theorem 5.2]{gilbarg2001} we find that $L_1$ is surjective if and only if $L_0$ is. By standard results, $L_0$ is invertible since $g > 0$. Additionally, using that solutions of $L_1 \eta = f$ are unique due to \eqref{eq:aprioriEstimate}, we find that $L_1 = L$ is invertible and there exists a classical solution $\eta \in \Holderspace{2}$ of \eqref{eq:pde} for all $f \in \Holderspace{0}$.
	
	As mentioned in the beginning of the proof, we now use that $\curlM \in \Holderspace{k}$ (since $h \in \Holderspace{k}$) and $f \in \Holderspace{k}$ to find that the classical solution $\eta$ constructed above is actually in $\Holderspace{k+2}$; see \cite[Theorem 6.17]{gilbarg2001}. Finally, the estimate \eqref{eq:schauder-est-dyn-bc} follows by applying interior Schauder estimates \cite[Theorem 6.2]{gilbarg2001} and using \eqref{eq:aprioriEstimate}. This completes the proof.
\end{proof}

It remains to prove the a priori estimate \eqref{eq:aprioriEstimate}.

\begin{lemma}\label{lem:comparisonPrincipleEstimate}
	Let $\eta \in \perspace{2}$ be a solution to $L_t \eta = f$ for $f \in \perspace{0}$ and $t \in [0,1]$. Then the estimate
	\begin{align*}
		\pernorm{\eta}{0} \leq \dfrac{1}{g} \pernorm{f}{0}
	\end{align*}
	holds.
\end{lemma}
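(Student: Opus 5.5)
The estimate follows from the maximum principle at an extremal point of $\eta$, using periodicity to guarantee that such points exist. First I make the coefficients of $L_t$ explicit. For $t \in [0,1]$,
\[
L_t \eta = -\bigl((1-t) I + t\,\curlM(\bm{h})\bigr) : D^2\eta + g\eta .
\]
Write $\curlA_t \coloneqq (1-t) I + t\,\curlM(\bm{h})$. Since $I$ and $\curlM(\bm{h})$ are symmetric and positive definite, so is their convex combination $\curlA_t$, pointwise in $\bm{x}'$. No uniformity in $\bm{x}'$ or $t$ is needed here; pointwise positive semidefiniteness suffices.

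Next I locate the extrema. Since $\eta \in \perspace{2}$ is periodic with respect to $\Lambda$, it is determined by its values on the compact closure $\overline{B_{00}}$. Hence $\eta$ attains its global maximum and minimum on $\R^2$, say at $\bm{x}_{\max}$ and $\bm{x}_{\min}$.

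At the interior maximum $\bm{x}_{\max}$ of a $C^2$ function, the Hessian $D^2\eta(\bm{x}_{\max})$ is negative semidefinite. For a symmetric positive semidefinite matrix $A$ and a symmetric negative semidefinite matrix $H$ we have $A:H = \operatorname{tr}(AH) \le 0$. This can be seen by writing $A = A^{1/2}A^{1/2}$, so that $\operatorname{tr}(AH) = \operatorname{tr}(A^{1/2}HA^{1/2}) \le 0$. Therefore $-\curlA_t(\bm{x}_{\max}) : D^2\eta(\bm{x}_{\max}) \ge 0$, and hence
\[
g\,\eta(\bm{x}_{\max}) \le L_t\eta(\bm{x}_{\max}) = f(\bm{x}_{\max}) \le \pernorm{f}{0}.
\]
Symmetrically, at $\bm{x}_{\min}$ the Hessian is positive semidefinite, which gives $g\,\eta(\bm{x}_{\min}) \ge -\pernorm{f}{0}$. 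Combining the two bounds,
\[
-\pernorm{f}{0}/g \le \eta \le \pernorm{f}{0}/g \quad \text{on } \R^2,
\]
and since $g>0$ this is $\pernorm{\eta}{0} \le g^{-1}\pernorm{f}{0}$.

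There is no real obstacle. The only points needing care are the trace inequality $A:H\le 0$ and the existence of the global extrema, which periodicity ensures; without periodicity one would need a Phragmén–Lindelöf type argument instead. Note also that the argument uses only the pointwise values of $f$ and $\eta$, so it is unaffected by the Hölder regularity of $\bm{h}$.
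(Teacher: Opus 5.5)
Your proof is correct, but it takes a different, more elementary route than the paper.

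The paper applies the strong maximum principle to the comparison functions $v = \eta - \pernorm{f}{0}/g$ and $\eta + \pernorm{f}{0}/g$, which satisfy $L_t v \le 0$ (resp.\ $\ge 0$). It works on a bounded domain containing a periodic cell, chosen so that the extremum lies in its interior. It then needs a case distinction: a non-constant $v$ with a non-negative maximum is excluded by the strong maximum principle, and a positive constant $v$ is excluded by $L_t v = g v \le 0$.

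You instead evaluate the equation directly at a global extremum of $\eta$. You use only that $-\curlA_t : D^2\eta \ge 0$ there, by the trace inequality. This is the weak maximum principle for an operator whose zeroth-order coefficient $g$ is positive.

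Both arguments depend on periodicity forcing the extrema to be attained. Yours avoids the strong maximum principle entirely, including its uniform-ellipticity hypotheses on a domain and the constant/non-constant case split. It needs only pointwise positive semidefiniteness of $\curlA_t$. The extra information the strong maximum principle gives, namely constancy when a non-negative interior maximum is attained, is not needed for this estimate.
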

\begin{proof}
	The proof uses the strong maximum principle.
	Define $\tilde{\eta} \coloneqq \pernorm{f}{0}/g$. Then $v \coloneqq \eta - \tilde{\eta}$ satisfies
	\begin{align*}
		L_t v = f - \pernorm{f}{0} \leq 0
	\end{align*}
	on any domain $\Omega$.
	Now, assume that there exists an $\bm{x}' \in \R^2$ such that $v(\bm{x}') \geq 0$ and $v$ is not constant. Since $v$ is periodic, we can choose the domain $\Omega$ such that $v$ has a non-negative maximum in the interior of $\Omega$. Hence, using the strong maximum principle, we find that $v$ is constant in $\Omega$ -- a contradiction.
	Therefore, either $v(\bm{x}') \leq 0$ or $v(\bm{x}') = \text{const.} > 0$.
	However, in the latter case, we find $L_t v = g v$ and $L_t v \leq 0$, which yields $g v \leq 0$.
	Thus, since $g > 0$ this is impossible, and we find $v(\bm{x}') \leq 0$, which in turn implies that
	\begin{align*}
		\sup_{\bm{x}'} \eta(\bm{x}') \leq \tilde{\eta} = \dfrac{1}{g} \pernorm{f}{0}.
	\end{align*}
	
	Next, we define $u \coloneqq \eta + \tilde{\eta}$.
	This yields that
	\begin{align*}
		L_t u = f + \pernorm{f}{0} \geq 0.
	\end{align*}
	Following the same argument as above, we find $u(\bm{x}') \geq 0$.
	Thus,
	\begin{align*}
		\tilde{\eta} + \inf_{\bm{x}'} \eta(\bm{x}') \geq 0 \quad\Longleftrightarrow\quad \dfrac{1}{g} \pernorm{f}{0} \geq - \inf_{\bm{x}'} \eta(\bm{x}') = \sup_{\bm{x}'} (-\eta(\bm{x}')).
	\end{align*}
	Combining both estimates, we obtain the statement of the lemma.
\end{proof}

\subsection{Final reformulation}\label{sec:final-reformulation}

We can now write the flattened problem \eqref{eq:formulation-flat} and \eqref{eq:dyn-bc-flattened} as a fixed point problem for $(\bar{\bm{u}},\eta)\in C^{k, \gamma}_\mathrm{per, div, T}(\overline{\Omega^0}; \R^3) \times \curlB^{k+2, \gamma}$ of the form
\begin{align*}
    \bar{\bm{u}} &= \bar{\curlL}(\alpha \bar{\bm{u}}, \eta, \bm{M}(\eta, \bm{c})), \\
    \eta &= \mathcal{K}\left(\nabla \eta, Q(\bm{c}) - \frac{1}{2} B(\bar{\curlL}(\alpha \bar{\bm{u}}, \eta, \bm{M}(\eta, \bm{c})),\eta)\right) \eqqcolon \bar{\curlK}(\bar{\bm{u}}, \eta, \bm{c}).
\end{align*}
Therefore, the flattened problem \eqref{eq:formulation-flat} and \eqref{eq:dyn-bc-flattened} is equivalent to the formulation
\begin{align*}
    0 = \Phi - \mathcal{C}(\Phi, \bm{c}) \eqqcolon \curlF(\Phi,\bm{c})
\end{align*}
with $\Phi = (\bar{\bm{u}}, \eta) \in C^{k, \gamma}_\mathrm{per, div, T} \times \curlB^{k+2, \gamma}$ and
\begin{align*}
    \mathcal{C}(\Phi,\bm{c}) = \begin{pmatrix}
            \bar{\curlL}(\alpha \bar{\bm{u}}, \eta, \bm{M}(\eta, \bm{c})) \\
            \bar{\curlK}(\bar{\bm{u}}, \eta, \bm{c})
        \end{pmatrix}.
\end{align*}
The following result, which is the combination of Theorems \ref{thm:solution-flattened-div-curl} and \ref{thm:dynBoundaryCond}, now shows that this is of the form `identity plus compact'.

\begin{theorem}\label{thm:compactness}
    Fix $\delta\in (0, d)$ and $k \geq 0$. Then the operator $\curlC \colon  C^{k, \gamma}_\mathrm{per, div, T}(\overline{\Omega^0}; \R^3) \times \curlB^{k+2, \gamma}_\delta \times \R^2 \to C^{k, \gamma}_\mathrm{per, div, T}(\overline{\Omega^0}; \R^3)  \times C^{k+2, \gamma}_\mathrm{per}(\R^2)$ is compact.
\end{theorem}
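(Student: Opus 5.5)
The plan is to read ``compact'' as \emph{completely continuous}: $\curlC$ is continuous, and it maps every set $\mathcal{S} \subset C^{k,\gamma}_\mathrm{per, div, T} \times \curlB^{k+2,\gamma}_\delta \times \R^2$ that is bounded in norm to a relatively compact set. Since $\delta$ is fixed, any such bounded set lies in $C^{k,\gamma}_\mathrm{per, div, T} \times \curlB^{k+2,\gamma}_{R^{-1},R} \times \overline{B_R(0;\R^2)}$ for some $R$ large enough. This is the regime in which the uniform constants $C(R)$ of Theorem~\ref{thm:solution-flattened-div-curl} and \eqref{eq:est-B-dyn-bc} are available. Both components will gain one derivative, and the compact embeddings $C^{k+1,\gamma}_\mathrm{per} \hookrightarrow C^{k,\gamma}_\mathrm{per}$ (Arzelà--Ascoli on one closed periodic cell) will do the rest.

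\emph{First component.} The map $\eta \mapsto M_j(\eta,\bm{c}) = \int_{\Omega^\eta_{00}} U_j(\bm{c})\,\de V$ is explicit and continuous, and it is bounded on bounded sets. Theorem~\ref{thm:solution-flattened-div-curl} then gives
\[
\Holdernormper{\bar{\curlL}(\alpha\bar{\bm{u}},\eta,\bm{M})}{k+1}{\gamma} \le C(R)\bigl(|\bm{M}| + |\alpha|\Holdernormper{\bar{\bm{u}}}{k}{\gamma}\bigr).
\]
So the image of $\mathcal{S}$ is bounded in $C^{k+1,\gamma}$ and hence precompact in $C^{k,\gamma}$. The constraints $\nabla\cdot\bar{\bm{u}}=0$ and $\bar{\bm{u}}\cdot\bm{n}=0$ are preserved under uniform limits (the divergence condition in the weak sense), so $C^{k,\gamma}_\mathrm{per, div, T}$ is closed and the limit points stay in the right space.

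\emph{Second component.} Set $\bar{\bm{v}} = \bar{\curlL}(\alpha\bar{\bm{u}},\eta,\bm{M})$, which is bounded in $C^{k+1,\gamma}$. Because $\eta\in C^{k+2,\gamma}$, the estimate \eqref{eq:est-B-dyn-bc} applied with $k+1$ in place of $k$ shows that $f = Q(\bm{c}) - \tfrac12 B(\bar{\bm{v}},\eta)$ is bounded in $C^{k+1,\gamma}$. Likewise $\bm{h}=\nabla\eta$ is bounded in $C^{k+1,\gamma}$, and $\|\bm{h}\|_{L^\infty}\le R$, so the ellipticity constant $\lambda$ is uniform. Theorem~\ref{thm:dynBoundaryCond}, again with $k+1$ in place of $k$, then bounds $\bar{\curlK}$ in $C^{k+3,\gamma}$, which embeds compactly in $C^{k+2,\gamma}$.

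\emph{Continuity (expected main obstacle).} A direct difference estimate for $\bar{\curlL}$ is awkward. The difference $\bar{\bm{u}}_1-\bar{\bm{u}}_2$ solves a problem with right-hand side $\curl(A(\eta_1)(\bar{\bm{u}}_1-\bar{\bm{u}}_2))$, but it is not clear that its rewritten form lies in the admissible class of tangential, divergence-free fields. I would therefore use a compactness-plus-uniqueness argument instead.
\begin{itemize}
\item Take $(\Phi_n,\bm{c}_n)\to(\Phi,\bm{c})$ in the domain. By the bounds above, $\bar{\bm{v}}_n = \bar{\curlL}(\alpha\bar{\bm{u}}_n,\eta_n,\bm{M}_n)$ is bounded in $C^{k+1,\gamma}$.
\item Every subsequence has a further subsequence converging in $C^{k+1,\gamma'}$, $\gamma'<\gamma$, hence in $C^1$. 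Passing to the limit in the equations, using that $A(\eta_n)\to A(\eta)$ in $C^1$ and that the integral constraints are continuous, the limit solves \eqref{eq:formulation-flat} with data $(\alpha\bar{\bm{u}},\eta,\bm{M}(\eta,\bm{c}))$.
\item For $k=0$, where $\bar{\bm{v}}_n$ is only bounded in $C^{1,\gamma}$, the divergence condition is interpreted weakly and the curl equation is passed to the limit through its distributional form.
\item Uniqueness (Theorem~\ref{thm:solution-flattened-div-curl}) identifies the limit, so the whole sequence converges. Interpolating the $C^{k+1,\gamma}$ bound with convergence in lower norms gives convergence in $C^{k,\gamma}$.
\end{itemize}
The same argument, with uniqueness supplied by Lemma~\ref{lem:comparisonPrincipleEstimate}, gives continuity of $\mathcal{K}$: the data $(\bm{h}_n,f_n)$ converge in $C^{k+1,\gamma'}$ and the solutions $\eta_n$ are bounded in $C^{k+3,\gamma}$. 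This yields continuity of $\bar{\curlK}$ into $C^{k+2,\gamma}$ and completes the argument.
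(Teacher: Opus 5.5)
Your proof is correct and follows the paper's argument. Both show that bounded sets are mapped to bounded sets in $C^{k+1,\gamma}\times C^{k+3,\gamma}$, using Theorem \ref{thm:solution-flattened-div-curl}, \eqref{eq:est-B-dyn-bc} and Theorem \ref{thm:dynBoundaryCond} (the latter two with $k+1$ in place of $k$), and then conclude by the compact H\"older embedding. The paper proves only this bounded-to-precompact property and says nothing about continuity of $\curlC$; your compactness-plus-uniqueness argument supplies that missing piece soundly, and your observation that a direct difference estimate for $\bar{\curlL}$ would leave the admissible class of tangential data is the right reason to avoid it.
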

\begin{proof}
    Let $(\Phi_\ell)_{\ell \in \N} = (\bar{\bm{u}}_\ell, \eta_\ell)_{\ell\in\N}$ be a bounded sequence in $C^{k, \gamma}_\mathrm{per, div, T} \times \curlB^{k+2, \gamma}_\delta$ and $(\bm{c}_\ell)_{\ell \in \N}$ be a bounded sequence in $\R^2$. We show that this implies that the sequence $(\curlC(\Phi_\ell,\bm{c}_\ell))_{\ell\in\N}$ is bounded in $\Holderspace{k+1}(\overline{\Omega^0}; \R^3) \times \Holderspace{k+3}(\R^2)$. This establishes compactness due to compact embedding of H\"older spaces. Note first  that $\{\eta_\ell\} \subset \curlB^{k+2,\gamma}_{R^{-1}, R}$ for some sufficiently large $R$ and that $\bm{M}_\ell\coloneqq \bm{M}(\eta_\ell, \bm{c}_\ell)$ is bounded in $\R^2$.
    
The bound on the first component follows from Theorem \ref{thm:solution-flattened-div-curl}, which gives the estimate
    \[
        \Holdernormper{\bar{\curlL}(\alpha\bar{\bm{u}}_\ell,\eta_\ell, \bm{M}_\ell)}{k+1}{\gamma} \leq C(R) (|\bm{M}_\ell| + \Holdernormper{\bar{\bm{u}}_\ell}{k}{\gamma}).
    \]
    The right-hand side is bounded uniformly in $\ell \in \N$. 
    
    For the bound on the second component, we use the estimate established in Theorem \ref{thm:dynBoundaryCond}, which reads as
    \[
        \begin{split}
            \Holdernormper{\bar{\curlK}(\Phi_\ell,\bm{c}_\ell)}{k+3}{\gamma} &\leq C(\Holdernormper{\nabla \eta_\ell}{k+1}{\gamma}) \Holdernormper{Q(\bm{c}_\ell) - \tfrac{1}{2}B(\bar{\curlL}(\alpha \bar{\bm{u}}_\ell,\eta_\ell,\bm{M}_\ell),\eta_\ell)}{k+1}{\gamma} \\
            &\leq C(R)(1 + \Holdernormper{B(\bar{\curlL}(\alpha \bar{\bm{u}}_\ell,\eta_\ell,\bm{M}_\ell),\eta_\ell)}{k+1}{\gamma}) \\
            &\leq C\left(R,\Holdernormper{\bar{\curlL}(\alpha \bar{\bm{u}}_\ell,\eta_\ell,\bm{M}_\ell)}{k+1}{\gamma}\right) \\
            &\leq C\left(R,|\bm{M}_\ell|, \Holdernormper{\bar{\bm{u}}_\ell}{k}{\gamma}\right),
        \end{split}
    \]
    where we used estimate \eqref{eq:est-B-dyn-bc} to bound $B$. The above estimate provides an upper bound on $\bar{\curlK}$ in $\Holderspace{k+3}(\R^2)$, which is uniform in $\ell \in \N$. This completes the proof.
\end{proof}

In the bifurcation theory below, it is convenient to impose symmetry conditions. For a fixed $k\ge 0$ and $\gamma \in (0,1)$,  we therefore introduce the space
\[
\curlX=\{ (\bar{\bm{u}}, \eta) \in C^{k, \gamma}_\mathrm{per, div, T}(\overline{\Omega^0}; \R^3) \times C^{k+2, \gamma}_\mathrm{per}(\R^2) : 
\bar{u}_1, \bar{u}_2, \eta \text{ are even in } \bm{x}' \text{ and } \bar{u}_3 \text{ is odd in } \bm{x}'\}.
\]
\begin{remark}
    In the remaining paper, it is typically not relevant which exact $k \geq 0$ and $\gamma \in (0,1)$ are chosen. Therefore, we simply refer to the function space $\curlX$ without explicitly mentioning the chosen $k$ and $\gamma$.
\end{remark}
For $\delta \in (0, d)$ we also introduce the sets
\begin{equation}\label{eq:U}
\mathcal U= \{(\bar{\bm{u}}, \eta) \in \curlX : \eta \in \curlB^{k+2, \gamma}\},
\end{equation}
and
\begin{equation}\label{eq:Udelta}
    \mathcal U_\delta = \{(\bar{\bm{u}}, \eta) \in \curlX : \eta \in \curlB^{k+2, \gamma}_\delta\},
\end{equation}
noting that $\mathcal U=\bigcup_{0<\delta<d} \mathcal U_\delta$. The next result follows directly by noting that $\mathcal C$ preserves these symmetries.

\begin{corollary}\label{Fredholm_op}
The operator $\curlC \colon \mathcal U_\delta \times \R^2 \to \curlX$ is compact for each fixed $\delta>0$.
\end{corollary}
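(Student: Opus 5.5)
The corollary follows from Theorem \ref{thm:compactness} once we check that $\curlC$ maps $\mathcal U_\delta \times \R^2$ into $\curlX$, that is, that it preserves the parity conditions. A bounded sequence in $\mathcal U_\delta \times \R^2$ is in particular bounded in $C^{k,\gamma}_\mathrm{per, div, T} \times \curlB^{k+2,\gamma}_\delta \times \R^2$. Theorem \ref{thm:compactness} then gives a subsequence along which $\curlC(\Phi_\ell,\bm{c}_\ell)$ converges in $C^{k,\gamma}_\mathrm{per,div,T}\times C^{k+2,\gamma}_\mathrm{per}$. Since evenness and oddness in $\bm{x}'$ are closed conditions under uniform convergence, $\curlX$ is a closed subspace, so the limit lies in $\curlX$ provided each $\curlC(\Phi_\ell,\bm{c}_\ell)\in\curlX$.

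To show the symmetries are preserved, let $S$ denote the reflection $\bm{x}'\mapsto-\bm{x}'$. It acts on vector fields by $(S\bar{\bm{u}})(\bm{x}',z) = (\bar u_1,\bar u_2,-\bar u_3)(-\bm{x}',z)$ and on surfaces by $(S\eta)(\bm{x}')=\eta(-\bm{x}')$. Elements of $\curlX$ are exactly the fixed points of $S$. The argument has three steps.

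First, we show $\bar{\curlL}$ is equivariant, i.e. $\bar{\curlL}(S\bar{\bm{w}},S\eta,\bm{M}) = S\bar{\curlL}(\bar{\bm{w}},\eta,\bm{M})$.
\begin{itemize}
\item The full point reflection $P(\bm{x}',z)=(-\bm{x}',z)$ in the first two coordinates has determinant $+1$, so it commutes with curl on vector fields transformed as $\bm{v}\mapsto P\bm{v}\circ P$.
\item The field $S\bar{\bm{u}}$ has components $(\bar u_1,\bar u_2,-\bar u_3)\circ P$. This is \emph{not} the vector transformation $P\bm{v}\circ P$, which would be $(-\bar u_1,-\bar u_2,\bar u_3)\circ P$. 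The two differ by an overall sign, i.e. $S\bm{v}=-P\bm{v}\circ P$, and the sign is harmless because the system is linear in $(\bar{\bm{u}},\bar{\bm{w}})$.
\item The matrix $A(\eta)$ transforms compatibly: $A(S\eta)(P\cdot) = PA(\eta)P$. This holds because $\frakF$ for $S\eta$ is conjugate to $\frakF$ for $\eta$ by $P$.
\item The divergence, the boundary condition and the $z$-direction normal are preserved.
\item For the integral constraint, $\int \bar u_j$ over a period cell is invariant under $\bm{x}'\mapsto-\bm{x}'$ by periodicity. With the $S$ sign convention the first two components are not flipped, so $\bm{M}$ is unchanged, which is consistent.
\end{itemize}
Uniqueness in Theorem \ref{thm:solution-flattened-div-curl} then gives the equivariance. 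In particular, if $\bar{\bm{u}}$ and $\eta$ are $S$-invariant, then $\bar{\curlL}(\alpha\bar{\bm{u}},\eta,\bm{M})$ is $S$-invariant. We also need $\bm{M}(\eta,\bm{c})$ to be unchanged under $\eta\mapsto S\eta$; it is, because it is an integral over a period cell of functions of $z$ only.

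Second, for $\curlK$: when $\eta$ is even, $\nabla\eta$ is odd, so $\curlM(\nabla\eta)$ is even, since it is quadratic in $\bm{h}$. Also $B(\bar{\bm{u}},\eta)$ is even for even $\bar u_1,\bar u_2,\eta$ and odd $\bar u_3$: the quantity $|D\frakF\bar{\bm{u}}|^2$ is invariant, because the off-diagonal entries $\partial_i\eta$ are odd and pair with the odd $\bar u_3$. The operator $L(\nabla\eta)$ commutes with $\eta\mapsto\eta(-\cdot)$, so uniqueness in Theorem \ref{thm:dynBoundaryCond} gives that $\curlK$ of an even right-hand side is even.

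Third, we combine these and apply Theorem \ref{thm:compactness}. The only real obstacle is the sign bookkeeping in the first step, namely checking carefully that $A(\eta)$ and curl transform consistently with the pseudo-vector sign convention. Everything else reduces to uniqueness and closedness.
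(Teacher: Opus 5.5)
Your proposal is correct and follows the paper's argument. The paper likewise deduces the corollary from Theorem \ref{thm:compactness} by noting that $\curlC$ preserves the parity conditions defining $\curlX$. Your equivariance-plus-uniqueness check (with $S\bm{v} = -P\bm{v}\circ P$ and $A(S\eta)(P\bm{x}) = PA(\eta)(\bm{x})P$), together with the closedness of $\curlX$, fills in details the paper leaves implicit.

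One small wording fix concerns $B$. The odd entries $\partial_i\eta$ of $D\frakF$ multiply the even components $\bar u_1,\bar u_2$, not $\bar u_3$. These odd products add to the odd term $(1+\eta/d)\bar u_3$, so the third component of $D\frakF\bar{\bm{u}}$ is odd and its square is even, which is the conclusion you need.
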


As a consequence, $\curlF$ is of the form `identity plus compact' also with these symmetries, and hence so is its partial Fréchet derivative with respect to $\Phi$. Thus, $D_\Phi \curlF(\Phi, \bm{c})$ is a Fredholm operator of index $0$ on $\curlX$.

\section{Local bifurcation}\label{sec:local-bifurcation}

In this section, we perform a local bifurcation analysis, following the abstract approach in Appendix \ref{sec:abstract local bif}.
While the existence of small-amplitude solutions has already been proved using local bifurcation in \cite{lokharu2020, groves2024}, we give a short proof here as well for two reasons. On the one hand, our formulation of the problem is slightly different, and on the other hand, we also need a non-degeneracy result along the set of non-trivial small-amplitude solutions (Proposition \ref{prop:isomorphism}), which naturally fits in the framework of Appendix \ref{sec:abstract local bif}.
We thus consider the operator $\mathcal F \colon \mathcal U \times \R^2 \subset \curlX\times \R^2\to \curlX$. Compared to Appendix \ref{sec:abstract local bif}, there are two minor differences. First, the operator is defined on a subset of $\curlX$ rather than the whole space. This has no real consequence since all the arguments are local in nature. Second, the trivial solutions are given by $((\bm{U}(\bm{c}),0),\bm{c})$ rather than $(0, \bm{c})$. This can be solved by considering the modified operator
\[
F(\Psi, \bm{c})=\mathcal F((\bm{U}(\bm{c}),0)+\Psi, \bm{c})
\]
where $\Phi=(\bm{U}(\bm{c}),0)+\Psi$. This is done without explicit mention below.

We now establish the assumptions \ref{H1}--\ref{H6} of the abstract local bifurcation result, Theorem \ref{thm:seth}. We note that \ref{H1} follows directly from the analyticity of $\mathcal F$ and the fact that $\curlF$ is of the form `identity plus compact'. We then analyse the kernel of the linearisation of $\mathcal F$ at $\Phi=(\bm{U}(\bm{c}),0)$ in the direction $\Phi=(\bar{\bm{u}}, \eta)$, which is given by
\[
D_\Phi \curlF((\bm{U}(\bm{c}),0), \bm{c})\Phi=
\begin{pmatrix} \bar{\bm{u}}\\ \eta \end{pmatrix}-
\begin{pmatrix}
D \bar{\curlL}(\alpha \bm{U}(\bm{c}), 0, \bm{M}(0,\bm{c})) (\alpha \bar{\bm{u}}, \eta, D_\eta M_j(0, \bm{c})\eta)\\ 
D_\Phi \bar{\curlK}(\bm{U}(\bm{c}), 0, \bm{c}) ( \bar{\bm{u}}, \eta)
\end{pmatrix},
\]
where $\bar{\bm{v}}=D \bar{\curlL}(\alpha \bm{U}(\bm{c}), 0, \bm{M}(0,\bm{c})) (\alpha \bar{\bm{u}}, \eta, D_\eta M_j(0, \bm{c})\eta)$ is the unique solution of
\begin{alignat*}{2}
            \curl \bar{\bm{v}} &= \alpha\bar{\bm{u}}+\nabla \times \tilde{L}(\eta)\quad && \text{in } \Omega^0, \\
            \nabla \cdot \bar{\bm{v}} &= 0 \quad && \text{in } \Omega^0, \\
           \bar{v}_3 &= 0 \quad  && \text{on } \partial\Omega^0, \\
            \int_{\Omega_{00}^0} \bar{v}_j \,\de V  &=  \int_{B_{00}} U_j(\bm{c})|_{z=0}\eta\,\de x \, \de y \quad  &&\text{for } j = 1,2,
\end{alignat*}
in which
    \[
    \tilde{L}(\eta)\coloneqq  - A'(0)\eta\, \bm{U}(\bm{c}) = \frac{\eta}{d} \bm{U}(\bm{c})-\nabla \eta \cdot \bm{U}(\bm{c})\frac{z+d}{d} \bm{e}_3,
    \]
and we have used the fact that
    \[
            D_\eta M_j(0, \bm{c})\eta =  \int_{B_{00}} U_j(\bm{c})|_{z=0}\eta\,\de x \, \de y.
    \]
    We also find that $\zeta = D_\Phi \bar{\curlK}(\bm{U}(\bm{c}),0,\bm{c}) (\bar{\bm{u}}, \eta)$ is the unique solution to
    \[
    \begin{split}
        -\sigma \Delta \zeta + g \zeta &= -\dfrac{1}{2}\left(\partial_1 B(\bm{U}(\bm{c}),0)D \bar{\curlL}(\alpha \bm{U}(\bm{c}), 0, \bm{M}(0,\bm{c})) (\alpha \bar{\bm{u}}, \eta, D_\eta M_j(0, \bm{c})\eta)\ + \partial_2 B(\bm{U}(\bm{c}),0)\eta\right)|_{z=0}\\
        &= (c_1^2+c_2^2)\dfrac{\eta}{d} -c_1 \bar{v}_1 - c_2 \bar{v}_2.
    \end{split}
    \]
Thus, the linearised problem takes the form
\begin{equation}
\label{eq:linearization}
\begin{pmatrix} \bar{\bm{u}}\\ \eta \end{pmatrix}-\begin{pmatrix} \mathcal{A}_{11} & \mathcal{A}_{12} \\ \mathcal{A}_{21} & \mathcal{A}_{22} \end{pmatrix} \begin{pmatrix} \bar{\bm{u}}\\ \eta \end{pmatrix}=\begin{pmatrix} \bm{w} \\ f\end{pmatrix}.
\end{equation}

Let us next show that this can be reduced to an equation for $\eta$ under the non-resonance condition
\begin{equation}
\label{eq:nondegeneracy}
\sqrt{\alpha^2 -|\bm{k}|^2} \not \in \frac{\pi}{d}\mathbb Z_+ \quad \text{for all } \bm{k}\in \Lambda' \text{ such that } |\bm{k}| < |\alpha|,
\end{equation}
where $\Lambda'$ is the dual lattice of $\Lambda$;  see \eqref{eq:lattice}.
For this, we solve the first line in \eqref{eq:linearization} for $\bar{\bm{u}}$ and then plug this into the second equation to obtain a closed equation for $\eta$.
Indeed, the equation
\[
\bar{\bm{u}}-\mathcal A_{11} \bar{\bm{u}}=\tilde{\bm{w}}
\]
is equivalent to
\begin{alignat*}{2}
            \curl \bar{\bm{u}} -\alpha \bar{\bm{u}}&= \curl \tilde{\bm{w}}\quad && \text{in } \Omega^0, \\
            \nabla \cdot \bar{\bm{u}} &= 0\quad && \text{in } \Omega^0, \\
            \bar{u}_3 &= 0 \quad  && \text{on } \partial \Omega^0, \\
            \int_{\Omega_{00}^0} \bar{u}_j \,\de V  &= \int_{\Omega_{00}^0} \tilde{w}_j \,\de V   \quad  &&\text{for } j = 1,2.
\end{alignat*}
Moreover, the operator $I-\mathcal A_{11}$ has Fredholm index zero, being a compact perturbation of the identity operator. 
Under the condition \eqref{eq:nondegeneracy}, the kernel is trivial (see \cite[Lemma 2.2]{lokharu2020}), and thus 
there is an inverse $\curlT \tilde{\bm{w}}\coloneqq (I-\mathcal{A}_{11})^{-1}\tilde{\bm{w}}$, which solves the above system.
Hence, the system \eqref{eq:linearization} is equivalent to
\begin{equation}
\label{eq:reduction to surface}
\eta-(\mathcal{A}_{21} \curlT \mathcal A_{12} + \mathcal{A}_{22}) \eta=f+\mathcal{A}_{21} \curlT \bm{w}.
\end{equation}

We now prove that the operator $I-(\mathcal{A}_{21}\curlT \mathcal A_{12}+\mathcal{A}_{22})$ in the left-hand side of \eqref{eq:reduction to surface} is a Fourier multiplier operator with symbol 
\[
\tilde\rho(\bm{c},\bm{k})\coloneqq \frac{\rho(\bm{c},\bm{k})}{g+\sigma |\bm{k}|^2}, 
\]
in which
\[
\rho(\bm{c},\bm{k})\coloneqq
g+\sigma |\bm{k}|^2-\frac{(\bm{c}\cdot \bm{k})^2}{|\bm{k}|^2} \kappa(|\bm{k}|)+\alpha \frac{(\bm{c}\cdot \bm{k})(\bm{c}\cdot \bm{k}^\perp)}{|\bm{k}|^2},
\]
where   
\[
\kappa(|\bm{k}|) \coloneqq
\begin{cases}
\sqrt{\alpha^2-|\bm{k}|^2}\cot(\sqrt{\alpha^2-|\bm{k}|^2}d) & \text{if } |\alpha |>|\bm{k}|,\\ 
\frac{1}{d}  & \text{if } |\alpha |=|\bm{k}|, \\ 
\sqrt{|\bm{k}|^2-\alpha^2}\coth(\sqrt{|\bm{k}|^2-\alpha^2}d) & \text{if } |\alpha |<|\bm{k}|,\\ 
\end{cases}
\]
and $\bm{k}^\perp=(-k_2, k_1)$.
Indeed, $\bar{\bm{v}}=\curlT \mathcal A_{12}\eta$ is the solution to the system
\begin{alignat*}{2}
            \curl \bar{\bm{v}} -\alpha \bar{\bm{v}}&= \curl \tilde L(\eta)\quad && \text{in } \Omega^0, \\
            \nabla \cdot \bar{\bm{v}} &= 0\quad && \text{in } \Omega^0, \\
            \bar{v}_3 &= 0 \quad  && \text{on } \partial \Omega^0, \\
            \int_{\Omega_{00}^0} \bar{v}_j \,\de V  &= \int_{B_{00}} U_j(\bm{c})|_{z=0}\eta   \,\de x \, \de y   \quad  &&\text{for } j = 1,2.
\end{alignat*}
Setting $\bar{\bm{v}}=\bm{w}^\eta+\bm{v}$, with $\bm{w}^\eta(x,y,z)=\frac{\eta(x,y)}{d}((z+d)\bm{U}(\bm{c})(z))'-\nabla \eta(x,y) \cdot \bm{U}(\bm{c})(z) \frac{z+d}{d}\bm{e}_3$, we can transform this system into    
\begin{alignat*}{2}
            \curl \bm{v} -\alpha \bm{v}&= 0 \quad && \text{in } \Omega^0, \\
            \nabla \cdot \bm{v} &= 0\quad && \text{in } \Omega^0, \\
            v_3 &= c_1 \eta_x+c_2 \eta_y \quad  && \text{on } z=0, \\
            v_3 &= 0 \quad  && \text{on } z=-d, \\
            \int_{\Omega_{00}^0} v_j \,\de V  &= 0  \quad  &&\text{for } j = 1,2,
\end{alignat*}
and hence
\begin{align*}
(\mathcal A_{22}+\mathcal A_{21} \curlT \mathcal A_{12})\eta&=(g-\sigma \Delta)^{-1} \left((c_1^2+c_2^2)\dfrac{\eta}{d} -\bm{c}\cdot \mathcal A_{12} \eta-\bm{c}\cdot \mathcal A_{11} \curlT \mathcal A_{12}\eta\right)|_{z=0}\\
&=(g-\sigma \Delta)^{-1} \left((c_1^2+c_2^2)\dfrac{\eta}{d} -\bm{c}\cdot \curlT \mathcal A_{12}\eta\right)|_{z=0}\\
&=(g-\sigma \Delta)^{-1} \left((c_1^2+c_2^2)\dfrac{\eta}{d}-\bm{c}\cdot \bm{w}^\eta-\bm{c}\cdot \bm{v}\right)|_{z=0}\\
&=-(g-\sigma \Delta)^{-1} (\bm{c}\cdot \bm{v})|_{z=0},
\end{align*}
where we have used the identity $\mathcal A_{11}\curlT=\curlT-I$, and abused notation by writing $\bm{c}\cdot \bm{v}=c_1v_1+c_2v_2$ etc.
The claimed result now follows by using the identity
\[
 (c_1 \hat{v}_1+c_2\hat{v}_2)|_{z=0}=\left(-\frac{(\bm{c}\cdot \bm{k})^2}{|\bm{k}|^2} \kappa(|\bm{k}|)+\alpha \frac{ (\bm{c}\cdot \bm{k})(\bm{c}\cdot \bm{k}^\perp)}{|\bm{k}|^2} \right)\hat{\eta},
\]
see \cite[Section 3.1]{lokharu2020}.    
The kernel of $I-\mathcal A$ is thus spanned by $(\curlT\mathcal A_{12}\eta, \eta)$ for $\eta=\cos(\bm{k}\cdot \bm{x}')$ such that $\bm{k}$ solves the dispersion equation $\rho(\bm{c},\bm{k})=0$. We assume that at $\bm{c}=\bm{c}^*$, the generators $\bm{k}_1$, $\bm{k}_2$ (and $-\bm{k}_1$, $-\bm{k}_2$) of the dual lattice $\Lambda'$ are the only solutions, and let $\eta_1=\cos(\bm{k}_1 \cdot \bm{x}')$, $\eta_2=\cos(\bm{k}_2\cdot \bm{x}')$. This assumption is expected to hold for generic parameter values as discussed in \cite[Sec.~3.1]{lokharu2020}. Thus, the kernel is two-dimensional, and the range has codimension two. The kernel is spanned by $\Phi_i=(\curlT \mathcal A_{12}\eta_i, \eta_i)$, $i=1,2$, and the range is given  by $(\bm{w}, f)$ such that  
$f+\mathcal A_{21} \curlT \bm{w}$ is orthogonal to  $\eta_1$ and $\eta_2$ in $L^2_\mathrm{per}(\R^2)$. Therefore, assumption \ref{H2} is satisfied.

It remains to establish the assumptions \ref{H3}--\ref{H6}. For this, we let
\[
\curlX_i=\spn\{ \Phi_i\} 
\]
and
\[
\curlY_i=\spn\{ (0, \eta_i)\}
\]
and decompose $\curlX= \curlX_1 \bigoplus \curlX_2 \bigoplus \tilde{\curlX}$, 
where $\tilde{\curlX}=\{ (\bar{\bm{u}}, \eta)\in \curlX : (\eta, \eta_i)_{L^2_\mathrm{per}(\R^2)}=0, \, i=1,2\}$ and $\curlY=\curlY_1\oplus \curlY_2\oplus \tilde \curlY$, where $\tilde \curlY=\ran(I-\mathcal A)=\{(\bm{w},f)\in \curlX : f+\mathcal A_{21} \curlT \bm{w} \perp \eta_i, i=1, 2\}$.
We now define the projections onto  $\curlX_i$ and $\curlY_i$, respectively.
The projection $Q_i : \curlX \to \curlX_i$ is given by
\[
Q_i(\bar{\bm{u}}, \eta)=q_i(\curlT\mathcal A_{12}\eta_i, \eta_i)
\]
where
\[
q_i=q_i(\bar{\bm{u}}, \eta)=\frac{(\eta, \eta_i)_{L^2_\mathrm{per}(\R^2)}}{\|\eta_i\|_{L^2_\mathrm{per}(\R^2)}^2}.
\]
Note that $Q_i$ is not the standard orthogonal projection onto the kernel vector $\Phi_i = (\curlT \curlA_{12} \eta_i,\eta_i)$. The two main advantages of this choice of projection are that it allows for a more direct comparison with the local bifurcation result in \cite{lokharu2020} and provides a natural interpretation of the parameterisation in the global bifurcation result, as discussed in Remark \ref{rem:interpretation-of-hat-s}. 

The projection $P_i : \curlY \to \curlY_i$ is given by
\[
P_i (\bm{w},f)=p_i(0, \eta_i)
\]
where
\[
p_i=p_i(\bm{w},f)=\frac{(f+\mathcal A_{21}\curlT \bm{w}, \eta_i)_{L^2_\mathrm{per}(\R^2)}}{\|\eta_i\|_{L^2_\mathrm{per}(\R^2)}^2}.
\]
Therefore, we have \ref{H3} and \ref{H4}. Note also for later use that
\[
\tilde \rho(\bm{c},\bm{k}_i)=\frac{((I-(\mathcal A_{21}\curlT \mathcal A_{12}+\mathcal A_{22}))\eta_i, \eta_i)_{L^2_\mathrm{per}(\R^2)}}{\|\eta_i\|_{L^2_\mathrm{per}(\R^2)}^2}.
\]

To check the transversality condition \ref{H5}, we must compute 
\[
-P_i\left(\partial_{c_j} \mathcal A(\bm{c}^*)\begin{pmatrix} \curlT \mathcal A_{12} \eta_i \\ \eta_i\end{pmatrix}\right), \quad i, j=1,2.
\]
Noting that $\mathcal A_{11}$ is independent of $\bm{c}$, we find that
\[
\partial_{c_j} \mathcal A(\bm{c}^*)\begin{pmatrix} \curlT \mathcal A_{12} \eta_i \\ \eta_i\end{pmatrix}
=\begin{pmatrix} \partial_{c_j} \mathcal A_{12}\eta_i \\ \partial_{c_j}\mathcal A_{21} \curlT \mathcal A_{12} \eta_i + \partial_{c_j}\mathcal A_{22}\eta_i\end{pmatrix}
\]
and thus the corresponding projection coefficient $p_i$ is given by
\[
\begin{split}
-\frac{(\partial_{c_j}\mathcal A_{21} \curlT \mathcal A_{12} \eta_i + \partial_{c_j}\mathcal A_{22}\eta_i+\mathcal A_{21}\curlT \partial_{c_j} \mathcal A_{12}\eta_i, \eta_i)_{L^2_\mathrm{per}(\R^2)}}
{\|\eta_i\|_{L^2_\mathrm{per}(\R^2)}^2}
&=-\partial_{c_j} \frac{((\mathcal A_{21}\curlT \mathcal A_{12}+\mathcal A_{22})\eta_i, \eta_i)_{L^2_\mathrm{per}(\R^2)}}{\|\eta_i\|_{L^2_\mathrm{per}(\R^2)}^2}\\
&=\partial_{c_j}\tilde \rho(\bm{c}^*,\bm{k}_i)\\[1mm]
&=\frac{\partial_{c_j} \rho(\bm{c}^*,\bm{k}_i)}{g+\sigma|\bm{k}_i|^2},
\end{split}
\]
where we used in the first equality that since $\curlA_{11}$ is independent of $\bm{c}$, so is $\curlT = (I - \curlA_{11})^{-1}$. It follows that the transversality condition \ref{H5} in Theorem \ref{thm:seth} is equivalent to the transversality condition 
\begin{align}\label{tranv_cond_small_c}
    \nabla_{\bm{c}}\rho(\bm{c}^*, \bm{k}_1) \quad \text{and} \quad \nabla_{\bm{c}}\rho(\bm{c}^*, \bm{k}_2) \; \text{are not parallel} 
\end{align}
in \cite{lokharu2020}.
Finally, assumption \ref{H6} is satisfied by taking $\tilde{\curlX}_i$ and $\tilde{\curlY}_i$ to be the subspaces of $\tilde{\curlX}$ and $\tilde{\curlY}$, respectively, consisting of functions which are constant in the direction $\bm{\lambda}_i$ (see the proofs of \cite[Theorem 4.1]{lokharu2020} and \cite[Theorem 5.3]{seth2024}).
We have thus recovered the following result from \cite{lokharu2020} in our framework.

\begin{theorem}
\label{thm:local bifurcation}
    Let $\alpha \in \R$, $\sigma>0$ and the depth $d>0$ be given, as well as a laminar
flow $\bm{U}(\bm{c})$. Furthermore, let $\Lambda'$ be the dual lattice of $\Lambda$, see \eqref{eq:lattice}, generated by the linearly independent vectors $\bm{k}_1, \bm{k}_2 \in \Lambda'$. Assume that    
    \begin{enumerate}
        \item the non-resonance condition \eqref{eq:nondegeneracy} holds; also assume that  $\alpha d \notin 2\pi\mathbb{Z}$;
        \item within the lattice $\Lambda'$ the dispersion equation $\rho(\bm{c},\bm{k})=0$ with $c_1=c_1^*, c_2=c_2^*$ has exactly four roots $\pm \bm{k}_1, \pm \bm{k}_2$;
        \item the  transversality condition \eqref{tranv_cond_small_c} holds.
    \end{enumerate}
    Then there exist an $\varepsilon>0$ and real-analytic functions $(\Phi(\bm{s}), \bm{c}(\bm{s}))$ from $B_\epsilon(0; \R^2)$ to $\mathcal U \times \R^2$ such that $\curlF(\Phi(\bm{s}),\bm{c}(\bm{s}))=0$ for all $\bm{s}\in B_\epsilon(0; \R^2)$ and
    \begin{align*}
        \Phi(\bm{s})=(\bm{U}(\bm{c}),0)+s_1\Phi_1  +s_2 \Phi_2+O(|\bm{s}|^2), \quad \bm{c}(\bm{s})=\bm{c}^*+O(|\bm{s}|^2).
    \end{align*}
    In particular, $\Phi = (\bar{\bm{u}},\eta)$ with
    \begin{align*}
        \eta(\bm{x}')=s_1\cos(\bm{k}_1\cdot \bm{x}')+s_2\cos(\bm{k}_2 \cdot \bm{x}')+O(|\bm{s}|^2).
    \end{align*}
\end{theorem}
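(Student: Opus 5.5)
The plan is to apply the abstract multi-parameter bifurcation result, Theorem \ref{thm:seth}, to the shifted operator $F(\Psi,\bm{c})=\curlF((\bm{U}(\bm{c}),0)+\Psi,\bm{c})$ on a neighbourhood of $(0,\bm{c}^*)$ in $\curlX\times\R^2$. The statement then follows by translating its conclusion back to $\Phi=(\bm{U}(\bm{c}),0)+\Psi$. Since all arguments are local, it is harmless that $\curlF$ is only defined on the open set $\mathcal U\times\R^2$: for $\Psi$ small in $\curlX$, $\eta$ stays in $\curlB^{k+2,\gamma}_\delta$ for some fixed $\delta$.

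The first step is to verify \ref{H1}. We need real-analyticity of $\curlF$ in $(\Psi,\bm{c})$ near the trivial branch, together with the Fredholm property. For analyticity, I would argue that $A(\eta)$, $\det(D\frakF)$, $B$ and $\curlM$ are compositions of analytic Nemytskii-type maps in Hölder spaces. By the implicit function theorem, the solution operators $\bar{\curlL}$ and $\curlK$ then depend analytically on their arguments, since they invert bounded linear operators depending analytically on $(\eta,\bm{h})$, as established in Theorems \ref{thm:solution-flattened-div-curl} and \ref{thm:dynBoundaryCond}. The Fredholm property (index zero) follows from Corollary \ref{Fredholm_op}. We also need $F(0,\bm{c})=0$, which holds because the laminar flow solves \eqref{eq:formulation} with $\eta=0$ and $\bm{M}=\bm{M}(0,\bm{c})$.

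The second step verifies \ref{H2}--\ref{H4} using the computation preceding the statement. Under \eqref{eq:nondegeneracy}, the operator $I-\curlA_{11}$ is invertible with inverse $\curlT$, which reduces $I-\curlA$ to the Fourier multiplier with symbol $\tilde\rho(\bm{c},\bm{k})$ acting on even periodic $\eta$. The symmetry restriction in $\curlX$ leaves only cosines. Assumption (ii) then gives a two-dimensional kernel spanned by $\Phi_1,\Phi_2$, and the range is characterised by the two orthogonality conditions. The condition $\alpha d\notin2\pi\Z$ enters when excluding a kernel coming from $\bm{k}=0$, i.e. laminar-type perturbations, which would otherwise be resonant. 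I would check that the $\bm{k}=0$ mode of $\eta$ produces no kernel thanks to the integral constraint together with this condition. The projections $Q_i,P_i$ defined above give \ref{H3}--\ref{H4}.

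The third step treats \ref{H5} and \ref{H6}. For \ref{H5}, the computation $p_i=\partial_{c_j}\rho(\bm{c}^*,\bm{k}_i)/(g+\sigma|\bm{k}_i|^2)$ shows that the $2\times2$ transversality matrix has rows proportional to $\nabla_{\bm{c}}\rho(\bm{c}^*,\bm{k}_i)$. It is therefore invertible exactly under \eqref{tranv_cond_small_c}. For \ref{H6}, I restrict to the invariant subspaces of functions constant along $\bm{\lambda}_i$. The problem preserves these subspaces, and on each of them the kernel is one-dimensional, which yields the one-parameter (Crandall--Rabinowitz type) families needed in Theorem \ref{thm:seth}.

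Finally, Theorem \ref{thm:seth} provides analytic $(\Psi(\bm{s}),\bm{c}(\bm{s}))$ with $\Psi=s_1\Phi_1+s_2\Phi_2+O(|\bm{s}|^2)$ and $\bm{c}(\bm{s})=\bm{c}^*+O(|\bm{s}|^2)$. Reading off the $\eta$-component gives the stated expansion. I expect the main obstacle to be justifying analyticity of $\bar{\curlL}$ and $\curlK$ in $\eta$, particularly the domain-dependent integral constraint $\bm{M}(\eta,\bm{c})$. The second delicate point is the invariance required in \ref{H6}. For both, I would mirror the arguments of \cite{lokharu2020,seth2024}.
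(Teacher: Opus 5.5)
Your route is the paper's: shift by the laminar flow, verify \ref{H1}--\ref{H6} through the multiplier $\tilde\rho$, the projections $Q_i,P_i$, the identity $\nu_{ij}=\partial_{c_j}\rho(\bm c^*,\bm k_i)/(g+\sigma|\bm k_i|^2)$ and the subspaces of functions constant along $\bm\lambda_i$, then apply Theorem \ref{thm:seth}. The step that does not go through as written is the last one. Theorem \ref{thm:seth} only gives $x(\bm s)=\sum_i s_ix_i+o(|\bm s|)$ and $\bm c(\bm s)=\bm c^*+O(|\bm s|)$. Analyticity upgrades the first expansion to $O(|\bm s|^2)$. Nothing in the abstract result, however, removes a linear term from $\bm c(\bm s)$. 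So $\bm c(\bm s)=\bm c^*+O(|\bm s|^2)$ is an additional claim that needs its own argument. The paper flags exactly this in the remark after the theorem and defers to \cite{lokharu2020}.

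To close it, go into the reduction behind Theorem \ref{thm:seth}. By \ref{H6}, the $i$th bifurcation equation factors as $s_i h_i(\bm s,\bm c)=0$ with $\partial_{c_j}h_i(0,\bm c^*)=\nu_{ij}$. Hence $D\bm c(0)=-(\nu_{ij})^{-1}D_{\bm s}h(0,\bm c^*)$. At $\bm c=\bm c^*$ the complementary component is $O(|\bm s|^2)$ and $P_iL=0$. Therefore $D_{\bm s}h_i(0,\bm c^*)$ is built from $P_iD^2_\Psi F[0,\bm c^*](\Phi_i,\Phi_l)$, $l=1,2$. Because $F$ commutes with horizontal translations, this quadratic interaction only contains the wave vectors $\pm\bm k_i\pm\bm k_l$. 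None of these equals $\pm\bm k_i$, since $\bm k_1,\bm k_2$ are linearly independent. As $P_i$ only detects the $\pm\bm k_i$ mode, $D_{\bm s}h(0,\bm c^*)=0$, and so $\bm c(\bm s)-\bm c^*=O(|\bm s|^2)$.

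A smaller point concerns where $\alpha d\notin 2\pi\Z$ enters. It is what makes $I-\curlA_{11}$ injective on $\bm x'$-independent fields: nontrivial laminar fields $\bm U(\bm a)$ with zero cell mean exist exactly when $\alpha d\in 2\pi\Z\setminus\{0\}$. So the condition is used through the existence of $\curlT$. At the level of $\eta$, the reduced operator is simply the identity on constants.
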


\begin{remark}
From the abstract result Theorem \ref{thm:seth}, it only follows that $\bm{c}=\bm{c}^* + O(|\bm{s}|)$. The fact that the linear terms in the expansion vanish can be proved as in \cite{lokharu2020}.
\end{remark}

Preparing for the global bifurcation analysis, let us fix a unit direction $\hat{\bm{s}} \in S^1\subset \R^2$, with $\hat{s}_1\hat{s}_2\ne 0$, in the parameter space and consider the operator
\begin{equation}\label{eq:curlG}
    \curlG(\tilde \Phi, \bm{c}, r)=\mathcal F((\bm{U}(\bm{c}),0)+r\hat{s}_1 \Phi_1+r\hat{s}_2 \Phi_2+\tilde \Phi, \bm{c}).
\end{equation}
Moreover, let $\tilde \Phi(\bm{s})=\Phi(\bm{s})-s_1\Phi_1- s_2 \Phi_2-(\bm{U}(\bm{c}(\bm{s})),0)$ with $\Phi(\bm{s})$ and $\bm{c}(\bm{s})$ as in Theorem \ref{thm:local bifurcation}.
The following result is a direct consequence of Proposition \ref{prop-app:isomorphism}.
\begin{proposition}\label{prop:isomorphism}
Under the assumptions of the local bifurcation result, Theorem \ref{thm:local bifurcation}, there exists an $r_0=r_0(\hat{\bm{s}})>0$ such that 
\[
D_{(\tilde \Phi, \bm{c})} \curlG(\tilde \Phi(r\hat{\bm{s}}), \bm{c}(r\hat{\bm{s}}),r) \colon \tilde \curlX \times \R^2 \to \curlX
\]
is an isomorphism for each $r\in (0, r_0)$.
\end{proposition}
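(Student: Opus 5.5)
The plan is to reduce the statement to a finite-dimensional determinant computation via Lyapunov--Schmidt, using the structure of Theorem \ref{thm:seth}. Write $\mathcal L(r)\coloneqq D_{(\tilde\Phi,\bm c)}\curlG(\tilde\Phi(r\hat{\bm s}),\bm c(r\hat{\bm s}),r)$, so that
\[
\mathcal L(r)(\psi,\bm\mu)=D_\Phi\curlF(\Phi(r\hat{\bm s}),\bm c(r\hat{\bm s}))\psi+\bigl(D_\Phi\curlF\,(\partial_{\bm c}\bm U(\bm c)\cdot\bm\mu,0)+D_{\bm c}\curlF\,\bm\mu\bigr)
\]
evaluated along the local branch, with $\psi\in\tilde\curlX$ and $\bm\mu\in\R^2$.

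\textbf{Step 1 (Fredholm structure).} By Corollary \ref{Fredholm_op}, $D_\Phi\curlF$ is identity plus compact on $\curlX$. Restricted to $\tilde\curlX$, which has codimension two, it is Fredholm of index $-2$. Adding the rank-two map in $\bm\mu$ gives $\mathcal L(r)\colon\tilde\curlX\times\R^2\to\curlX$ Fredholm of index $0$. It therefore suffices to show that $\mathcal L(r)$ is injective for small $r>0$.

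\textbf{Step 2 (reduction to a $2\times 2$ matrix).} Decompose the target as $\curlY=\curlY_1\oplus\curlY_2\oplus\tilde\curlY$. The $\tilde\curlY$-component of $\mathcal L(r)$ restricted to $\tilde\curlX$ is invertible for small $r$, since it is invertible at $r=0$ by \ref{H2}--\ref{H4} and the set of isomorphisms is open. Solving that component for $\psi$ in terms of $\bm\mu$ (with $\psi=O(r)|\bm\mu|$ or better) leaves a $2\times2$ system $P_i\mathcal L(r)(\psi(\bm\mu),\bm\mu)=0$. So injectivity is equivalent to $\det\mathcal M(r)\neq0$ for a real-analytic $2\times 2$ matrix $\mathcal M(r)$.

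\textbf{Step 3 (expansion of $\mathcal M(r)$).} At $r=0$ the matrix is $\mathcal M(0)=(\partial_{c_j}\tilde\rho(\bm c^*,\bm k_i))_{ij}$, which is invertible by the transversality condition \eqref{tranv_cond_small_c}. This alone would finish the proof, except that it gives invertibility only if the $\tilde\curlY$-reduction were unaffected. The real issue is that $\tilde\curlX$ excludes the kernel directions. The missing directions $\Phi_1,\Phi_2$ are replaced by only two parameters $\bm\mu$, while $D_\Phi\curlF$ along the branch has, to first order in $r$, a nearly two-dimensional kernel.

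To handle this, I would use the identity obtained by differentiating $\curlF(\Phi(\bm s),\bm c(\bm s))=0$ in $\bm s$. This shows that $\partial_{s_j}\Phi$ together with $\partial_{s_j}\bm c$ span the kernel of the full derivative in $(\Phi,\bm c)$. The direction $r\hat{\bm s}$ is fixed in $\curlG$, but the transverse direction $\hat{\bm s}^\perp$ is not. Hence $\mathcal L(r)$ is injective if and only if no nontrivial combination of these tangent vectors lies in $\tilde\curlX\times\R^2$. Expressed through the projections $q_i$, the tangent vectors have $q$-components equal to $\bm e_j$, so none of them lies in $\tilde\curlX$. What remains is to rule out kernel elements of $\mathcal L(r)$ beyond the tangent space, which requires the derivative of the full map to be surjective along the branch.

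\textbf{Main obstacle.} The hard step is showing that the full derivative has exactly the two-dimensional kernel given by the tangent plane for small $r\neq0$. I would establish this through the reduced bifurcation equation $r\,\hat s_i\,\tilde\rho$-type factorization from Theorem \ref{thm:seth}, in the form $\Psi_i(\bm s,\bm c)=s_i\,g_i(\bm s,\bm c)$. Its Jacobian along the branch is
\[
\operatorname{diag}(s_i)\,\partial_{\bm c}g+\text{h.o.t.}
\]
With $s_i=r\hat s_i$, this matrix has determinant $r^2\hat s_1\hat s_2\det\mathcal M(0)+O(r^3)$, which is nonzero for small $r>0$ precisely because $\hat s_1\hat s_2\neq0$. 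This is the content of Proposition \ref{prop-app:isomorphism}, and it determines $r_0(\hat{\bm s})$.
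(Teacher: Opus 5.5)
Your last paragraph contains the right mechanism, and it is the paper's. Proposition \ref{prop:isomorphism} is read off from Proposition \ref{prop-app:isomorphism}, whose proof proceeds in four moves. It factors the entries $P_iG_{c_j}$ along the branch as $r\hat s_i k_{ij}(r\hat{\bm s})$ with $k_{ij}(0)=\nu_{ij}$. It divides the $i$-th row by $r\hat s_i$. It eliminates $\bm\mu$ using \ref{H5}. What remains is a small perturbation of $L|_{\tilde\curlX}$. You eliminate in the opposite order ($\psi$ first, then a $2\times 2$ determinant), which is equally valid. Your Schur complement $\mathcal M(r)$ is exactly $\partial_{\bm c}\Psi$ of the Lyapunov--Schmidt function at $(r\hat{\bm s},\bm c(r\hat{\bm s}))$. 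So $\Psi_i=s_ig_i$ gives $\mathcal M(r)=\operatorname{diag}(r\hat s_1,r\hat s_2)\,\partial_{\bm c}g$ exactly, with $\partial_{\bm c}g(0,\bm c^*)=(\nu_{ij})$.

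Two points must be corrected, however.

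First, Step 3 is false. Since $\curlG(0,\bm c,0)=\curlF((\bm U(\bm c),0),\bm c)=0$ for every $\bm c$, one has $D_{\bm c}\curlG(0,\bm c^*,0)=0$. Hence $\mathcal M(0)=0$, and $\mathcal L(0)$ has the two-dimensional kernel $\{0\}\times\R^2$. The transversality matrix $(\partial_{c_j}\tilde\rho(\bm c^*,\bm k_i))_{ij}$ is not $\mathcal M(0)$. It is the limit of $\operatorname{diag}(r\hat s_1,r\hat s_2)^{-1}\mathcal M(r)$ as $r\to 0$. This is precisely why one needs $r>0$ and $\hat s_1\hat s_2\neq 0$, and why $r_0$ depends on $\hat{\bm s}$. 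The tangent-plane detour can also be deleted. Proving that the kernel of $D_{(\Phi,\bm c)}\curlF$ along the branch is exactly the tangent plane is equivalent to the proposition itself, so it reduces nothing.

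Second, the factorization $\Psi_i=s_ig_i$ is not part of the statement of Theorem \ref{thm:seth}. It is where \ref{H6} enters, and you should derive it:
\begin{itemize}
\item For $s_i=0$, local uniqueness of the solution of the $\tilde\curlY$-equation forces $\tilde\Phi$ into the subspace of functions constant in the direction $\bm\lambda_i$. That subspace also contains $\Phi_j$ for $j\neq i$, since $\bm k_j\cdot\bm\lambda_i=0$.
\item $\curlF$ maps such functions to functions constant in that direction.
\item On those, $P_i$ vanishes, because they have no $\cos(\bm k_i\cdot\bm x')$ mode.
\end{itemize}
This is the one genuinely structural input. With Step 3 replaced by this factorization argument, your proof is complete.
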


\begin{remark}\label{rem:choice-of-hat-s}
    Note that we cannot choose $\hat{\bm{s}} \in S^1$ with $\hat{s}_1\hat{s}_2=0$ in the above proposition. However, this choice corresponds to waves which are $2\tfrac{1}{2}$-dimensional close to the bifurcation point; see \cite[Proof of Theorem 4.1]{lokharu2020}. Therefore, these branches are of less interest in the present analysis. On the other hand, the choice $\hat{s}_1 \hat{s}_2 \neq 0$ guarantees that the bifurcating solutions correspond to genuinely three-dimensional waves; see also Remark \ref{rem:interpretation-of-hat-s}.
\end{remark}

\section{Global bifurcation}\label{sec:global-bif}

The goal of this section is to construct a global curve of solutions to the equation $\curlG(\tilde \Phi, \bm{c}, r)=0$. We do this by using analytic global bifurcation theory, see, for example, \cite{buffoni2003}, in the following non-standard way. As the reformulation of the bifurcation problem suggests, we treat the additional parameter $r$ as the new bifurcation parameter and $(\tilde{\Phi},\bm{c})$ as new variables. Recall in particular that $r$  is a scalar parameter, and we therefore have a one-dimensional parameter space; see Figure \ref{fig:schematic-global-bifurcation} for a schematic depiction. However, at the bifurcation point $r=0$, the linearisation $D_{(\tilde{\Phi},\bm{c})}\curlG(0,\bm{c}^*,0)$ still has a two-dimensional kernel. Hence, we cannot apply the analytic global bifurcation theorem \cite[Theorem 9.1.1]{buffoni2003} directly. Nevertheless, it turns out that $D_{(\tilde{\Phi},\bm{c})}\curlG(\tilde{\Phi}(r\hat{\bm{s}}),\bm{c}(r\hat{\bm{s}}),r)$ is an isomorphism for $r$ in a small neighbourhood of $0$ (see Proposition \ref{prop:isomorphism}), which is sufficient to apply a variation of the global bifurcation theorem recorded in Appendix \ref{app:globalBifurcation}. 

Before we state the resulting theorem, we introduce some additional notation. We define for any $\delta \in (0, d)$ the sets
\begin{equation*}
    \begin{split}
        \tilde{\curlU}_\delta(\hat{\bm{s}}) &\coloneqq \{(\tilde{\Phi},\bm{c},r) \in \tilde{\curlX} \times \R^2 \times \R \,:\, (\bm{U}(\bm{c}),0) + r\hat{s}_1 \Phi_1 + r\hat{s}_2 \Phi_2 + \tilde{\Phi} \in \curlU_\delta\}, \\
        \tilde{\curlU}(\hat{\bm{s}}) &\coloneqq \{(\tilde{\Phi},\bm{c},r) \in \tilde{\curlX} \times \R^2 \times \R \,:\, (\bm{U}(\bm{c}),0) + r\hat{s}_1 \Phi_1 + r\hat{s}_2 \Phi_2 + \tilde{\Phi} \in \curlU\},
    \end{split}
\end{equation*}
where we recall the definition of $\curlU$ and $\curlU_\delta$ given in \eqref{eq:U} and \eqref{eq:Udelta}, respectively.
Additionally, we denote the set of solutions to $\curlG(\tilde{\Phi},\bm{c},r)=0$ in $\tilde{\curlU}_\delta$ or $\tilde{\curlU}$ by
\begin{equation*}
    \begin{split}
        \mathcal{S}_\delta(\hat{\bm{s}}) &\coloneqq \{(\tilde{\Phi},\bm{c},r) \in \tilde{\curlU}_\delta(\hat{\bm{s}}) \,:\, \curlG(\tilde{\Phi},\bm{c},r)=0\}, \\
        \mathcal{S}(\hat{\bm{s}}) &\coloneqq \{(\tilde{\Phi},\bm{c},r) \in \tilde{\curlU}(\hat{\bm{s}}) \,:\, \curlG(\tilde{\Phi},\bm{c},r)=0\}.
    \end{split}
\end{equation*}
As in Section \ref{sec:final-reformulation}, we note that $\tilde{\curlU}(\hat{\bm{s}}) = \bigcup_{0<\delta < d} \tilde{\curlU}_\delta(\hat{\bm{s}})$ and $\curlS(\hat{\bm{s}}) = \bigcup_{0<\delta < d} \curlS_\delta(\hat{\bm{s}})$. Using these notations, we can formulate our first global bifurcation result, which extends the bifurcation curve along a fixed direction $\hat{\bm{s}} \in S^1$ in parameter space.

\begin{theorem}\label{thm:global-bifurcation}
    Under the assumptions of Theorem \ref{thm:local bifurcation} the following holds. Fix $\hat{\bm{s}} \in S^1$ with $\hat{s}_1\hat{s}_2 \neq 0$. Then, there exists a continuous global bifurcation curve
    \begin{equation*}
        \mathfrak{R}(\hat{\bm{s}}) = \{(\tilde{\Phi}(t),\bm{c}(t),r(t)) \in \tilde{\curlU}(\hat{\bm{s}}) \,:\, t \in [0,\infty)\} \subset \mathcal{S}(\hat{\bm{s}})
    \end{equation*}
    of solutions to $\curlG(\tilde{\Phi},\bm{c},r) = 0$. In particular, $\mathfrak{R}(\hat{\bm{s}})$ extends the local bifurcation curve in Theorem \ref{thm:local bifurcation} in the sense that
    \begin{equation*}
        \{(\Phi(r\hat{\bm{s}}) - r\hat{s}_1 \Phi_1 - r\hat{s}_2 \Phi_2 - (\bm{U}(\bm{c}(r\hat{\bm{s}})),0),\bm{c}(r\hat{\bm{s}}),r) \,:\, r \in [0,\varepsilon)\} \subset \mathfrak{R}(\hat{\bm{s}}),
    \end{equation*}
    where $\Phi(\bm{s})$ and $\bm{c}(\bm{s})$ are given in Theorem \ref{thm:local bifurcation} and $\Phi_i$, $i = 1,2$ span the kernel of $D_\Phi \curlF((\bm{U}(\bm{c}^*),0),\bm{c}^*)$.
    Additionally, $\mathfrak{R}(\hat{\bm{s}})$ has a local analytic re-parameterisation and at least one of the following alternatives occurs:
    \begin{enumerate}[label=(a\arabic*),ref=(a\arabic*)]
        \item\label{global-alternative-1} the curve $\mathfrak{R}(\hat{\bm{s}}) = \{(\tilde{\Phi}(t),\bm{c}(t),r(t)) \,:\, t \in [0,\infty)\}$ eventually leaves every set 
        \begin{equation*}
            \tilde{\curlU}_\delta(\hat{\bm{s}}) \cap \{\|\tilde{\Phi}\|_\curlX + |\bm{c}| + |r| < \delta^{-1}\},
        \end{equation*}
        that is, for every $\delta \in (0,d)$ there exists a $t_\delta > 0$ such that $(\tilde{\Phi}(t),\bm{c}(t),r(t)) \notin \tilde{\curlU}_\delta(\hat{\bm{s}}) \cap \{\|\tilde{\Phi}\|_\curlX + |\bm{c}| + |r| < \delta^{-1}\}$ for all $t > t_\delta$.
        \item\label{global-alternative-2} $\mathfrak{R}(\hat{\bm{s}})$ is a closed loop, that is, there exists a $T > 0$ such that $\mathfrak{R}(\hat{\bm{s}}) = \{(\tilde{\Phi}(t),\bm{c}(t),r(t)) \,:\, t \in [0,T]\}$ and $(\tilde{\Phi}(T),\bm{c}(T),r(T)) = (0,0,0)$.
    \end{enumerate}
\end{theorem}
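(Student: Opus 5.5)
The proof consists of checking the hypotheses of the abstract variant of analytic global bifurcation theory, Theorem \ref{thm-app:global-bifurcation}, for the map $\curlG \colon \tilde{\curlU}(\hat{\bm{s}}) \subset \tilde{\curlX}\times\R^2\times\R \to \curlX$ from \eqref{eq:curlG}. We treat $(\tilde\Phi,\bm{c})$ as the unknown and $r$ as the scalar parameter.

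\textbf{Step 1: structural hypotheses.} We verify the following.
\begin{enumerate}
    \item $\curlG$ is real-analytic. This follows because $\curlF$ is analytic, and $\bm{c}\mapsto\bm{U}(\bm{c})$ and $r\mapsto r\hat{s}_i\Phi_i$ are affine.
    \item $\curlG$ has the form `identity plus compact' in the variable $(\tilde\Phi,\bm{c})$. Writing $\Phi=(\bm{U}(\bm{c}),0)+r\hat{s}_1\Phi_1+r\hat{s}_2\Phi_2+\tilde\Phi$, we have
    \[
    \curlG(\tilde\Phi,\bm{c},r)=\tilde\Phi-\bigl[\curlC(\Phi,\bm{c})-(\bm{U}(\bm{c}),0)-r\hat{s}_1\Phi_1-r\hat{s}_2\Phi_2\bigr].
    \]
    The bracket is compact on each $\tilde{\curlU}_\delta(\hat{\bm{s}})$ by Corollary \ref{Fredholm_op}: the finite-dimensional terms are compact, and $\curlC$ maps bounded sets of $\curlU_\delta\times\R^2$ to relatively compact sets. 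The extra $\R^2$-direction $\bm{c}$ is a finite-rank perturbation. Hence $D_{(\tilde\Phi,\bm{c})}\curlG$ is Fredholm of index $0$ as a map $\tilde{\curlX}\times\R^2\to\curlX$, because $\tilde{\curlX}$ has codimension two in $\curlX$.
    \item Closed bounded subsets of the solution set $\curlS_\delta(\hat{\bm{s}})$ are compact. Take a bounded sequence of solutions. Since $\tilde\Phi=\curlC(\dots)+{}$finite-rank terms, we can extract a convergent subsequence, using compactness of $\curlC$ and Bolzano–Weierstrass in the parameters.
\end{enumerate}

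\textbf{Step 2: local input.} By Theorem \ref{thm:local bifurcation}, $r\mapsto(\tilde\Phi(r\hat{\bm{s}}),\bm{c}(r\hat{\bm{s}}),r)$ for $r\in[0,\varepsilon)$ is an analytic curve of solutions emanating from $(0,\bm{c}^*,0)$. By Proposition \ref{prop:isomorphism}, the linearisation is an isomorphism along this curve for $r\in(0,r_0)$. This is exactly the replacement, used in Theorem \ref{thm-app:global-bifurcation}, for the usual simple-kernel and transversality assumptions. In the classical proof of \cite[Thm.~9.1.1]{buffoni2003}, those assumptions only serve to produce a distinguished analytic arc of solutions along which the linearisation is invertible away from the bifurcation point.

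\textbf{Step 3: global continuation.} With Steps 1 and 2 in place, we apply Theorem \ref{thm-app:global-bifurcation}. It yields a continuous curve $\mathfrak{R}(\hat{\bm{s}})$ with a local analytic reparameterisation that extends the local arc. The curve is built by continuing through distinguished arcs of the analytic variety, in the manner of Dancer and Buffoni–Toland. The theorem also gives the alternatives:
\begin{itemize}
    \item the curve is unbounded or leaves every $\tilde{\curlU}_\delta$, which is \ref{global-alternative-1};
    \item the curve is a closed loop returning to $(0,\bm{c}^*,0)$, which is \ref{global-alternative-2}, with the point written as $(0,0,0)$ after the shift by $\bm{c}^*$ implicit in the statement.
\end{itemize}
To formulate \ref{global-alternative-1} in the stated form, we exhaust $\tilde{\curlU}(\hat{\bm{s}})$ by the sets $\tilde{\curlU}_\delta(\hat{\bm{s}})\cap\{\|\tilde\Phi\|_\curlX+|\bm{c}|+|r|<\delta^{-1}\}$, whose closures have compact solution sets by Step 1.

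\textbf{Main obstacle.} The delicate point is Step 1(ii)–(iii): compactness has to hold on the relatively closed sets $\tilde{\curlU}_\delta$, which are not all of $\tilde{\curlX}\times\R^2\times\R$. Bounds from Theorem \ref{thm:compactness} depend on $\delta$ and $R$, so the exhaustion must be chosen compatibly, and limits must stay in $\eta>-d+\delta$. The second subtlety is to confirm that the abstract theorem really requires only local invertibility along the initial arc, and not at $r=0$. This is the content of Proposition \ref{prop-app:isomorphism} combined with Theorem \ref{thm-app:global-bifurcation}, which we take as given.
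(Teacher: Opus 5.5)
Your proposal is correct and follows the paper's proof. Both verify the hypotheses of Theorem \ref{thm-app:global-bifurcation} with $\xi=(\tilde\Phi,\bm{c})$, using Corollary \ref{Fredholm_op} for the Fredholm property, Theorem \ref{thm:local bifurcation} and Proposition \ref{prop:isomorphism} for the local arc and its invertibility, and the exhaustion $\tilde{\curlU}_{1/j}(\hat{\bm{s}})\cap\{\|\tilde\Phi\|_\curlX+|\bm{c}|+|r|<j\}$ for compactness. The differences are minor: you get compactness of the solution sets from the compactness of $\curlC$ rather than directly from the Schauder estimates and compact embeddings (which is how that compactness is proved anyway), and you take closures of the exhaustion sets, which meets the closedness requirement of Theorem \ref{thm-app:global-bifurcation} more carefully than the paper, which calls these sets open.
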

\begin{proof}
    We check that the assumptions of Theorem \ref{thm-app:global-bifurcation} are satisfied. Note that the notation here translates to the one used in Theorem \ref{thm-app:global-bifurcation} via $\xi = (\tilde{\Phi},\bm{c})$ and $U = \tilde{\curlU}(\hat{\bm{s}})$. First, since any fixed $(\tilde{\Phi},\bm{c},r) \in \tilde{\curlU}(\hat{\bm{s}})$ lies in some $\tilde{\curlU}_\delta(\hat{\bm{s}})$, we obtain from Corollary \ref{Fredholm_op} that $D_{(\tilde{\Phi},\bm{c})}\curlG(\tilde{\Phi},\bm{c},r) \colon \tilde{\curlX} \times \R^2 \to \curlX$ is of the form `identity plus compact' and thus a Fredholm operator of index zero, if we identify the complement $\curlX_1 \oplus \curlX_2$ of $\tilde{\curlX}$ in $\curlX$ with $\R^2$. Second, we recall that Theorem \ref{thm:local bifurcation} establishes an analytic local bifurcation curve $\curlR$ given by \begin{equation*}
        \curlR = \{(\tilde{\Phi}(r(t)\hat{\bm{s}}),\bm{c}(r(t)\hat{\bm{s}}),r(t)) \in \tilde{\curlU}_\delta(\hat{\bm{s}}) \,:\, t \in [0,\epsilon)\}
    \end{equation*}
    with $r(t) = t$. Third, Proposition \ref{prop:isomorphism} shows that $D_{(\tilde{\Phi},\bm{c})} \curlG(\tilde{\Phi}(r\hat{\bm{s}}),\bm{c}(r\hat{\bm{s}}),r) \colon \tilde{\curlX} \times \R^2 \rightarrow \curlX$ is an isomorphism for $0 < r \ll 1$. Finally, we define for any $j \in \N$ with $j>1/d$ that $\mathcal{Q}_j = \tilde{\curlU}_{1/j}(\hat{\bm{s}}) \cap \{\|\tilde{\Phi}\|_\curlX + |\bm{c}| + |r| < j\}$, which are bounded and open subsets of $\tilde{\curlU}(\hat{\bm{s}})$ and satisfy $\tilde{\curlU}(\hat{\bm{s}}) = \bigcup_{j} \mathcal{Q}_j$. We now check that $\curlS \cap \mathcal{Q}_j$ is compact for all $j$. Assuming that $(\tilde{\Phi},\bm{c},r) \in \curlS \cap \mathcal{Q}_j$, we can apply the estimates \eqref{eq:schauder-estimate} and \eqref{eq:schauder-est-dyn-bc} established in Corollary \ref{cor:schauder} and Theorem \ref{thm:dynBoundaryCond}, respectively. By compact embedding of $\Holderspace{k+1}$ into $\Holderspace{k}$, this shows that $\curlS \cap \mathcal{Q}_j$ is compact. Therefore, we have established all assumptions of Theorem \ref{thm-app:global-bifurcation} and the stated result follows. Note in particular that since the $\tilde{\curlU}_\delta$ are nested sets, we automatically obtain the alternative that $(\tilde{\Phi}(t),\bm{c}(t),r(t))$ leaves $\tilde{\curlU}_\delta(\hat{\bm{s}})$ for every $\delta \in (0,d)$.
\end{proof}

\begin{remark}\label{rem:interpretation-of-hat-s}
   Due to the choice of projections in the local bifurcation theory, we can give a concrete interpretation to the parameter $\bm{s} = r\hat{\bm{s}}$.
    Recall that any solution on the global branch $\mathfrak{R}(\hat{\bm{s}})$ is of the form
    \begin{equation*}
        \Phi = (\bar{\bm{u}},\eta) = (\bm{U}(\bm{c}),0) + r\hat{s}_1 \Phi_1 + r\hat{s}_2 \Phi_2 + \tilde{\Phi},
        \qquad \tilde{\Phi} = (\tilde{\bm{u}},\tilde{\eta}) \in \tilde{\curlX}.
    \end{equation*}
    Using that $\Phi_i = (\curlT\mathcal{A}_{12}\eta_i,\eta_i)$ has surface
    component $\eta_i = \cos(\bm{k}_i \cdot \bm{x}')$, and that $(\tilde{\eta},\eta_i)_{L^2_\mathrm{per}(\R^2)} = 0$
    by the definition of $\tilde{\curlX}$, the surface component thus reads as
    \begin{equation*}
        \eta(\bm{x}') = r\hat{s}_1 \cos(\bm{k}_1 \cdot \bm{x}') + r\hat{s}_2 \cos(\bm{k}_2 \cdot \bm{x}') + \tilde{\eta}(\bm{x}'),
        \qquad (\tilde{\eta}, \eta_i)_{L^2_\mathrm{per}(\R^2)} = 0, \quad i = 1,2.
    \end{equation*}
    Therefore, we find that $r \hat{\bm{s}}_j$ are the Fourier coefficients of the cosine modes $\cos(\bm{k}_j \cdot \bm{x}')$ for $j = 1,2$. In particular, the condition $\hat{s}_1\hat{s}_2 \neq 0$ guarantees that the solutions on the global bifurcation branch are genuinely three-dimensional surface waves as long as $r(t) \neq 0$.
\end{remark}

\begin{remark}\label{rem:two-dimensional solution set}
We have obtained a curve $\mathfrak{R}(\hat{\bm s})$ of solutions parameterised by $t$ for each fixed $\hat{\bm s} \in S^1$ with $\hat s_1 \hat s_2\ne 0$. A natural question is how these curves depend on $\hat{\bm s}$ and whether we obtain some form of two-dimensional surface or variety by varying $t$ and $\hat{\bm s}$. A simple consequence of the implicit function theorem is that at each point where $D_{(\tilde{\Phi},\bm{c})}\curlG(\tilde{\Phi},\bm{c},r)$ is an isomorphism, the curve $\mathfrak{R}(\hat{\bm s})$ is locally part of a two-dimensional analytic surface obtained by varying $\hat{\bm s}$. However, this surface is not necessarily made up of different curves $\mathfrak{R}(\hat{\bm s})$. Indeed, these curves might start to diverge at points where  $D_{(\tilde{\Phi},\bm{c})}\curlG(\tilde{\Phi},\bm{c},r)$ fails to be invertible, as the simple example $xy=s$ shows. For $s=0$, the solution set is the union of the $x$-axis and the $y$-axis, while for $s\ne 0$ it has the solution $y=s/x$, which switches between following the $x$-axis and the $y$-axis as $x\to \pm \infty$ and $x\to 0^\pm$. We make no attempt at describing the local variety of solutions  in a way which  involves $\hat{\bm s}$ near points where $\ker D_{(\tilde{\Phi},\bm{c})}\curlG(\tilde{\Phi},\bm{c},r)$ is non-trivial.  
\end{remark}

We now transfer the global bifurcation result in Theorem \ref{thm:global-bifurcation} to a global bifurcation result for doubly periodic waves in the original Beltrami-flow problem \eqref{eq:formulation}. In particular, we provide a more refined set of alternatives. That is, we prove that the alternative \ref{global-alternative-1} can be interpreted as the blow-up of the surface gradient $\nabla \eta$ in $C^{0,\gamma}_{\mathrm{per}}$, the blow-up of the wave velocity $\bm{c}$, or that the surface eventually intersects the flat bed. Specifically, we can exclude an isolated blow-up of the velocity field $\bm{u}$ along the global bifurcation branch.
To do this, we exploit that the Bernoulli equation prevents the $L^2$-blow-up of the velocity field at the surface provided that the surface profile is $C^1$-bounded and the wave velocity $\bm{c}$ is bounded. To use this observation, we thus need to bound $\bar{\bm{u}}$ in terms of its surface values. 
Such a bound is provided in Lemma \ref{est-of-u-in-eta} below, which relies on the following  low-order  estimate for solutions to \eqref{eq:formulation-flat}
\begin{equation}
        \Holdernormper{\bar{\bm{u}}}{0}{\gamma}
        \leq C(\delta^{-1}, R) (\|\bar{\bm{u}}\|_{C^0_\mathrm{per}} + \|\bar{\bm{w}}\|_{C^0_\mathrm{per}}), \quad \eta \in \curlB_{\delta, R}^{1,\gamma}, \label{est-u_0_gamma}
\end{equation} 
proved in Appendix \ref{sec:Schauder estimates}; see Theorem \ref{thm-app:schauder}.

\begin{lemma}\label{est-of-u-in-eta} Let $\eta \in \curlB^{1,\gamma}_{R^{-1},R}$ and $\bar{\bm{u}} \in C^{1}_\mathrm{per}$ with $\nabla \cdot \bar{\bm{u}} = 0$, $\bar{\bm{u}} \cdot \bm{n} = 0$ on $\partial \Omega^0$. Then, the estimate
\[
\|\bar{\bm{u}}\|_{C^{0,\gamma}_{\text{per}}} \le C(R) \left( \|\bar{\bm{u}}|_{z=0}\|_{L^2} + \|\nabla \times (A(\eta)\bar{\bm{u}}) - \alpha \bar{\bm{u}}\|_{C^{0}_\mathrm{per}}  \right)
\]  
holds.
\end{lemma}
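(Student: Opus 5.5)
The plan is to combine the low-order Schauder estimate \eqref{est-u_0_gamma} with a compactness--contradiction argument. Uniqueness will come from unique continuation for Beltrami fields with vanishing trace on the top boundary. Throughout, write $\bm{f} \coloneqq \nabla \times (A(\eta)\bar{\bm{u}}) - \alpha \bar{\bm{u}}$.

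\textbf{Step 1: reduce to a $C^0$ bound.} Apply \eqref{est-u_0_gamma} with $\bar{\bm{w}} = \nabla \times (A(\eta)\bar{\bm{u}}) = \bm{f} + \alpha\bar{\bm{u}}$ and $\delta = R^{-1}$. This gives
\[
\Holdernormper{\bar{\bm{u}}}{0}{\gamma} \le C(R)\bigl(\|\bar{\bm{u}}\|_{C^0_\mathrm{per}} + \|\bm{f}\|_{C^0_\mathrm{per}}\bigr),
\]
with $|\alpha|$ absorbed into the constant. It therefore suffices to prove
\[
\|\bar{\bm{u}}\|_{C^0_\mathrm{per}} \le C(R)\bigl(\|\bar{\bm{u}}|_{z=0}\|_{L^2} + \|\bm{f}\|_{C^0_\mathrm{per}}\bigr).
\]

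\textbf{Step 2: contradiction setup.} Suppose the reduced estimate fails. Then there are $\eta_n \in \curlB^{1,\gamma}_{R^{-1},R}$ and admissible $\bar{\bm{u}}_n$, normalised by $\Holdernormper{\bar{\bm{u}}_n}{0}{\gamma} = 1$, such that $\|\bar{\bm{u}}_n|_{z=0}\|_{L^2} + \|\bm{f}_n\|_{C^0_\mathrm{per}} \to 0$. By Step 1, $1 \le C(R)(\|\bar{\bm{u}}_n\|_{C^0} + o(1))$, so $\|\bar{\bm{u}}_n\|_{C^0} \ge c > 0$ uniformly.

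By Arzelà--Ascoli and periodicity, along a subsequence we have $\eta_n \to \eta$ in $C^{1,\gamma'}_\mathrm{per}$ and $\bar{\bm{u}}_n \to \bar{\bm{u}}$ in $C^{0,\gamma'}_\mathrm{per}$ for any $\gamma' < \gamma$. The limit satisfies $\min \eta \ge -d + R^{-1}$. Because $A(\eta_n) \to A(\eta)$ uniformly, passing to the limit in the weak formulation gives:
\begin{itemize}
\item $\nabla\cdot\bar{\bm{u}} = 0$ and $\nabla \times (A(\eta)\bar{\bm{u}}) = \alpha\bar{\bm{u}}$ weakly;
\item $\bar{\bm{u}} \cdot \bm{n} = 0$ on $\partial\Omega^0$;
\item $\bar{\bm{u}} = 0$ on $z=0$, using the uniform convergence of traces together with the $L^2$ assumption;
\item $\|\bar{\bm{u}}\|_{C^0} \ge c > 0$.
\end{itemize}

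\textbf{Step 3: show the limit vanishes.} Transform back with Lemma~\ref{lem:bound-flattening-trafo}. This gives $\bm{u} = \frakG^{-1}\bar{\bm{u}}$, a $C^{0,\gamma'}$ weak Beltrami field in $\Omega^\eta$ with $\nabla\times\bm{u} = \alpha\bm{u}$, $\nabla\cdot\bm{u} = 0$, and $\bm{u} = 0$ on the free surface $z = \eta$.

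Inside the domain, each component satisfies $-\Delta \bm{u} = \alpha^2 \bm{u}$, so $\bm{u}$ is real-analytic there. I would then argue that the full Cauchy data vanish on the surface. Since $\bm{u}$ vanishes there, its tangential derivatives vanish. The equations $\nabla\times\bm{u} = \alpha\bm{u} = 0$ and $\nabla\cdot\bm{u} = 0$ then force the normal derivative to vanish as well. Unique continuation for the Helmholtz system gives $\bm{u} \equiv 0$. A cleaner route is to extend $\bm{u}$ by zero across the surface: for a field with zero trace, the zero extension remains a weak solution of the first-order system $\nabla\times\bm{u} = \alpha\bm{u}$, $\nabla\cdot\bm{u} = 0$. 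It then vanishes on an open set, and weak unique continuation for $\Delta + \alpha^2$ (first for $H^1_\mathrm{loc}$ solutions, and hence after elliptic regularity) yields $\bm{u} \equiv 0$. This contradicts $\|\bar{\bm{u}}\|_{C^0} \ge c$ and proves the lemma.

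\textbf{Main obstacle.} The delicate step is the rigorous unique-continuation/zero-extension argument at low regularity: the boundary is only $C^{1,\gamma}$ and $\bm{u}$ is only $C^{0,\gamma'}$. I would first check that the zero extension is a distributional solution. This should follow by integrating the weak curl and div identities against test functions supported across the surface, where the boundary terms vanish because the full trace is zero. I would then invoke interior regularity and unique continuation in the extended domain. Smaller points are that the normal to the limit surface is well defined thanks to the $C^{1,\gamma'}$ convergence, and that the constant depends only on $R$, which holds because all sequences are taken within $\curlB^{1,\gamma}_{R^{-1},R}$.
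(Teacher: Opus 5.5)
Your proposal is correct, and its skeleton matches the paper's proof. Both normalise in $C^{0,\gamma}_\mathrm{per}$, extract limits in $C^{0,\gamma'}$ and $C^{1,\gamma'}$, obtain a limiting Beltrami field with zero trace on $z=0$, show that it vanishes, and close with \eqref{est-u_0_gamma}. Using \eqref{est-u_0_gamma} at the start to get $\|\bar{\bm{u}}_n\|_{C^0}\ge c$, rather than at the end, is only a reordering.

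The genuine difference is the uniqueness step. The paper proceeds in four moves:
\begin{enumerate}
\item It asserts that $\bm{u}$ is $C^{1,\gamma}$ up to the surface, via the Dirichlet problem for $-\Delta\bm{u}=\alpha^2\bm{u}$.
\item It uses the surface relations to extract only $\partial_n u_3=0$.
\item It zero-extends the harmonic function $e^{\alpha x_4}u_3$ on a four-dimensional cylinder. Because the operator is second order, this needs both $w=0$ and $\partial_n w=0$.
\item It recovers $(u_1,u_2)\equiv 0$ from the ODE system $\partial_3u_1=\alpha u_2$, $\partial_3u_2=-\alpha u_1$ with zero data at the surface.
\end{enumerate}

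Your ``cleaner route'' exploits that the Beltrami system is \emph{first order}, so only the trace needs to vanish. The zero extension then solves $\nabla\times\bm{u}=\alpha\bm{u}$, $\nabla\cdot\bm{u}=0$ distributionally in the connected set $\{z>-d\}$. Hence it solves $(\Delta+\alpha^2)\bm{u}=0$ there, is real-analytic by analytic hypoellipticity, and vanishes on $\{z>\eta\}$. This needs only continuity of $\bm{u}$ up to the surface. It therefore bypasses the boundary-regularity claim, which the paper invokes without detail for a limit that is a priori only a $C^{0,\gamma'}$ weak solution. It also removes the component computation and the ODE step. What the paper's route buys in exchange is only that it rests on Weyl's lemma for harmonic functions rather than on analyticity of distributional solutions of $\Delta+\alpha^2$.

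To make your zero extension rigorous for a merely continuous field, do not integrate by parts literally. Instead test with $\chi_\varepsilon\bm{\varphi}$, where $\chi_\varepsilon=\chi((\eta(\bm{x}')-z)/\varepsilon)$, $\chi=0$ on $(-\infty,1]$ and $\chi=1$ on $[2,\infty)$. This is a $C^1_c$ test function, which suffices for a first-order weak formulation. The error term $\int\bm{u}\cdot(\nabla\chi_\varepsilon\times\bm{\varphi})$ is bounded by $\sup|\bm{u}|$ over a strip of width $2\varepsilon$ times $\int|\nabla\chi_\varepsilon|=O(1)$, so it tends to zero.

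Your first route is also a tidier version of the paper's computation. With $\bm{u}=0$ on the surface one has $D\bm{u}=\partial_n\bm{u}\otimes\bm{n}$ there. Then $\bm{n}\times\partial_n\bm{u}=\alpha\bm{u}=0$ and $\bm{n}\cdot\partial_n\bm{u}=0$ kill the full Cauchy data of every component at once. However, that route still depends on the $C^1$-up-to-the-boundary regularity, so the zero-extension route is the one to write up.
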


\begin{proof}

We argue by contradiction and assume there are sequences $\eta_n \in \curlB^{1,\gamma}_{R^{-1},R}$  and $\bar{\bm{u}}_n \in C^{1}_\mathrm{per}$ with $\|\bar{\bm{u}}_n\|_{C^{0,\gamma}_{\text{per}}} = 1$ and $\nabla \cdot \bar{\bm{u}}_n = 0$, $\bar{\bm{u}}_n \cdot \bm{n} = 0$ on $\partial \Omega^0$ such that
\[
\frac{1}{n} = \frac{1}{n} \|\bar{\bm{u}}_n\|_{C^{0,\gamma}_{\text{per}}} \ge \|\bar{\bm{u}}_n|_{z=0}\|_{L^2} + \|\nabla \times (A(\eta_n)\bar{\bm{u}}_n) - \alpha \bar{\bm{u}}_n\|_{C^{0}_{\text{per}}} 
\]
for all $n \in \N$.
Hence, the right-hand side tends to 0 as $n \to \infty$. On the other hand, by compact embedding of H\"older spaces, there exist $\bar{\bm{u}} \in C^{0,\gamma}$ and $\eta \in C^{1,\gamma}$ so that, after restricting to an appropriate subsequence,  $\eta_n \to \eta$ in $C^{1,\gamma'}$ and $\bar{\bm{u}}_n \to \bar{\bm{u}}$ in $C^{0,\gamma'}$ for any $\gamma' < \gamma$. 
Specifically, we have that $\bar{\bm{u}}_n$ converges uniformly to $\bar{\bm{u}}$. We then have
\begin{subequations}
    \begin{alignat*}{2}
            \nabla \times (A(\eta)\bar{\bm{u}}) &=\alpha\bar{\bm{u}} \quad &&\text{in } \Omega^0,\\
            \nabla \cdot \bar{\bm{u}} &= 0 &&\text{in } \Omega^0,\\
            \bar{\bm{u}}  &= 0 &&\text{on } z=0, \\
            \bar{\bm{u}} \cdot \bm{n} &= 0 &&\text{on } z=-d,
    \end{alignat*}
\end{subequations}
where the first two lines are interpreted weakly.
We claim that this implies that $\bar{\bm{u}} = 0$. Indeed, going back to the original variables, we get
\begin{subequations}
    \begin{alignat*}{2}
            \nabla \times \bm{u} &=\alpha \bm{u} \quad &&\text{in } \Omega^\eta,\\
            \nabla \cdot \bm{u} &= 0 &&\text{in } \Omega^\eta,\\
            \bm{u}  &= 0 &&\text{on } z=\eta, \\
            \bm{u} \cdot \bm{n} &= 0 &&\text{on } z=0.
    \end{alignat*}
\end{subequations}
By applying another curl and using that $\bm u$ is divergence free, we find that $-\Delta \bm{u}=\alpha^2 \bm{u}$ with $\bm{u}=0$ on the surface. Since $\eta\in C^{1,\gamma}$, it follows that $\bm{u}$ is $C^{1,\gamma}$ up to the surface.
On the other hand, $\partial_3 u_3 = -\partial_1 u_1 - \partial_2 u_2$. Along the surface, we also have
\[
\partial_i u_j + \partial_3 u_j \partial_i \eta = 0,\quad i=1,2,\ j=1,2,3,
\]
as well as
\[
\partial_2 u_3 - \partial_3 u_2 = \partial_3 u_1 - \partial_1 u_3 = \partial_1 u_2 - \partial_2 u_1 = 0
\]
by using that $\bm{u}$ vanishes at the surface.
Combining these relations, we get
\[
\partial_1 u_1 = -\partial_3 u_1 \partial_1 \eta = -\partial_1 u_3 \partial_1 \eta,
\]
as well as
\[
\partial_2 u_2 = -\partial_3 u_2 \partial_2 \eta = -\partial_2 u_3 \partial_2 \eta.
\]
Thus,
\[
\partial_3 u_3 = -\partial_1 u_1 - \partial_2 u_2 = \partial_1 u_3 \partial_1 \eta + \partial_2 u_3 \partial_2 \eta,
\]
in other words,
\[
\partial_n u_3 = 0
\]
on the surface. We claim that this implies that $u_3 \equiv 0$. Indeed, the function $w(\bm{x},x_4) = e^{\alpha x_4} u_3$ is harmonic in the extended domain $\tilde{\Omega}^\eta = \{(\bm{x},x_4): \bm{x} \in \Omega^\eta\} \subset \mathbb{R}^4$ with $w = 0$ and $\partial_n w = 0$ on the boundary component $\{x_3 = \eta\}$. Extending $w$ by zero across the boundary, we find that $w$ is weakly harmonic and therefore strongly harmonic and real analytic across the boundary. Since $w$ vanishes in an open set, it therefore vanishes identically. 
Once we know that $u_3 \equiv 0$, we get $\partial_3 u_1 = \alpha u_2$ and $\partial_3 u_2 = -\alpha u_1$. Since also $(u_1,u_2) = (0,0)$ at the surface, we get by solving this system of ODEs for fixed $(x_1,x_2)$ that $(u_1,u_2) \equiv 0$ in $\Omega^\eta$. Returning to the flattened domain, we thus obtain $\bar{\bm{u}} = 0$, which yields that $\|\bar{\bm{u}}_n\|_{C^0_\mathrm{per}} \to 0$ as $n \to \infty$.
On the other hand, we have the uniform Schauder estimate \eqref{est-u_0_gamma} 
\[
1 = \|\bar{\bm{u}}_n\|_{C^{0,\gamma}_{\text{per}}} \leq C(R) \left( \|\bar{\bm{u}}_n\|_{C^{0}_{\text{per}}} + \|\nabla \times (A(\eta_n)\bar{\bm{u}}_n) - \alpha \bar{\bm{u}}_n\|_{C^{0}_{\text{per}}}  \right) \rightarrow 0.
\]
This is a contradiction, hence the estimate must hold.
\end{proof}

We now have all required tools to obtain our main result, the global bifurcation of doubly periodic waves on Beltrami flows, which specifically proves the existence of genuinely three-dimensional, doubly periodic waves outside of a perturbative regime close to the laminar flows.

\begin{theorem}\label{thm:global-bif-Beltrami}
    Let $k \geq 0$ and let $\Sigma$ be the set of doubly periodic solutions $(\bm{u},\eta,\bm{c}) \in C^{k+1,\gamma}_\mathrm{per}(\Omega^\eta;\R^3) \times C^{k+3,\gamma}_{\mathrm{per}}(\R^2;\R) \times \R^2$ to \eqref{eq:formulation}, equipped with the $C^{k+1, \gamma}_\mathrm{per}\times C^{k+2,\gamma}_\mathrm{per}\times \R^2$ topology. Then, under the assumptions of Theorem \ref{thm:local bifurcation}, the following holds. There is a connected subset $\mathcal{D}$ of $\Sigma$, which contains $(\bm{U}(\bm{c}^*),0,\bm{c}^*)$ and is the union of curves $\mathcal{D}(\hat{\bm{s}}) \coloneqq \{(\bm{u}_{\hat{\bm{s}}}(t),\eta_{\hat{\bm{s}}}(t),\bm{c}_{\hat{\bm{s}}}(t)) \in \mathcal{D} : t \in [0,\infty)\}$ with $\hat{\bm{s}} \in S^1 $ satisfying $\hat{s}_1\hat{s}_2 \neq 0$.
    Each curve $\mathcal{D}(\hat{\bm{s}})$ admits a local analytic re-parameterisation and at least one of the following alternatives occurs:
    \begin{enumerate}
        \item\label{alt:D-unbounded} $\mathcal{D}(\hat{\bm{s}})$ is unbounded in the sense that there exists a sequence $(\bm{u}_n,\eta_n,\bm{c}_n)_{n \in \N}$ such that either
        \begin{enumerate}
            \item\label{alt:blow-up-eta} $\|\nabla \eta_n\|_{C^{0,\gamma}_\mathrm{per}} \rightarrow \infty$, or 
            \item\label{alt:blow-up-c} $|\bm{c}_n| \rightarrow \infty$
        \end{enumerate}
        as $n \rightarrow \infty$;
        \item\label{alt:D-selfintersect} $\mathcal{D}(\hat{\bm{s}})$ contains a sequence $(\bm{u}_n,\eta_n,\bm{c}_n)_{n \in \N}$ such that $\min \eta_n \rightarrow -d$, that is, an intersection of the surface profile with the flat bed occurs;
        \item\label{alt:D-loop} $\mathcal{D}(\hat{\bm{s}})$ is a closed loop, that is, there exists a $T > 0$ such that $(\bm{u}(T),\eta(T),\bm{c}(T)) = (\bm{U}(\bm{c}^*),0,\bm{c}^*)$.
    \end{enumerate}
    Finally, the solutions on the curve $\mathcal{D}(\hat{s})$ are genuinely three-dimensional, doubly periodic waves, except for a discrete set of $t \in [0,\infty)$ with $r(t) = 0$.
\end{theorem}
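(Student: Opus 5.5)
The plan is to transfer Theorem \ref{thm:global-bifurcation} back to the physical variables and then sharpen alternative \ref{global-alternative-1}. For each admissible $\hat{\bm{s}}$ we take the curve $\mathfrak{R}(\hat{\bm{s}})$ and set $\Phi(t)=(\bm{U}(\bm{c}(t)),0)+r(t)\hat s_1\Phi_1+r(t)\hat s_2\Phi_2+\tilde\Phi(t)=(\bar{\bm{u}}(t),\eta(t))$ and $\bm{u}(t)=\frakG^{-1}\bar{\bm{u}}(t)$. By Lemma \ref{lem:bound-flattening-trafo} and the equivalence of \eqref{eq:formulation} with $\curlF=0$, this yields solutions of \eqref{eq:formulation}. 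The fixed-point identities $\bar{\bm{u}}=\bar\curlL(\cdot)$ and $\eta=\bar\curlK(\cdot)$, together with Theorems \ref{thm:solution-flattened-div-curl} and \ref{thm:dynBoundaryCond}, give the gain of regularity to $C^{k+1,\gamma}\times C^{k+3,\gamma}$. Continuity and the local analytic reparameterisation carry over from $\mathfrak{R}(\hat{\bm{s}})$. We then define $\mathcal{D}$ as the union of the $\mathcal{D}(\hat{\bm{s}})$. It is connected because each curve is connected and contains $(\bm{U}(\bm{c}^*),0,\bm{c}^*)$ at $t=0$. Alternative \ref{global-alternative-2} translates directly into \ref{alt:D-loop}.

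The three-dimensionality claim follows from Remark \ref{rem:interpretation-of-hat-s}. The Fourier coefficients of $\cos(\bm{k}_j\cdot\bm{x}')$ in $\eta(t)$ are $r(t)\hat s_j$, both nonzero when $r(t)\neq0$. Since $r(t)$ is locally analytic in the reparameterisation and not identically zero (it equals $t$ near $0$), its zeros are isolated. Hence the exceptional set of $t$ is discrete.

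The main work is to refine \ref{global-alternative-1}. If the curve leaves every $\tilde\curlU_\delta$, we obtain a sequence $t_n$ with $\min\eta_n\to-d$, which is \ref{alt:D-selfintersect}. Otherwise $\min\eta(t)\ge -d+\delta_0$ along the whole curve, and $\|\tilde\Phi\|_\curlX+|\bm c|+|r|$ must be unbounded. We argue by contradiction: suppose $\|\nabla\eta_n\|_{C^{0,\gamma}}$ and $|\bm c_n|$ stay bounded. Then $\|\eta_n\|_{C^{1,\gamma}}$ is bounded; the sup bound comes from the Bernoulli/maximum principle or simply from $\eta>-d$ together with a mean-value argument (the mean of $\eta$ is controlled via the $\cos$-free structure). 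This places $\eta_n\in\curlB^{1,\gamma}_{R^{-1},R}$.

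The central step is a bound on $\bar{\bm u}_n$ in $C^{0,\gamma}$. By Lemma \ref{est-of-u-in-eta} with $\nabla\times(A\bar{\bm u})=\alpha\bar{\bm u}$, we have $\|\bar{\bm u}_n\|_{C^{0,\gamma}}\le C(R)\|\bar{\bm u}_n|_{z=0}\|_{L^2}$. To control the right-hand side, integrate the Bernoulli equation \eqref{eq:dyn-bc-flattened} over a periodic cell. The curvature term is a divergence and integrates to zero, so
\[
\int_{B_{00}}\tfrac12 B(\bar{\bm u}_n,\eta_n)\,\de x\,\de y = |B_{00}|\,Q(\bm c_n)-g\int_{B_{00}}\eta_n\,\de x\,\de y .
\]
The factor $\det(D\frakF)^{-2}|D\frakF\bar{\bm u}|^2$ is bounded below by $c(R)|\bar{\bm u}|^2$, since $D\frakF$ is controlled by the $C^1$ norm of $\eta$ and $\det D\frakF\ge\delta/d$. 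This yields an $L^2$ surface bound, and hence $\bar{\bm u}_n$ is bounded in $C^{0,\gamma}$.

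The last step bootstraps to a contradiction. The estimate in Theorem \ref{thm:dynBoundaryCond} with $k=0$ and $\bm h=\nabla\eta_n$ gives $\eta_n$ bounded in $C^{2,\gamma}$. Then Theorem \ref{thm:solution-flattened-div-curl} gives $\bar{\bm u}_n$ bounded in $C^{1,\gamma}$, with $|\bm M|$ bounded by $\eta_n$ and $\bm c_n$. Iterating raises the regularity until $\bar{\bm u}_n\in C^{k,\gamma}$ and $\eta_n\in C^{k+2,\gamma}$ are bounded. This also bounds $|r_n|$, since $r_n\hat s_j$ is the Fourier coefficient of $\eta_n$, and it bounds $\tilde\Phi_n$. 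That contradicts unboundedness, which gives \ref{alt:D-unbounded}.

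I expect the main obstacle to be the Bernoulli $L^2$ step: lower-bounding $B$ uniformly and making sure $\int\eta_n$ and $\min\eta_n$ are genuinely controlled. The remaining bootstrap is routine.
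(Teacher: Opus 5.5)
Your proposal is correct and follows essentially the paper's route: transfer $\mathfrak{R}(\hat{\bm s})$ back via the inverse flattening, and send the case where $\min\eta$ approaches $-d$ to alternative \ref{alt:D-selfintersect}. Otherwise you combine the Schauder bootstrap for $\bar{\curlL}$ and $\curlK$, the integrated Bernoulli equation with Lemma \ref{est-of-u-in-eta}, and the maximum-principle bound $\eta\le Q(\bm c)/g$; you merely phrase this as a contrapositive (bounded $\nabla\eta$ and $\bm c$ $\Rightarrow$ bounded branch) rather than propagating blow-up downward as the paper does. The only slip is your aside that the mean of $\eta$ is controlled by the `$\cos$-free structure', which is false since orthogonality to $\cos(\bm k_i\cdot\bm x')$ says nothing about the zero mode; the maximum-principle argument you list first, or the integrated Bernoulli identity $g\int_{B_{00}}\eta \le |B_{00}|\,Q(\bm c)$, supplies the needed upper bound instead.
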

\begin{remark}
    We specifically point out that the blow-up of the fluid velocity field $\bm{u}$ cannot occur as an isolated alternative. 
\end{remark}
\begin{proof}
    Fix $\hat{\bm{s}} \in S^1$ with $\hat{s}_1 \hat{s}_2 \neq 0$. We recall that any solution $(\tilde{\Phi},\bm{c},r) \in \tilde{\mathcal{U}}(\hat{\bm{s}})$ to $\curlG(\tilde{\Phi},\bm{c},r) = 0$ yields a solution to $\mathcal{F}(\Phi,\bm{c}) = 0$ via $\Phi = (\bar{\bm{u}},\eta) = (\bm{U}(\bm{c}),0) + r\hat{s}_1 \Phi_1 + r\hat{s}_2 \Phi_2 + \tilde{\Phi}$, which gives a solution $(\bar{\bm{u}},\eta,\bm{c}) \in C^{k,\gamma}_\mathrm{per}(\overline{\Omega^0};\R^3) \times C^{k+2,\gamma}_\mathrm{per}(\R^2;\R) \times \R^2$ to the flattened system \eqref{eq:formulation-flat} and \eqref{eq:dyn-bc-flattened}. If $\eta > -d$, we can then apply the inverse flattening transform, see Lemma \ref{lem:bound-flattening-trafo}, to obtain a solution to the Beltrami-flow problem \eqref{eq:formulation}, which lies in $C^{k,\gamma}_\mathrm{per}(\overline{\Omega^\eta};\R^3) \times C^{k+2,\gamma}_{\mathrm{per}}(\R^2;\R) \times \R^2$. Therefore, the global bifurcation curve $\mathfrak{R}(\hat{\bm{s}})$ obtained in Theorem \ref{thm:global-bifurcation} directly translates into a curve $\mathcal{D}(\hat{\bm{s}})$ of solutions to the Beltrami-flow problem. Following Remark \ref{rem:interpretation-of-hat-s}, the solutions on $\mathcal{D}(\hat{s})$ are genuinely three-dimensional surface waves unless $r(t) = 0$. Since the curve $\mathcal{D}(\hat{s})$ has a local analytic re-parameterisation, this can only occur at isolated points $t$. We then obtain the connected subset $\mathcal{D}$ by taking the union of the $\mathcal{D}(\hat{\bm{s}})$ over all $\hat{\bm{s}} \in S^1$ with $\hat{s}_1 \hat{s}_2 \neq 0$.

    We now prove the refined set of alternatives. For this, we note that alternative \ref{global-alternative-2} in Theorem \ref{thm:global-bifurcation} directly translates to alternative \ref{alt:D-loop} above. Therefore, it remains to show that alternative \ref{global-alternative-1} in Theorem \ref{thm:global-bifurcation} implies alternatives \ref{alt:D-unbounded} or \ref{alt:D-selfintersect} above. Thus, assume that $\mathfrak{R}(\hat{\bm{s}})$ satisfies alternative \ref{global-alternative-1}. Then, there is a sequence $(\tilde{\Phi}_n,\bm{c}_n,r_n)_{n \in \N}$ such that $\|\tilde{\Phi}_n\|_\curlX + |\bm{c}_n| + |r_n| \rightarrow \infty$ or for all $\delta \in (0,d)$ there exists an $n_\delta$ such that $(\tilde{\Phi}_n,\bm{c}_n,r_n) \notin \tilde{\curlU}_\delta(\hat{\bm{s}})$ for all $n > n_\delta$. We show that the first case yields alternative \ref{alt:D-unbounded} above, and the second case yields alternative \ref{alt:D-selfintersect} above.

    We start with the second case that $(\tilde{\Phi}_n,\bm{c}_n,r_n)_{n \in \N}$ eventually leaves all $\tilde{\curlU}_\delta$. Then the corresponding sequence $((\bar{\bm{u}}_n,\eta_n),\bm{c}_n)$ of solutions to $\mathcal{F}((\bar{\bm{u}},\eta),\bm{c}) = 0$ eventually leaves all $\curlU_\delta$, see \eqref{eq:Udelta}, and therefore the sequence $\min \eta_n \rightarrow -d$ as $n \rightarrow \infty$. Applying the inverse flattening transform, we thus find that alternative \ref{alt:D-selfintersect} above holds.

    Therefore, it remains to consider the first case that $\|\tilde{\Phi}_n\|_\curlX + |\bm{c}_n| + |r_n| \rightarrow \infty$ as $n \rightarrow \infty$. In this case, we first show that the corresponding sequence $(\Phi_n,\bm{c}_n)$ of solutions to $\mathcal{F}(\Phi_n,\bm{c}_n) = 0$, given by
    \begin{equation*}
        \Phi_n = (\bm{U}(\bm{c}_n),0) + r_n(\hat{s}_1 \Phi_1 + \hat{s}_2 \Phi_2) + \tilde{\Phi}_n,
    \end{equation*}
    satisfies $\|\Phi_n\|_\curlX + |\bm{c}_n| \rightarrow \infty$. If $|\bm{c}_n| \rightarrow \infty$, there is nothing to show. If $|\bm{c}_n|$ is uniformly bounded, we obtain a uniform bound on the laminar flows $\bm{U}(\bm{c}_n)$. In addition, using that $\curlX$ can be written as the direct sum $\curlX = \curlX_1 \oplus \curlX_2 \oplus \tilde{\curlX}$ with $\curlX_i = \operatorname{span}\{\Phi_i\}$ for $i = 1,2$ and the fact that $\tilde{\Phi}_n \in \tilde{\curlX}$ for all $n \in \N$ we obtain the estimate
    \begin{equation*}
        \|\tilde{\Phi}_n\|_\curlX + |r_n| \lesssim \|\tilde{\Phi}_n + r_n(\hat{s}_1 \Phi_1 + \hat{s}_2 \Phi_2)\|_\curlX = \|\Phi_n - (\bm{U}(\bm{c}_n),0)\|_{\curlX} \leq \|\Phi_n\|_\curlX + \|(\bm{U}(\bm{c}_n),0)\|_\curlX
    \end{equation*}
    from the continuity of the projections from $\curlX$ onto the $\curlX_i$. Then, subtracting $\|(\bm{U}(\bm{c}_n),0)\|_\curlX$, which is uniformly bounded, we indeed find $\|\Phi_n\|_\curlX \rightarrow \infty$ as $n \rightarrow \infty$.

    Assuming that neither alternative \ref{alt:blow-up-c}, \ref{alt:D-selfintersect}, nor \ref{alt:D-loop} occurs, we now show that $\|\Phi_n\|_\curlX \rightarrow \infty$ implies alternative \ref{alt:blow-up-eta}, that is, $\|\nabla \eta_n\|_{C^{0,\gamma}_\mathrm{per}} \rightarrow \infty$, where $\Phi_n = (\bar{\bm{u}}_n,\eta_n)$. For this, we use that since $\mathcal{F}(\Phi_n,\bm{c}_n) = 0$ it holds that
    \begin{equation*}
        \begin{split}
            \bar{\bm{u}}_n &= \bar{\mathcal{L}}(\alpha \bar{\bm{u}}_n,\eta_n,  \bm{M}(\eta_n,\bm{c}_n)), \\
            \eta_n &= \bar{\mathcal{K}}(\bar{\bm{u}}_n, \eta_n,\bm{c}_n) = \mathcal{K}\left(\nabla \eta_n, Q(\bm{c}_n) - \dfrac{1}{2}B\left(\bar{\mathcal{L}}(\alpha \bar{\bm{u}}_n,\eta_n,  \bm{M}(\eta_n,\bm{c}_n)),\eta_n\right)\right),
        \end{split}
    \end{equation*}
    with $Q(\bm{c}) = \tfrac{1}{2}(c_1^2 + c_2^2)$ and $\bm{M}(\eta,\bm{c}) = (\int_{\Omega_{00}^\eta} U_1(\bm{c}) \,dV, \int_{\Omega_{00}^\eta} U_2(\bm{c}) \,dV)$. Since alternative \ref{alt:blow-up-c} does not occur by assumption, $|\bm{c}_n|$ is uniformly bounded and thus, $|Q(\bm{c}_n)|$ is uniformly bounded and $|\bm{M}(\eta_n,\bm{c}_n)| \lesssim \|\eta_n\|_{L^\infty}$. Next, we use the regularity estimate \eqref{eq:schauder-without-lower-order-term} to obtain, for any $0 \leq \ell \leq k-1$, the estimates
    \begin{equation}\label{eq:reg-est-global-bif-u}
        \|\bar{\bm{u}}_n\|_{C^{\ell+1,\gamma}_\mathrm{per}} \leq C_{dc}\left(\max(\eta_n+d)^{-1},\|\eta_n\|_{C^{\ell+2,\gamma}_\mathrm{per}}\right)\left(|\bm{M}(\eta_n,\bm{c}_n)| + \alpha \|\bar{\bm{u}}_n\|_{C^{\ell,\gamma}_\mathrm{per}}\right).
    \end{equation}
    Note that we replaced the generic constant $C(R)$ in \eqref{eq:schauder-without-lower-order-term} by a specific constant $C_{dc}$, which depends on $\max(\eta_n+d)^{-1}$ and $\|\eta_n\|_{C^{\ell+2,\gamma}_\mathrm{per}}$, which follows directly from the proof of \eqref{eq:schauder-without-lower-order-term}.
    Additionally, combining \eqref{eq:schauder-est-dyn-bc} and \eqref{eq:est-B-dyn-bc}, yields the estimate
    \begin{equation}\label{eq:reg-est-global-bif-eta}
        \begin{split}
            \|\eta_n\|_{C^{\ell'+2,\gamma}_\mathrm{per}} &\leq C_{db}(\|\nabla\eta_n\|_{C^{\ell',\gamma}_\mathrm{per}}) \left\|Q(\bm{c}_n) - \dfrac{1}{2}B\left(\bar{\mathcal{L}}(\alpha \bar{\bm{u}}_n,\eta_n,  \bm{M}(\eta_n,\bm{c}_n)),\eta_n\right)\right\|_{C^{\ell',\gamma}_\mathrm{per}} \\
            &\leq C_{db}(\|\nabla\eta_n\|_{C^{\ell',\gamma}_\mathrm{per}}) \Bigg(|Q(\bm{c}_n)| \\
            &\qquad \qquad + \dfrac{1}{2} C_B\left(\max(\eta_n+d)^{-1},\|\eta_n\|_{C^{\ell'+1,\gamma}_\mathrm{per}}\right) \|\bar{\mathcal{L}}(\alpha \bar{\bm{u}}_n,\eta_n,  \bm{M}(\eta_n,\bm{c}_n))\|_{C^{\ell',\gamma}}^2\Bigg) \\
            &\leq C_{db}(\|\nabla\eta_n\|_{C^{\ell',\gamma}_\mathrm{per}}) \left(|Q(\bm{c}_n)| + \dfrac{1}{2} C_B\left(\max(\eta_n+d)^{-1},\|\eta_n\|_{C^{\ell'+1,\gamma}_\mathrm{per}}\right) \|\bar{\bm{u}}_n\|_{C^{\ell',\gamma}_\mathrm{per}}^2\right)
        \end{split}
    \end{equation}
    for any $0 \leq \ell' \leq k$. Again, we replace the generic constants $C(R)$ in \eqref{eq:est-B-dyn-bc} and \eqref{eq:schauder-est-dyn-bc} by specific constants $C_{B}$ and $C_{db}$, respectively, to more precisely capture the blow-up behaviour in $\eta_n$. This shows that if $\|\bar{\bm{u}}_n\|_{C^{\ell+1,\gamma}_\mathrm{per}} + \|\eta_n\|_{C^{\ell+3,\gamma}_\mathrm{per}} \rightarrow \infty$ then also $\|\bar{\bm{u}}_n\|_{C^{\ell,\gamma}_\mathrm{per}} + \|\eta_n\|_{C^{\ell+2,\gamma}_\mathrm{per}} \rightarrow \infty$ for $0 \leq \ell \leq k-1$. Indeed, if $\|\bar{\bm{u}}_n\|_{C^{\ell + 1,\gamma}_\mathrm{per}} \rightarrow \infty$, then by \eqref{eq:reg-est-global-bif-u}, we obtain $\|\bar{\bm{u}}_n\|_{C^{\ell,\gamma}_\mathrm{per}} + \|\eta_n\|_{C^{\ell+2,\gamma}_\mathrm{per}} \rightarrow \infty$. On the other hand, if $\|\eta\|_{C^{\ell+3,\gamma}_\mathrm{per}} \rightarrow \infty$, we can apply \eqref{eq:reg-est-global-bif-eta} with $\ell' = \ell+1$ in combination with \eqref{eq:reg-est-global-bif-u} to find that $\|\bar{\bm{u}}_n\|_{C^{\ell,\gamma}_\mathrm{per}} + \|\eta_n\|_{C^{\ell+2,\gamma}_\mathrm{per}} \rightarrow \infty$. A bootstrapping argument then directly shows that $\|\Phi_n\|_\curlX \rightarrow \infty$ implies $\|\bar{\bm{u}}_n\|_{C^{0,\gamma}_\mathrm{per}} + \|\eta_n\|_{C^{2,\gamma}_\mathrm{per}} \rightarrow \infty$. Using \eqref{eq:reg-est-global-bif-eta} again with  $\ell'=0$ then leads to $\|\bar{\bm{u}}_n\|_{C^{0,\gamma}_\mathrm{per}} + \|\eta_n\|_{C^{1,\gamma}_\mathrm{per}} \rightarrow \infty$.

We next show that blow-up of $\|\bar{\bm{u}}_n\|_{C^{0,\gamma}_\mathrm{per}}$ implies blow-up of $\|\eta_n\|_{C^{1,\gamma}_\mathrm{per}}$ and therefore, the blow-up of $\bm{u}_n$ can be removed as an isolated alternative. For this, we note that the Bernoulli equation yields that
\[
\frac{1}{2} \int_{B_{00}} B(\bar{\bm{u}}, \eta) \, d\bm{x}' = \int_{B_{00}} (Q(\bm{c}) - g\eta) \, d\bm{x}',
\]
which means that $\|\bar{\bm{u}}_n|_{z=0}\|_{L^2}$ is uniformly bounded if $\|\eta_n\|_{C^1}$ and $\bm{c}_n$ are bounded. Therefore, by  Lemma \ref{est-of-u-in-eta}, we conclude that
\begin{equation*}
    \|\bar{\bm{u}}_n\|_{C^{0,\gamma}_\mathrm{per}}  \rightarrow \infty \implies \|\eta_n\|_{C^{1,\gamma}_\mathrm{per}}  \rightarrow \infty.
\end{equation*}

Finally, we show that $\|\eta_n\|_{C^0_\mathrm{per}}$ is bounded, so that blow-up of $\|\eta_n\|_{C^{1,\gamma}_\mathrm{per}}$ implies blow-up of $\|\nabla\eta_n\|_{C^{0,\gamma}_\mathrm{per}}$.
Indeed, clearly $\eta_n>-d$ for all $n$. On the other hand, at a maximum of $\eta$, we have that $K_M \le 0$, so that
\[
g\eta = Q(\bm{c})-\frac12 |\bm u|^2+2\sigma K_M \le Q(\bm{c}).
\]
Hence, $\eta_n \le Q(\bm c_n)/g$ is also bounded from above if $|\bm c_n|$ is uniformly bounded.
This shows that $\|\Phi_n\|_\curlX \rightarrow \infty$ indeed implies alternative \ref{alt:blow-up-eta} and thus completes the proof of the refined alternatives.
\end{proof}

\section{Discussion}\label{sec:discussion}

In this paper, we constructed a global family of genuinely three-dimensional gravity-capillary water waves on Beltrami flows. We obtained this family by reformulating the steady water wave problem as a bifurcation problem of the form `identity plus compact' with a two-dimensional kernel. Here, only a combination of both kernel elements yields true three-dimensional waves. We established the global continuation of the local bifurcation result, and we obtained that the global continuation breaks down if either the surface gradient or the wave velocity blows up, the surface intersects the flat bottom, or the bifurcation curve is a closed loop.

We now discuss related open questions and potential future directions.

\paragraph{Blow-up of the surface gradient in $C^0$.} In Theorem \ref{thm:global-bif-Beltrami}, we establish the blow-up of the surface gradient $\nabla \eta$ in $C^{0,\gamma}$ as an alternative in the global bifurcation result. This is somewhat unsatisfactory, as one would hope to obtain a blow-up in $C^0$, which would allow for a direct physical interpretation of this alternative, in the sense that the wave starts to `overturn'. The main technical issue currently preventing such an extension is that the Schauder estimates we use for the sharpened blow-up alternative have constants that depend on the Hölder norm of $\nabla \eta$ rather than the simple $C^0$-norm. Moreover, the proof of Lemma \ref{est-of-u-in-eta} to bound the velocity field by its $L^2$-norm at the surface crucially relies on $\eta \in C^{1,\gamma}$ to obtain compactness and close a contradiction argument. Therefore, control of just $\|\nabla \eta\|_{C^0}$ is insufficient to prevent the blow-up of higher derivatives and the velocity field. Closing this gap seems genuinely difficult in the present setting. In comparison, Nguyen managed to reduce the blow-up alternative to the $C^1$ norm of the surface profile $\eta$ in the global continuation analysis of gravity-capillary waves for Darcy flow in \cite{nguyen2026-03Nonlinearity}. This was accomplished using a sharp $L^p \to W^{1,p}$ estimate of the Neumann--Dirichlet operator $G[\eta]^{-1}$ for $1<p<2+\epsilon$ by Dahlberg and Kenig (see \cite[Theorem 4.11]{nguyen2026-03Nonlinearity} and \cite{dahlberg1987-05AnnofMath}) depending on $\|\eta\|_{C^1}$; where $\varepsilon$ could be arbitrarily close to $0$ depending on the domain (and thus, on $\eta$). Translated to our setting, one could use this estimate in the irrotational case $\alpha =0$ to bound the trace of the velocity field on the surface. However, in \cite{nguyen2026-03Nonlinearity} the quantity which appears in the dynamic boundary condition has the simple form $G[\eta]^{-1}\partial_x \eta$, and has bounded H\"older norm by Morrey's inequality. This results in a H\"older estimate of $D^2 \eta$ by inverting the mean curvature operator. In our setting, on the other hand, the dynamic boundary condition contains squares of derivatives of $G[\eta]^{-1}\partial_x \eta$ which end up in $L^{\frac{p}{2}}$, with $p/2 <1+\epsilon/2$, by the Dahlberg--Kenig estimate. It is therefore not clear how to use the dynamic boundary condition to upgrade this to a H\"older estimate of the gradient of $\eta$, since even if the mean curvature bound yields a bound on $\eta$ in $W^{2,p/2}$, it is not enough to directly conclude a Hölder estimate unless $\varepsilon > 2$ and thus $p/2 > 2$.

\paragraph{Large-amplitude overhanging waves.} One of the main assumptions in our problem formulation is that the surface is a graph. This inherently prevents the description of overhanging waves, that is, waves in which some horizontal coordinates correspond to more than one surface point. In two dimensions, there exists an explicit family of periodic capillary waves in infinite depth, called Crapper waves \cite{crapper1957-08JFluidMech}, which includes overhanging waves; see also \cite{kinnersley1976-09JFluidMech} for a corresponding family in finite depth, and \cite{akers2014} for a family of gravity perturbed Crapper waves. In the case of vorticity, it has recently been shown that periodic and solitary overhanging waves can arise for constant vorticity \cite{hur2022,davila2026-05Inventmath}. Furthermore, there are recent two-dimensional global bifurcation results for water waves with vorticity which explicitly allow for overhanging wave profiles (see, e.g., \cite{constantin2016, haziot2023-04ArchRationMechAnal, wahlen2023, wahlen2024}). These results do not carry over to the three-dimensional case, in particular, we recall that there are no non-trivial three-dimensional water waves with constant vorticity \cite{martin2022}. Nevertheless, numerical results suggest that overhanging three-dimensional water waves can be found using a continuation approach based on a dimension-breaking bifurcation from an overhanging two-dimensional wave  \cite{akers2017-01WaveMotion}. However, a rigorous proof of their existence remains an open question.

\section*{Acknowledgments}
We thank Max Engelstein for helpful discussions on Appendix \ref{sec:Schauder estimates}.
All three authors have been supported by the Swedish Research Council (grant no.~2020-00440). 
BH was also partially supported by the Deutsche Forschungsgemeinschaft (DFG, German Research Foundation) -- Project-IDs 444753754 and 543917644.
All three authors were supported by the Swedish Research Council under grant no.~2021-06594 while in residence at Institut Mittag-Leffler in Djursholm, Sweden, during the fall semester of 2023. All three authors also acknowledge funding from the Deutsche Forschungsgemeinschaft (DFG, German Research Foundation) -- project number 545145736, to facilitate in-person discussions.

\appendix
\section{Technical results}\label{sec-app:technical-result}

In this appendix, we prove some technical results about periodic solutions to the homogeneous div-curl problem
\begin{equation}\label{eq:div-curl-homogeneous}
	\begin{alignedat}{2}
         	\nabla \times \bm{v} &= 0 \quad &&\text{in } \Omega^\eta, \\
         	\nabla \cdot \bm{v} &= 0 &&\text{in } \Omega^\eta, \\
         	\bm{v} \cdot \bm{n} &= 0 &&\text{on } \partial\Omega^\eta, 
	\end{alignedat}
 \end{equation}
 that are needed in our analysis in Section \ref{sec:div-curl-problem}. We assume that $\eta \in \curlB^{k+2,\gamma}$ and consider  solutions $\bm v\in C^{k+1,\gamma}_\mathrm{per}$ with $k\ge 0$.  Note that any  solution is necessarily of the form
 \[
 \bm{v}=\nabla( b_1 x+b_2 y - \phi)=b_1\bm{e}_1+b_2 \bm{e}_2 -\nabla \phi,
 \]
where $\phi =\phi(\bm{b}) \in \Holderspace{k+2}$, $\bm{b}=(b_1, b_2)$, is the unique (up to additive constants) solution of
\begin{alignat*}{2}
        \Delta \phi &= 0 && \text{ in } \Omega^\eta, \\
        \partial_n \phi &= (b_1 \bm{e}_1 + b_2 \bm{e}_2) \cdot \bm{n}\quad && \text{ on } \partial \Omega^\eta.
\end{alignat*}
Thus, the space of solutions to \eqref{eq:div-curl-homogeneous} is two-dimensional and parameterised by $\bm{b}$. We shall now show that it can also be uniquely specified by imposing integral constraints.

\begin{proposition}\label{prop:div-curl-isomorphism}
    Let $\bm{\lambda}_1, \bm{\lambda}_2 \in \R^2$ be linearly independent. Then for all $\bm{M} = (M_1,M_2) \in \R^2$ the system \eqref{eq:div-curl-homogeneous} has a unique solution $\bm{v} \in C^{k+1,\gamma}_\mathrm{per, div, T}$ satisfying
    \begin{equation}\label{eq:div-curl-mean-fixed}
            \int_{\Omega_{00}^\eta} v_j \,\de V  = M_j  \quad \text{for } j = 1,2.
    \end{equation} 
\end{proposition}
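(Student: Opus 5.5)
Since the solution space of \eqref{eq:div-curl-homogeneous} is the two-dimensional space $\{\bm{v}(\bm{b}) = b_1\bm{e}_1 + b_2\bm{e}_2 - \nabla\phi(\bm{b}) : \bm{b}\in\R^2\}$, and $\bm{b}\mapsto\bm{v}(\bm{b})$ is linear, the statement reduces to showing that the linear map
\[
T\colon \R^2\to\R^2,\qquad T\bm{b} = \Bigl(\int_{\Omega_{00}^\eta} v_1(\bm{b})\,\de V,\ \int_{\Omega_{00}^\eta} v_2(\bm{b})\,\de V\Bigr)
\]
is bijective. Because $T$ maps $\R^2$ to $\R^2$, it suffices to prove injectivity. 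Existence then follows with $\bm{v}=\bm{v}(T^{-1}\bm{M})$, which lies in $C^{k+1,\gamma}_\mathrm{per,div,T}$ by the regularity of $\phi$. Uniqueness also follows, since any solution has the form $\bm{v}(\bm{b})$. Note first that $\bm{v}(\bm{b})$ is independent of the additive constant in $\phi$, so $T$ is well defined.

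For injectivity, I would use an energy identity relating $T\bm{b}$ to the Dirichlet energy of $\bm{v}$. Write $\bm{v}=\nabla\psi$ with $\psi = b_1x+b_2y-\phi$. The function $\psi$ is not periodic, but $\psi(\bm{x}'+\bm{\lambda},z)-\psi(\bm{x}',z) = \bm{b}\cdot\bm{\lambda}$ for $\bm{\lambda}\in\Lambda$. Integrate $|\bm{v}|^2 = \nabla\psi\cdot\bm{v}$ over the periodic cell $\Omega_{00}^\eta$ and apply the divergence theorem, using $\nabla\cdot\bm{v}=0$:
\[
\int_{\Omega_{00}^\eta}|\bm{v}|^2\,\de V = \int_{\partial\Omega_{00}^\eta}\psi\,\bm{v}\cdot\bm{n}\,\de S .
\]
On the top and bottom boundaries $\bm{v}\cdot\bm{n}=0$. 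The lateral faces come in opposite pairs translated by $\bm{\lambda}_1$ or $\bm{\lambda}_2$. On each pair, $\bm{v}$ is periodic and the normals are opposite, so the contribution reduces to $(\bm{b}\cdot\bm{\lambda}_i)\,\Phi_i$, where $\Phi_i$ is the flux of $\bm{v}$ through the face. I then need to express these fluxes in terms of the volume integrals $\int v_j\,\de V$. To do this, apply the same divergence identity with $\psi$ replaced by the linear function $x_j$: $\int_{\Omega^\eta_{00}} v_j\,\de V = \int_{\partial\Omega_{00}^\eta} x_j\,\bm{v}\cdot\bm{n}\,\de S$, which again reduces to face fluxes times the components $(\bm{\lambda}_i)_j$. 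This gives $\int v_j = \sum_i (\bm{\lambda}_i)_j\Phi_i$, hence
\[
\int_{\Omega_{00}^\eta}|\bm{v}|^2\,\de V = \sum_i(\bm{b}\cdot\bm{\lambda}_i)\Phi_i = \bm{b}\cdot T\bm{b}.
\]
A quicker route to the same conclusion is to note that $\bm{v}-\bm{b}=-\nabla\phi$ with $\phi$ periodic, so $\int|\bm{v}|^2 = \int\bm{v}\cdot\bm{b} - \int\bm{v}\cdot\nabla\phi$. The last term vanishes after integrating by parts, using periodicity, $\nabla\cdot\bm{v}=0$ and $\bm{v}\cdot\bm{n}=0$. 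This gives directly $\int|\bm{v}|^2=\bm{b}\cdot T\bm{b}$, and I would present this version.

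Hence if $T\bm{b}=0$, then $\bm{v}\equiv0$, so $\nabla\phi=\bm{b}$ is constant. Since $\phi$ is periodic in both horizontal directions, this forces $\bm{b}=0$. Thus $T$ is injective, and therefore an isomorphism. The only delicate point is the integration by parts on the periodic cell when $k=0$, since $\bm{v}\in C^{1,\gamma}$ and $\phi\in C^{2,\gamma}$ up to the boundary. This is sufficient for classical integration by parts, and the lateral boundary terms cancel by periodicity. The argument is otherwise routine, with the main care needed in checking the sign and cancellation of those lateral terms.
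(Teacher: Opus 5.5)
Your proof is correct, but it obtains uniqueness by a different route from the paper's. For $\bm{M}=0$, the paper first uses the flux lemma (Lemma~\ref{lem:div-curl-constant-fluxes}) and Fubini over the cross sections $\Sigma_j(a)$ to turn $\int_{\Omega_{00}^\eta}\bm{v}\cdot\bm{n}_j\,\de V=0$ into vanishing fluxes. It then cites the orthogonality part of the Hodge decomposition (a fluxless harmonic knot vanishes), adapted from bounded domains to the periodic slab. Existence then follows from the same dimension count you use.

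You instead use the explicit representation $\bm{v}=\bm{b}-\nabla\phi$ and a single integration by parts on the periodic cell to get $\int_{\Omega_{00}^\eta}|\bm{v}|^2\,\de V=\bm{b}\cdot T\bm{b}$. This is in effect a self-contained proof of exactly the special case of the Hodge statement the paper cites. Your longer variant, in which the lateral faces act as cutting surfaces and $\bm{b}\cdot\bm{\lambda}_i$ are the periods of the multivalued potential $\psi$, is the standard proof of that orthogonality.

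What your route buys: it does not depend on an external theorem whose adaptation to the unbounded periodic geometry the paper only sketches. It also gives more information: $T$ is the $L^2$ Gram matrix of $\bm{v}(\bm{e}_1),\bm{v}(\bm{e}_2)$, hence symmetric positive definite. What the paper's route buys: it makes explicit that the integral constraint is equivalent to the classical flux normalisation, and it uses the potential representation only for the dimension count.

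Your uniqueness step rests entirely on the claim that every solution has the form $\bm{b}-\nabla\phi$. The paper asserts this without proof, so it deserves one line. The slab is simply connected, so $\bm{v}=\nabla\psi$. Periodicity of $\bm{v}$ then makes $\psi(\cdot+\bm{\lambda})-\psi$ a constant that is additive in $\bm{\lambda}\in\Lambda$, hence equal to $\bm{b}\cdot\bm{\lambda}$ for some $\bm{b}\in\R^2$.
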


We will prove this result by relating the integral conditions to more classical conditions in terms of fluxes.
Define the cross sections
\[
    \begin{split}
        \Sigma_1(a_1) &\coloneqq \{(a_1 \bm{\lambda}_1 + a_2 \bm{\lambda}_2,z) : a_2 \in (0,1), z \in \R\} \cap \Omega^\eta, \\
        \Sigma_2(a_2) &\coloneqq \{(a_1 \bm{\lambda}_1 + a_2 \bm{\lambda}_2,z) : a_1 \in (0,1), z \in \R\} \cap \Omega^\eta. 
    \end{split}
\]
Then we have that
\begin{equation}
    \Omega_{00}^\eta = \bigcup_{a \in (0,1)} \Sigma_j(a)
    \label{eq:union-domain}
\end{equation}
for $j = 1,2$. Furthermore, due to the periodicity of $\eta$, we have $\Sigma_j(1) = \Sigma_j(0)+(\bm{\lambda}_j,0)$. 
The proof of Proposition \ref{prop:div-curl-isomorphism} relies on the following standard result, which is proved by applying the divergence theorem to the vector field $\bm{v}$ in the domain $\Omega_{j, a_1,a_2} \coloneqq \bigcup_{a \in (a_1,a_2)} \Sigma_j(a) \subset \Omega^\eta$.

\begin{lemma}\label{lem:div-curl-constant-fluxes}
    Let $\bm{v} \in \Holderspace{k+1}$ be a solution of \eqref{eq:div-curl-homogeneous}.
    Then, the fluxes
    \[
        f_j(a) \coloneqq \int_{\Sigma_j(a)} \bm{v} \cdot \bm{n} \,\de S
    \]
    are independent of $a$ for $j = 1,2$.
\end{lemma}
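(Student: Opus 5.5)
Fix $j\in\{1,2\}$ and $0\le a_1<a_2\le 1$. The plan is to apply the divergence theorem to $\bm v$ on the slab-like region $\Omega_{j,a_1,a_2}=\bigcup_{a\in(a_1,a_2)}\Sigma_j(a)$. Since $\bm v\in\Holderspace{k+1}$ with $k\ge0$, it is $C^1$ up to the boundary. The region is bounded and Lipschitz: its boundary consists of pieces of the top surface $z=\eta$ and the bed $z=-d$, the two cross sections $\Sigma_j(a_1)$ and $\Sigma_j(a_2)$, and two lateral faces. So the divergence theorem applies. Using $\nabla\cdot\bm v=0$ we obtain
\[
0=\int_{\partial\Omega_{j,a_1,a_2}}\bm v\cdot\bm n\,\de S .
\]

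Next, the contributions of the individual boundary pieces are identified. On the portions lying on $z=\eta$ and $z=-d$ the integrand vanishes by the kinematic condition $\bm v\cdot\bm n=0$. The two cross sections contribute $f_j(a_2)-f_j(a_1)$ once the normal on $\Sigma_j(a)$ is oriented consistently, namely as the fixed horizontal normal to the plane spanned by $(\bm\lambda_{3-j},0)$ and $\bm e_3$, pointing in the direction of increasing $a$.

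The remaining lateral faces correspond to $a_{3-j}=0$ and $a_{3-j}=1$ (with $a_j\in(a_1,a_2)$). These two faces are translates of each other by $(\bm\lambda_{3-j},0)$, because periodicity of $\eta$ makes the region's cross-section there identical. Their outward normals are opposite, and $\bm v$ is $\Lambda$-periodic, so their contributions cancel. This gives $f_j(a_2)=f_j(a_1)$, and hence $f_j$ is constant on $[0,1]$. By periodicity this extends to all $a\in\R$, if that is needed.

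The main (mild) obstacle is bookkeeping. First, one must check that the orientation conventions make the cross-section terms appear with opposite signs. Second, the region is only Lipschitz, with corners where the vertical faces meet the curved top. This is harmless for the divergence theorem with $C^1(\overline\Omega)$ fields. Alternatively, one can avoid geometry entirely: write $f_j(a)$ as an integral in the coordinates $(a_1,a_2,z)$, differentiate under the integral in $a$, and integrate $\nabla\cdot\bm v=0$. The boundary terms in $z$ then vanish by the kinematic condition, and the terms in $a_{3-j}$ cancel by periodicity.
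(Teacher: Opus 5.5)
Your proposal is correct and follows the paper's argument. The paper likewise applies the divergence theorem to $\bm{v}$ on $\Omega_{j,a_1,a_2}=\bigcup_{a\in(a_1,a_2)}\Sigma_j(a)$, and your treatment of the top and bottom terms (kinematic condition), the lateral faces (periodicity) and the orientation of the cross sections supplies details the paper leaves implicit.
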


\begin{proof}[Proof of Proposition \ref{prop:div-curl-isomorphism}]
We first show the uniqueness, that is, we prove that $\bm{v}=0$ if $M_1=M_2=0$.
    Let $\bm{n}_j \in \R^3$ be the outer normal vector on $\Sigma_j(a)$ for $a \in (0,1)$ and $j = 1,2$. Note that $\bm{n}_j$ is independent of $a$. Since by assumption
    \[
        \int_{\Omega_{00}^\eta} v_j \,\de x = 0
    \]
    for $j = 1,2$, this also holds for every linear combination of $v_1$ and $v_2$.
    In particular, since $\bm{n}_j$ has no contribution in the $z$-direction, that is, $n_{j,3} = 0$, it holds that
    \[
        \int_{\Omega_{00}^\eta} \bm{v} \cdot \bm{n}_j \,\de x = 0, \quad j=1,2.
    \]
    Finally, using Lemma \ref{lem:div-curl-constant-fluxes} there exist $\bar{f}_1,\bar{f}_2 \in \R$ such that $f_j(a) = \bar{f}_j$ for $j = 1,2$ and $a \in (0,1)$. Then, by Fubini's theorem and \eqref{eq:union-domain}  we find that
    \[
        0 = \int_{\Omega_{00}^\eta} \bm{v} \cdot \bm{n}_j \,\de x =|\bm{\lambda}_j| \int_0^1 \int_{\Sigma_j(a)} \bm{v} \cdot \bm{n}_j \,\de S \,\de a =|\bm{\lambda}_j|  \int_0^1 f_j(a) \,\de a = |\bm{\lambda}_j| \bar{f}_j.
    \]
    This shows that $\bm{v}$ is a fluxless harmonic knot and must therefore vanish by the Hodge Decomposition Theorem (see e.g.~the orthogonality part of \cite[Proposition 2]{cantarella2002} which can be adapted to the present geometrical setting using \cite[Theorem 2.1]{lokharu2019}). This completes the uniqueness part.

Now consider the linear map 
    \[
        \bm{v} \mapsto (M_1,M_2) \coloneqq \left(\int_{\Omega_{00}^\eta} v_1 \,\de x, \int_{\Omega_{00}^\eta} v_2 \,\de x\right)
    \]
from the two-dimensional space of solutions of \eqref{eq:div-curl-homogeneous} to $\R^2$. By the uniqueness part above, this map has a trivial kernel. Hence, it is an isomorphism. This completes the proof.
\end{proof}

\section{Abstract multi-parameter local bifurcation theory}
\label{sec:abstract local bif}

We follow the general setup in \cite{seth2024}.
Consider the operator
$$ F\colon X \times \R^n \to Y,$$
where $X$ and $Y$ are Banach spaces. We assume the following:
\begin{enumerate}[label=(B\arabic*)]
    \item\label{H1} $F$ is $C^k$ with $k \geq 2$, $k=\infty$ or $k=\omega$ (that is, $F$ is analytic), $F(0,\bm{c})=0$ for all $\bm{c} \in \R^n$ and there is $\bm{c}^*\in \mathbb{R}^n$ such that $ L\coloneqq D_1F[0,\bm{c}^*]  \colon X  \to Y$ is a Fredholm operator of index 0.
    \item\label{H2} The kernel of $L$ is $n$-dimensional and $\ker L=\spn\{x_1,x_2,\ldots,x_n\}$.
    \item\label{H3} The spaces $X$ and $Y$ are decomposed as 
    \[
      X= \left ( \bigoplus^{n}_{i=1} X_i \right) \bigoplus \tilde{X}, \qquad
      Y= \left ( \bigoplus^{n}_{i=1} Y_i \right) \bigoplus \tilde{Y} 
    \]
with $X_i=\spn\{x_i\}$, $Y_i=\spn\{y_i\}$ and $\tilde{Y}=\ran L$.
    \item\label{H4} $Q_i$ and $P_i$ are projections onto $X_i$ and $Y_i$ along $\widehat{X}_i \bigoplus \tilde{X}$ and $\widehat{Y}_i \bigoplus \tilde{Y}$ respectively, where   
    $$\widehat{X}_i =\bigoplus^{n}_{j=1,j \neq i} X_j$$ and likewise for $\widehat{Y}_i$.

    \item\label{H5} The matrix $(\nu_{ij})$ determined by
    \begin{equation}\label{eq:nu-matrix}
        P_iD_1D_{j+1}F[0,\bm{c}^*](x_i,c_j-c^*_j)= \nu_{ij}(c_j-c^*_j)y_i
    \end{equation}
    is invertible.

    \item\label{H6} There exist closed subspaces $\tilde{X}_i \subset \tilde{X}$ and $\tilde{Y}_i \subset \tilde{Y}$ for each $i=1,\ldots,n$ such that 
    $$F(\widehat{X}_i \oplus \tilde{X}_i,\bm{c}) \subseteq \widehat{Y}_i \oplus \tilde{Y}_i,$$
    and 
    $$(I-P_i)D_1F[0,\bm{c}^*]|_{\widehat{X}_i \oplus \tilde{X}_i}\colon \widehat{X}_i \oplus \tilde{X}_i \to \widehat{Y}_i \oplus \tilde{Y}_i$$
    is a Fredholm operator of index $0$ with kernel $\widehat{X}_i$.
\end{enumerate}

\begin{theorem}[{\cite[Thm.~A.1]{seth2024}}]
\label{thm:seth}
Let $F$ be an operator with the above properties. Then there exists an $\epsilon>0$ such that for every $\bm{s} \in B_{\epsilon}(0;\R^n) \subset \R^n$ the equation $F(x,\bm{c})=0$ has a solution $(x(\bm{s}),\bm{c}(\bm{s}))$ with $(x(\cdot), \bm{c}(\cdot)) \in C^{k-1}(B_{\epsilon}(0;\R^n), X \times \mathbb{R}^n )$ and
$$x(\bm{s})=\sum^n_{i=1}s_ix_i+o(|\bm{s}|), \quad \bm{c}(\bm{s})=\bm{c}^*+O(|\bm{s}|).$$
\end{theorem}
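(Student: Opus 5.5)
We would prove Theorem \ref{thm:seth} by a Lyapunov--Schmidt reduction combined with a blow-up (scaling) of the bifurcation equations, in the spirit of \cite{seth2024}. Write $x = \sum_i s_i x_i + \tilde x$ with $\tilde x \in \tilde X$, and let $P = \sum_i P_i$. Since $L|_{\tilde X}\colon \tilde X \to \tilde Y$ is an isomorphism by \ref{H1}--\ref{H3}, the implicit function theorem solves $(I-P)F(\sum_i s_i x_i + \tilde x, \bm{c}) = 0$ for $\tilde x = \psi(\bm{s},\bm{c})$ near $(0,\bm{c}^*)$. Here $\psi$ is $C^k$, $\psi(0,\bm{c}) = 0$ (since $F(0,\bm{c})=0$), and $D_{\bm{s}}\psi(0,\bm{c}^*) = 0$ (since $L x_i = 0$). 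The problem is then reduced to the $n$ scalar equations $\Phi_i(\bm{s},\bm{c}) \coloneqq p_i\bigl(F(\sum_j s_j x_j + \psi(\bm{s},\bm{c}),\bm{c})\bigr) = 0$, where $p_i$ denotes the coordinate of $P_i$ along $y_i$.

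The key step is to divide each equation by $s_i$ rather than by $|\bm{s}|$. The symmetry hypothesis \ref{H6} is exactly what makes this legitimate. If $s_i = 0$, then $\sum_j s_j x_j \in \widehat X_i$, and by uniqueness in the implicit function theorem applied within the invariant subspace $\widehat X_i \oplus \tilde X_i$, we get $\psi(\bm{s},\bm{c}) \in \tilde X_i$. The Fredholm/kernel part of \ref{H6} ensures that the restricted reduction has the same solution, and the invariance $F(\widehat X_i\oplus\tilde X_i,\bm{c})\subseteq \widehat Y_i\oplus\tilde Y_i$ then gives $\Phi_i(\bm{s},\bm{c}) = 0$ whenever $s_i = 0$. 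Consequently $\Phi_i(\bm{s},\bm{c}) = s_i\, G_i(\bm{s},\bm{c})$ with $G_i(\bm{s},\bm{c}) = \int_0^1 \partial_{s_i}\Phi_i(\ldots,t s_i,\ldots,\bm{c})\,\de t$, which is of class $C^{k-1}$.

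It then remains to solve $G(\bm{s},\bm{c}) = 0$ for $\bm{c}$ as a function of $\bm{s}$. We have $G_i(0,\bm{c}) = \partial_{s_i}\Phi_i(0,\bm{c}) = p_i D_1F[0,\bm{c}] x_i$, which vanishes at $\bm{c}^*$ since $Lx_i=0$. Moreover $\partial_{c_j} G_i(0,\bm{c}^*) = p_i D_1 D_{j+1}F[0,\bm{c}^*]x_i = \nu_{ij}$; here the $\psi$-contributions drop out because $\psi$ and $D_{\bm{c}}\psi$ vanish at $\bm{s}=0$. By \ref{H5} the matrix $(\nu_{ij})$ is invertible, so the implicit function theorem yields $\bm{c}(\bm{s}) = \bm{c}^* + O(|\bm{s}|)$ of class $C^{k-1}$, and we set $x(\bm{s}) = \sum_i s_i x_i + \psi(\bm{s},\bm{c}(\bm{s}))$. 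The estimate $\psi = o(|\bm{s}|)$ follows from $D_{\bm{s}}\psi(0,\bm{c}^*)=0$. In the analytic case $k=\omega$ the same argument gives analytic dependence.

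The main obstacle is the divisibility step, i.e.\ rigorously showing $\psi(\bm{s},\bm{c}) \in \tilde X_i$ when $s_i=0$. This requires checking that the implicit-function solution in the subspace coincides with the global one, using that $(I-P_i)D_1F[0,\bm{c}^*]$ restricted to $\widehat X_i\oplus\tilde X_i$ is Fredholm of index zero with kernel exactly $\widehat X_i$, so that $L$ maps $\tilde X_i$ isomorphically onto $\tilde Y_i$.
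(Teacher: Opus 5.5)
Your proof is correct. It is the standard argument behind the cited result \cite[Thm.~A.1]{seth2024}, which the paper quotes without proof: Lyapunov--Schmidt reduction onto $\tilde X$; \ref{H6} to show $\psi(\bm s,\bm c)\in\tilde X_i$, hence $P_iF=0$, when $s_i=0$; division of the $i$-th bifurcation equation by $s_i$; and the implicit function theorem in $\bm c$ via \ref{H5}. This is the same structure ($P_iG=0$ on $\{s_i=0\}$, $k_{ij}(0)=\nu_{ij}$) that the paper reuses in the proof of Proposition~\ref{prop-app:isomorphism}.

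One small slip, which does not change the value $\nu_{ij}$. In fact $G_i(0,\bm c)=p_iD_1F[0,\bm c]\bigl(x_i+\partial_{s_i}\psi(0,\bm c)\bigr)$, and $\partial_{s_i}\psi(0,\bm c)$ need not vanish for $\bm c\neq\bm c^*$. The extra terms in $\partial_{c_j}G_i(0,\bm c^*)$ therefore vanish because $\partial_{s_i}\psi(0,\bm c^*)=0$ and $P_iL=0$ (as $\ran L=\tilde Y$), not because $\psi$ and $D_{\bm c}\psi$ vanish at $\bm s=0$.
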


\begin{remark}
    We point out that the local bifurcation result in Theorem \ref{thm:seth} is stronger than the one in \cite[Thm.~I.19.6]{kielhofer2012}, which obtains a number of one-dimensional curves compared to the $n$-dimensional surface of solutions obtained in Theorem \ref{thm:seth}.
\end{remark}

For any $x\in X$ we write
$$x=\sum_{i=1}^n s_ix_i+ \tilde{x},$$
with $\tilde{x} \in \tilde{X}$. 
Using this, it is convenient for a multi-parameter global bifurcation analysis, as in Section \ref{sec:global-bif}, to introduce the operator
$$G(\tilde{x},\bm{c},\bm{s})=F(\textstyle\sum_{i=1}^n s_ix_i + \tilde{x},\bm{c}),$$
where we think of $\bm{s}=(s_1, \ldots, s_n)$ as parameters and solve for $(\tilde{x},\bm{c})$ in terms of $\bm{s}$. For the global bifurcation argument, we then need to check that the linear operator
\begin{align*}
     A(\bm{s})\colon \tilde{X} \times \R^n &\to Y\\
     (\tilde{\bm{v}},b)  &\mapsto G_{\tilde{x}}(\tilde{x}(\bm{s}),\bm{c}(\bm{s}),\bm{s})\tilde{\bm{v}}+ G_{\bm{c}}(\tilde{x}(\bm{s}),\bm{c}(\bm{s}),\bm{s})b
\end{align*}
with $\tilde x(\bm{s})=x(\bm{s})-\sum_{i=1}^n s_i x_i$ is a linear isomorphism.

\begin{proposition}
\label{prop-app:isomorphism}
    Let $F$ and $(x(\bm{s}), \bm{c}(\bm{s}))$ be as in Theorem \ref{thm:seth}. Fix $\hat{\bm{s}}\in S^{n-1}$ with $\hat{s}_i \ne 0$ for all $i = 1,\dots,n$. Then the linear map $A(r\hat{\bm{s}})$ is an isomorphism for $0<r\ll 1$.
\end{proposition}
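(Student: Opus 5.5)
The plan is to show that $A(r\hat{\bm{s}})$ is Fredholm of index $0$ and injective for small $r>0$, and to reduce injectivity to the invertibility of an $n\times n$ matrix whose leading-order part is $r\,\mathrm{diag}(\hat s_1,\dots,\hat s_n)(\nu_{ij})$.

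\emph{Fredholmness and the degenerate limit.} The map $A(\bm{s})(\tilde{\bm{v}},b)=D_1F[x(\bm{s}),\bm{c}(\bm{s})]\tilde{\bm{v}}+D_{\bm{c}}F[x(\bm{s}),\bm{c}(\bm{s})]b$ is a finite-rank perturbation of the restriction of $D_1F[x(\bm{s}),\bm{c}(\bm{s})]$ to $\tilde X$. That restriction is index $-n$: it is Fredholm of index $0$ on $X$ by \ref{H1} and continuity of the index, and $\tilde X$ has codimension $n$. Adding the $n$-dimensional parameter space $\R^n$ restores index $0$, so it suffices to prove that $\ker A(r\hat{\bm{s}})=\{0\}$ for $0<r\ll1$.

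At $\bm{s}=0$ we have $A(0)(\tilde{\bm{v}},b)=L\tilde{\bm{v}}$. This holds because $F(0,\bm{c})=0$ for all $\bm{c}$ gives $D_{\bm{c}}F[0,\bm{c}]=0$. The operator $L$ is injective on $\tilde X$ with range $\tilde Y$. Hence $A(0)$ has the $n$-dimensional kernel $\{0\}\times\R^n$, and the task is to see how this kernel is destroyed at first order in $r$.

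\emph{Lyapunov--Schmidt splitting of $A(\bm{s})(\tilde{\bm{v}},b)=0$.} Write $P=\sum_iP_i$, so that $I-P$ projects onto $\tilde Y$. We use $F(0,\bm{c})\equiv0$, analyticity, $x(\bm{s})=O(|\bm{s}|)$ and $\bm{c}(\bm{s})-\bm{c}^*=O(|\bm{s}|)$. These give
\[
D_1F[x(\bm{s}),\bm{c}(\bm{s})]=L+O(|\bm{s}|),\qquad D_{\bm{c}}F[x(\bm{s}),\bm{c}(\bm{s})]b=D_1D_{\bm{c}}F[0,\bm{c}^*](x(\bm{s}),b)+O(|\bm{s}|^2)|b|.
\]
Applying $I-P$ to the kernel equation, and using that $L\colon\tilde X\to\tilde Y$ is an isomorphism, yields $\|\tilde{\bm{v}}\|\lesssim|\bm{s}|\,|b|$.

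Applying $P_i$ uses $P_iL=0$, and the $D_1F$-term then contributes only $P_i\,O(|\bm{s}|)\tilde{\bm{v}}=O(|\bm{s}|^2)|b|$. Inserting $x(\bm{s})=\sum_js_jx_j+o(|\bm{s}|)$, we obtain equations of the form
\[
\sum_j s_j\,P_iD_1D_{\bm{c}}F[0,\bm{c}^*](x_j,b)+o(|\bm{s}|)|b|=0,\qquad i=1,\dots,n.
\]

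\emph{Identifying the leading matrix; this is the main point.} For $j\ne i$ I would use \ref{H6}. The map $F$ sends $\widehat X_i\oplus\tilde X_i$ (which contains $x_j$) into $\widehat Y_i\oplus\tilde Y_i$ for every $\bm{c}$. Differentiating this invariance in $x$ at $0$ and then in $\bm{c}$ shows that $D_1D_{\bm{c}}F[0,\bm{c}^*](x_j,b)\in\widehat Y_i\oplus\tilde Y_i=\ker P_i$. For $j=i$, \ref{H5} gives $P_iD_1D_{\bm{c}}F[0,\bm{c}^*](x_i,b)=\sum_k\nu_{ik}b_k\,y_i$. Consequently, with $\bm{s}=r\hat{\bm{s}}$, the reduced system reads
\[
r\bigl(\mathrm{diag}(\hat s_1,\dots,\hat s_n)(\nu_{ik})+o(1)\bigr)b=0 .
\]
Since every $\hat s_i\ne0$ and $(\nu_{ik})$ is invertible, the matrix in brackets is invertible for $0<r\ll1$. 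Hence $b=0$, and then $\tilde{\bm{v}}=0$ by the bound from the $(I-P)$-equation.

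The delicate step is the bookkeeping showing that all remaining terms are genuinely $o(r)|b|$ rather than $O(r)|b|$. The relevant contributions are the $\tilde{\bm{v}}$-term in the $P_i$-equation, the shift $\bm{c}(\bm{s})-\bm{c}^*$ inside $D_1D_{\bm{c}}F$, and the quadratic remainder in $x(\bm{s})$. The $\tilde{\bm{v}}$-term is exactly where $P_iL=0$ must be used, so that only its $O(r)$ perturbation of $L$ survives, and this is multiplied by $\|\tilde{\bm{v}}\|=O(r)|b|$. This step is also where the hypothesis $\hat s_i\ne0$ enters: if some $\hat s_i=0$, the $i$-th row of the leading matrix vanishes.
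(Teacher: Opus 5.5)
Your proof is correct and follows the same scheme as the paper. It reduces invertibility to injectivity via index zero, splits the kernel equation with $I-P$ and the $P_i$, and identifies the leading part of the $P_i$-block as $r\,\mathrm{diag}(\hat s_1,\dots,\hat s_n)(\nu_{ij})$ using \ref{H5} and \ref{H6}.

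The differences are minor. First, you eliminate $\tilde{\bm v}$ first, whereas the paper solves for $b$ first. Second, you use \ref{H6} only in its linearised form $P_iD_1D_{\bm c}F[0,\bm c^*](x_j,\cdot)=0$ for $j\neq i$, together with $o(r)$ remainders. The paper instead derives the exact factorisation $P_iG_{c_j}(\tilde x(\bm s),\bm c(\bm s),\bm s)=s_ik_{ij}(\bm s)$. That step tacitly uses $\tilde x(\bm s)\in\tilde X_i$ when $s_i=0$, a property of the construction behind Theorem \ref{thm:seth} that your version does not need.
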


\begin{proof}
Let $P=\sum^n_{i=1} P_i$, $Q=\sum^n_{i=1} Q_i$, $\tilde{P}=I-P$, $\tilde{Q}=I-Q$.
We decompose the operator as 
\begin{align*}
 A(\bm{s})= \begin{pmatrix}
\tilde{P}G_{\tilde{x}}(\tilde{x}(\bm{s}),\bm{c}(\bm{s}),\bm{s}) & \tilde{P}G_{c_1}(\tilde{x}(\bm{s}),\bm{c}(\bm{s}),\bm{s}) & \cdots &\tilde{P}G_{c_n}(\tilde{x}(\bm{s}),\bm{c}(\bm{s}),\bm{s})\\
P_{1}G_{\tilde{x}}(\tilde{x}(\bm{s}),\bm{c}(\bm{s}),\bm{s}) & P_{1}G_{c_1}(\tilde{x}(\bm{s}),\bm{c}(\bm{s}),\bm{s}) & \cdots & P_{1}G_{c_n}(\tilde{x}(\bm{s}),\bm{c}(\bm{s}),\bm{s})\\
\vdots & \vdots & \ddots &\vdots\\
P_{n}G_{\tilde{x}}(\tilde{x}(\bm{s}),\bm{c}(\bm{s}),\bm{s}) &  P_{n}G_{c_1}(\tilde{x}(\bm{s}),\bm{c}(\bm{s}),\bm{s}) & \cdots & P_{n}G_{c_n}(\tilde{x}(\bm{s}),\bm{c}(\bm{s}),\bm{s})
\end{pmatrix}.   
\end{align*}
In what follows, we identify $\oplus_{i=1}^n Y_i$ with $\R^n$ and abuse notation by identifying $P_i y$ with a real number.
The upper left corner $a_{11}(\bm{s})$ equals $\tilde P D_1F[0,\bm{c}^*](I-\tilde Q)=\tilde P L(I-\tilde Q) \colon \tilde X \to \tilde Y$ for $\bm{s}=0$, hence it is an isomorphism for sufficiently small $\bm{s}$. The remaining entries are $o(|\bm{s}|)$.
Since $A(\bm{s})$ is Fredholm of index zero for $\bm{s}=0$, by continuity this also holds for small $\bm{s}$. Hence,  $A(\bm{s})$ is invertible if and only if $\ker A(\bm{s})$ is trivial.
We furthermore note that by assumption \ref{H6} above, 
\[
P_i G(\tilde x, \bm{c}, \bm{s})\Big|_{s_i=0, \tilde x \in \tilde X_i}=0.
\]
It follows that $P_i G_{c_j}(\tilde x(\bm{s}),\bm{c}(\bm{s}),\bm{s})|_{s_i=0}=0$ for all $i$ and $j$.
Hence, we can factorise
\begin{align*}
 A(r\hat{\bm{s}})=\begin{pmatrix}
a_{11}(r\hat{\bm{s}}) & a_{12}(r\hat{\bm{s}}) & \cdots & a_{1(n+1)}(r\hat{\bm{s}})\\
r d_{1}(r, \hat{\bm{s}}) & r\hat{s}_1 k_{11}(r\hat{\bm{s}}) & \cdots & r\hat{s}_1 k_{12}(r\hat{\bm{s}})\\
\vdots & \vdots & \ddots & \vdots \\
r d_{n}(r, \hat{\bm{s}}) & r\hat{s}_n  k_{n1}(r\hat{\bm{s}}) & \cdots & r\hat{s}_n k_{nn}(r\hat{\bm{s}})
\end{pmatrix}   
\end{align*}
where  the functions $d_i(r, \hat{\bm{s}})$ are continuous in $r \in [0, \epsilon)$ and $\hat{\bm{s}} \in S^{n-1}$, while the matrix $K(\bm{s})=(k_{ij}(\bm{s}))$ is continuous in $\bm{s} \in B_\varepsilon(0)$ and simplifies to
\[
k_{ij}(0)=\nu_{ij}
\]
for $\bm{s}=0$ with $\nu_{ij}$ defined in \eqref{eq:nu-matrix}. Hence, $K(\bm{s})$ is invertible with uniformly bounded inverse for $\bm{s}$ sufficiently small due to assumption \ref{H5}. Since $\hat{s}_i \ne 0$ for all $i=1, \ldots, n$, the equation $A(\bm{s})(\tilde{\bm{v}}, b)=0$ can be written as
\begin{align*}
    a_{11}(\bm{s})\tilde{\bm{v}}+O(|\bm{s}|)b=0,\\
    D(r, \hat{\bm{s}})\tilde{\bm{v}}+K(\bm{s}) b=0,
\end{align*}
where $D(r, \hat{\bm{s}})=(\hat{s}_1^{-1} d_1(r, \hat{\bm{s}}), \ldots, \hat{s}_n^{-1} d_n(r, \hat{\bm{s}}))^T$. Since $K(\bm{s})$ is invertible for sufficiently small $\bm{s}$, we can solve the second equation for $b$,
\[
    b=-K(\bm{s})^{-1}D(r, \hat{\bm{s}})\tilde{\bm{v}}.
\]
Substituting this in the first equation, we have reduced to
\[
(a_{11}(\bm{s})+o(|\bm{s}|))\tilde{\bm{v}}=0
\]
and since $a_{11}(0)$ is invertible, it follows that $\tilde{\bm{v}}=0$ if $|\bm{s}|$ is sufficiently small. Hence, $\ker A(\bm{s})$ is trivial.
\end{proof}

\begin{remark}
    From the proof, it follows that the size of $r$, where $A$ is an isomorphism, can be chosen uniformly if we cut away a conical neighbourhood of the union of the coordinate hyperplanes $\{s\in \R^n\colon s_i=0\}$. However, since we will fix the direction $\hat{\bm{s}}$ in the global bifurcation analysis, this will not be needed.
\end{remark}

\section{A variation of the analytic global bifurcation theorem}\label{app:globalBifurcation}

In this section, we record a variation of the analytic global bifurcation theorem \cite[Theorem 9.1.1]{buffoni2003} or \cite[Theorem 6]{constantin2016}, which applies to the setting of Section \ref{sec:global-bif}. Here the assumptions which guarantee the bifurcation of a local branch of nontrivial solutions are replaced by the assumption that such a branch exists, and the assumption that the bifurcation parameter is not constant along the local branch by the assumption that the partial Fr\'echet derivative of the nonlinear operator is an isomorphism (see items \ref{app:gb-assump2} and \ref{app:gb-assump3} below). Similar alterations are made in \cite[Theorem 6.1]{chen2018} and \cite[Corollary 4.1]{walsh2014}. Let $X,Y$ be Banach spaces over $\R$ and let $U \subset X \times \R$. Additionally, let $G : U \rightarrow Y$ be an $\R$-analytic function. Then, the following theorem holds.

\begin{theorem}\label{thm-app:global-bifurcation}
    Assume that the following statements are true.
    \begin{enumerate}[ref=(\roman*)]
        \item\label{app:gb-assump1} $D_\xi G(\xi,r)$ is Fredholm with index zero for all $(\xi,r) \in U$ with $G(\xi,r) = 0$;
        \item\label{app:gb-assump2} there exists an analytic local bifurcation branch $\curlR = \{(\xi(t),r(t)) : t \in (0,\varepsilon) \text{ and } G(\xi(t),r(t)) = 0\}$, which emerges from $(\xi(0),r(0)) = (0,r^*)$ for some $r^* \in \R$;
        \item\label{app:gb-assump3} $D_\xi G(\xi(t),r(t)) : X \times \R \rightarrow Y$ is an isomorphism for $0 < t \ll 1$;
        \item\label{app:gb-assump4} there exists a sequence $(\mathcal{Q}_j)_{j \in \N}$ of bounded, closed subsets of $U$ with $U = \bigcup_{j \in \N} \mathcal{Q}_j$ such that the set $\curlS \cap \mathcal{Q}_j$ is compact for each $j \in \N$, where $\curlS = \{(\xi,r) \in U \,:\, G(\xi,r) = 0\}$.
    \end{enumerate}
    Then, the results of \cite[Theorem 6]{constantin2016} hold. In particular, there exists a continuous curve $\mathfrak{R}$ extending $\curlR$ with
    \begin{equation*}
        \mathfrak{R} = \{(\xi(t),r(t)) \,:\, t \in [0,\infty)\} \subset \curlS \subset U.
    \end{equation*}
    The curve $\mathfrak{R}$ has a local analytic re-parameterisation at each point, and at least one of the following alternatives occurs:
    \begin{enumerate}
        \item[(a1)] for every $j \in \N$ there exists $t_j > 0$ such that $(\xi(t),r(t)) \notin \mathcal{Q}_j$ for all $t \geq t_j$;
        \item[(a2)] $\mathfrak{R}$ is a closed loop, that is, there exists a $T > 0$ such that $\mathfrak{R} = \{(\xi(t),r(t)) : t \in [0,T]\}$ and $(\xi(T),r(T)) = (0,r^*)$.
    \end{enumerate}
\end{theorem}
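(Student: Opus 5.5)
The plan is to reduce Theorem \ref{thm-app:global-bifurcation} to the proof of the standard analytic global bifurcation theorem \cite[Theorem 9.1.1]{buffoni2003} (in the form of \cite[Theorem 6]{constantin2016}). In that proof, the simple kernel and transversality hypotheses are used only to construct a local analytic curve through $(0,r^*)$ on which the linearisation is invertible for small $t>0$. We therefore substitute assumptions \ref{app:gb-assump2}--\ref{app:gb-assump3} directly for that first stage and then check that the rest of the argument carries over unchanged.

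The first step is to define the set
\[
\curlS_{\mathrm{reg}}=\{(\xi,r)\in\curlS : D_\xi G(\xi,r) \text{ is invertible}\}.
\]
Near each point of $\curlS_{\mathrm{reg}}$, the analytic implicit function theorem shows that $\curlS$ is locally an analytic curve parameterised by $r$. By \ref{app:gb-assump3}, the points $(\xi(t),r(t))$ with $0<t\ll1$ lie in $\curlS_{\mathrm{reg}}$. We start the maximal continuation from such a point and extend it using the implicit function theorem for as long as the curve stays in $\curlS_{\mathrm{reg}}$. This produces a distinguished arc $\mathfrak{R}^+$ of $\curlS$ which contains $\curlR$ for small $t$.

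The second step is to continue past points where $D_\xi G$ is not invertible. At such a point $(\xi_0,r_0)\in\curlS$, assumption \ref{app:gb-assump1} lets us perform an analytic Lyapunov--Schmidt reduction to a finite-dimensional analytic variety. This is the real-analytic structure theorem used in \cite[Ch.~7--9]{buffoni2003}: locally, the variety consists of finitely many analytic arcs. We then follow the distinguished arc through the point, choosing the continuation exactly as in Buffoni--Toland, so that the local analytic re-parameterisation is preserved. No step of this construction uses the kernel at $(0,r^*)$. The only role of that kernel in the original proof was to ensure that the initial branch is non-degenerate, that is, that $\curlR$ is not contained in the singular set. Assumption \ref{app:gb-assump3} supplies exactly this.

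The third step is to derive the alternatives. Assumption \ref{app:gb-assump4} gives local compactness of $\curlS\cap\mathcal Q_j$. Suppose (a1) fails. Then the curve returns infinitely often to some $\mathcal Q_j$. Compactness together with the finiteness of the local analytic arc structure implies, by the Buffoni--Toland argument, that the curve is periodic. Since it passes through $(0,r^*)$, it must return to $(0,r^*)$ at some finite $T$, and this is (a2).

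I expect the main obstacle to be the initial point $(0,r^*)$ itself. Because $D_\xi G(0,r^*)$ has a two-dimensional kernel, the solution set near $(0,r^*)$ may contain several analytic arcs. In the loop alternative we must therefore make sure that the curve closing up at $(0,r^*)$ is handled consistently with the distinguished-arc construction, rather than taking a different branch there. My approach would be to treat $(0,r^*)$ like any other singular point of the analytic variety, using the Lyapunov--Schmidt reduction there as well. I would then check that the Buffoni--Toland "route" selection only needs $\curlR$ to be one of the finitely many arcs at that point, which is guaranteed by \ref{app:gb-assump2}.
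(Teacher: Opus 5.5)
Your proposal is correct and follows the paper's argument. The paper's entire proof is the observation that (ii) and (iii) give $\curlR \subset \mathfrak{N}=\{(\xi,r)\in\curlS : \ker D_\xi G(\xi,r)=\{0\}\}$ for small $t>0$. This is the only place where the proof of Buffoni--Toland's Theorem 9.1.1 uses the one-dimensional kernel, so the rest of that proof, together with the modifications of Constantin--Strauss--V\u{a}rv\u{a}ruc\u{a}, carries over unchanged.

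Your discussion of $(0,r^*)$ in the loop alternative is a sensible elaboration that the paper leaves implicit. One correction to Step 2: it is the regular part $\curlS_{\mathrm{reg}}$ near a singular point that decomposes into finitely many analytic arcs, not the whole solution variety. In the application, the trivial solutions $(0,\bm{c},0)$ form a two-dimensional family through the bifurcation point.
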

\begin{proof}
    Combining assumptions \ref{app:gb-assump2} and \ref{app:gb-assump3}, we obtain that
    \begin{equation*}
        \curlR \subset \mathfrak{N} = \{(\xi,r) \in \curlS \,:\, \ker(D_\xi G(\xi,r)) = \{0\}\},
    \end{equation*}
    cf.~\cite[Equation (9.1)]{buffoni2003}. This inclusion is the only part of the proof of \cite[Theorem 9.1.1]{buffoni2003} that relies on the fact that the kernel at the bifurcation point is one-dimensional. Hence, the rest of the proof of \cite[Theorem 9.1.1]{buffoni2003} as well as the changes made by \cite{constantin2016} hold unchanged under the above assumptions, and the statement follows.
\end{proof}

\section{Schauder estimates}
\label{sec:Schauder estimates}

In this section, we provide a proof for the Schauder estimates used in the main part. To do this, we recall the flattened div-curl problem \eqref{eq:formulation-flat} which is given by
\begin{subequations}
    \label{eq:new formulation-flat}
    \begin{alignat}{2}
            \nabla \times (A(\eta)\bar{\bm{u}}) &=\bar{\bm{w}} \quad &&\text{in } \Omega^0, \label{eq:new curl-flat}\\
            \nabla \cdot \bar{\bm{u}} &= 0 &&\text{in } \Omega^0, \label{eq:new div-flat}\\
            \bar{\bm{u}} \cdot \bm{n} &= 0 &&\text{on } \partial \Omega^0. \label{eq:new kinboundary-flat} 
    \end{alignat}
\end{subequations}
Here, we recall from \eqref{eq:formulation-flat} that $A(\eta) \in \R^{3\times 3}$ is given by
\begin{equation}\label{eq:new Aeta}
        A(\eta) = \dfrac{1}{\det(D\frakF)}D\frakF^T D\frakF,
\end{equation}
which contains at most first-order derivatives of $\eta$. 
Note that $A(\eta)$ is symmetric and positive definite,
    \[
     A(\eta) \bm{\xi} \cdot \bm{\xi}= \dfrac{1}{\det(D\frakF)}|D\frakF\bm{\xi}|^2.
    \]
Moreover, the quadratic form is bounded from above and below by positive constants which only depend on $\max (\eta+d)^{-1}$ and
$\|\eta\|_{C^1_\mathrm{per}}$, and hence there are similar upper bounds for the matrix norms of $A(\eta)$ and $(A(\eta))^{-1}$.

\begin{theorem}\label{thm-app:schauder}
    Let either $k \geq 0$ or $k = -1$ and $\eta \in \curlB^{k+2,\gamma}_{\delta, R}$. Additionally, let $\bar{\bm{w}} \in C^{k,\gamma}_\mathrm{per}(\overline{\Omega^0};\R^3)$ if $k \geq 0$ and $\bar{\bm{w}} \in C^{0,\gamma}_\mathrm{per}(\overline{\Omega^0};\R^3)$ if $k = -1$. Then, a solution $\bar{\bm{u}} \in C^{k+1,\gamma}_\mathrm{per}(\overline{\Omega^0};\R^3)$ to the flattened system \eqref{eq:new formulation-flat} satisfies the Schauder estimate
    \begin{equation}
        \Holdernormper{\bar{\bm{u}}}{k+1}{\gamma}
            \leq C(\delta^{-1}, R) (\|\bar{\bm{u}}\|_{C^0_\mathrm{per}} + \|\bar{\bm{w}}\|_{C^{k,\gamma}_\mathrm{per}}), \label{new est-u_1_gamma}
    \end{equation}
    if $k \geq 0$, and
    \begin{equation}
        \Holdernormper{\bar{\bm{u}}}{0}{\gamma}
            \leq C(\delta^{-1}, R) (\|\bar{\bm{u}}\|_{C^0_\mathrm{per}} + \|\bar{\bm{w}}\|_{C^0_\mathrm{per}}), \label{new est-u_0_gamma}
    \end{equation}
    if $k = -1$.
\end{theorem}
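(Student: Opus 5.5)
The plan is to reduce the first-order div-curl system \eqref{eq:new formulation-flat} to a second-order divergence-form elliptic system with a boundary condition, and then invoke classical Schauder theory: Agmon--Douglis--Nirenberg for $k \geq 0$, and a low-order Morrey/Campanato-type argument for $k=-1$. The key structural fact is that $A(\eta)$ is symmetric and positive definite, with ellipticity constants controlled by $\delta^{-1}$ and $R$ through $\|\eta\|_{C^1_\mathrm{per}}$. By periodicity it suffices to prove local estimates on balls (interior) and half-balls at $z=0$ and $z=-d$, and then cover one periodic cell by finitely many such sets. Every constant will depend only on the ellipticity bounds and on the relevant Hölder norm of $A(\eta)$, which is $C^{k+1,\gamma}$ since $\eta\in C^{k+2,\gamma}$.

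For $k\geq 0$ I would set $\bm{v}=A(\eta)\bar{\bm{u}}$, so that $\nabla\times\bm{v}=\bar{\bm{w}}$ and $\nabla\cdot(A^{-1}\bm{v})=0$, with boundary condition $(A^{-1}\bm{v})_3=0$ on $z=0,-d$.

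Locally, on a simply connected (half-)ball, I would write $\bm{v}=\nabla\phi+\bm{b}$, where $\bm{b}$ is a particular solution of $\nabla\times\bm{b}=\bar{\bm{w}}$ with $\|\bm{b}\|_{C^{k+1,\gamma}}\lesssim\|\bar{\bm{w}}\|_{C^{k,\gamma}}$, obtained by a cutoff Biot--Savart potential after extending $\bar{\bm{w}}$. Then $\phi$ solves the divergence-form scalar problem
\[
\nabla\cdot(A^{-1}\nabla\phi)=-\nabla\cdot(A^{-1}\bm{b})
\]
with the conormal Neumann condition $\bm{e}_3\cdot A^{-1}(\nabla\phi+\bm{b})=0$ on the flat boundary. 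Standard boundary and interior Schauder estimates for this oblique/conormal problem, in the form of \cite[Theorem 6.2, 6.26]{gilbarg2001} and their divergence-form $C^{1,\gamma}$ analogues, give
\[
\|\nabla\phi\|_{C^{k+1,\gamma}}\lesssim\|\phi\|_{C^0}+\|\bm{b}\|_{C^{k+1,\gamma}}.
\]
Normalising $\phi$ by its mean and using $\|\nabla\phi\|_{C^0}\le\|\bm{v}\|_{C^0}+\|\bm{b}\|_{C^0}$ controls $\|\phi\|_{C^0}$, after shrinking the domain. Finally, $\bar{\bm{u}}=A^{-1}\bm{v}$ inherits $C^{k+1,\gamma}$ bounds because $A^{-1}\in C^{k+1,\gamma}$.

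For $k=-1$, $A$ is only $C^{0,\gamma}$ and $\bar{\bm{w}}$ is only in $C^0$, so the same decomposition is used at the weak level. Locally, $\bm{b}$ is taken in $C^{0,\gamma}$; indeed $W^{1,p}$ for all $p$, via Calderón--Zygmund applied to the Biot--Savart kernel with $\bar{\bm{w}}\in L^\infty$. Then $\phi$ is a weak solution of the divergence-form equation with $C^{0,\gamma}$ coefficients $A^{-1}$, right-hand side in divergence form with $C^{0,\gamma}$ data $A^{-1}\bm{b}$, and conormal boundary condition. The Campanato/Morrey-type $C^{1,\gamma}$ estimate for divergence-form equations with Hölder coefficients, as in \cite[Theorem 8.32--8.33]{gilbarg2001} and its conormal boundary version (after even/odd reflection across the flat boundary), gives $\|\nabla\phi\|_{C^{0,\gamma}}\lesssim\|\phi\|_{L^\infty}+\|\bm{b}\|_{C^{0,\gamma}}$. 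This yields \eqref{new est-u_0_gamma}.

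The main obstacle is this low-order boundary case. One must verify that the conormal condition $\bm{e}_3\cdot A^{-1}\nabla\phi=-\bm{e}_3\cdot A^{-1}\bm{b}$ fits the framework with merely Hölder coefficients. I would handle it with a weak formulation tested against functions that are not required to vanish on the flat boundary, then freeze coefficients and compare with constant-coefficient conormal problems at the Campanato level. A second point needing care is that the local splitting must be done so that the constants depend only on $\delta^{-1}$ and $R$, and never on $\bar{\bm{u}}$ beyond $\|\bar{\bm{u}}\|_{C^0}$.
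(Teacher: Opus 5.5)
Your route differs from the paper's and can be completed, but one step fails as written. The particular solution $\bm{b}$ of $\nabla\times\bm{b}=\bar{\bm{w}}$ cannot come from a cutoff Biot--Savart potential. A cutoff $\chi\bar{\bm{w}}$ is no longer divergence-free, and one has $\nabla\times\mathrm{BS}(\chi\bar{\bm{w}})=\chi\bar{\bm{w}}+\nabla\psi$ with $\Delta\psi=-\nabla\chi\cdot\bar{\bm{w}}$. So even where $\chi\equiv 1$, the curl differs from $\bar{\bm{w}}$ by a harmonic gradient that need not vanish, and then $\bm{v}-\bm{b}$ is not curl-free.

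Extending $\bar{\bm{w}}$ across $z=0$ as a solenoidal field does not rescue the case $k\ge 0$. The reflection that keeps the normal component continuous ($\bar w_1,\bar w_2$ odd, $\bar w_3$ even) destroys H\"older continuity of the tangential components, so the gain $C^{k,\gamma}\to C^{k+1,\gamma}$ is lost. For $k=-1$ alone that reflection plus a periodic Biot--Savart potential would suffice, since only $L^\infty\to W^{1,p}$ is needed.

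A construction that works for all $k$ is global on the periodic slab. Solve $-\Delta\bm{\beta}=\bar{\bm{w}}$ in $\Omega^0$ with $\beta_1=\beta_2=0$ and $\partial_3\beta_3=0$ on $\partial\Omega^0$.
\begin{itemize}
\item The Neumann problem is solvable: $\bar{\bm{w}}=\nabla\times(A(\eta)\bar{\bm{u}})$ is the curl of a periodic field, so $\int_{B_{00}}\bar w_3\,\mathrm{d}\bm{x}'=0$ for every $z$.
\item Since $\nabla\cdot\bar{\bm{w}}=0$, the function $\nabla\cdot\bm{\beta}$ is harmonic. It vanishes on $\partial\Omega^0$, hence vanishes identically.
\item Therefore $\bm{b}=\nabla\times\bm{\beta}$ satisfies $\nabla\times\bm{b}=\bar{\bm{w}}$.
\item It obeys $\|\bm{b}\|_{C^{k+1,\gamma}_\mathrm{per}}\lesssim\|\bar{\bm{w}}\|_{C^{k,\gamma}_\mathrm{per}}$ for $k\ge 0$, and, via $W^{2,p}$ estimates, $\|\bm{b}\|_{C^{0,\gamma}_\mathrm{per}}\lesssim\|\bar{\bm{w}}\|_{C^0_\mathrm{per}}$ for $k=-1$.
\end{itemize}
This is the same Dirichlet/Neumann system that appears in the vector-potential step of Lemma~\ref{lemma:frozen}.

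The second issue is the conormal estimate for $k=-1$, which you rightly flag as the main obstacle. Reflection does not reduce it to \cite[Theorem 8.32]{gilbarg2001}. On $z=0$ one has $(A(\eta)^{-1})_{13}=-\partial_x\eta$ and $(A(\eta)^{-1})_{23}=-\partial_y\eta$, so the conormal $A(\eta)^{-1}\bm{e}_3$ is genuinely oblique. Consequently the evenly reflected coefficients $RA(\eta)^{-1}R$, with $R=\mathrm{diag}(1,1,-1)$, jump across $z=0$, whereas Theorems 8.32--8.33 there need H\"older coefficients (and 8.33 is a Dirichlet result).

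You therefore need a genuine $C^{1,\gamma}$ estimate, with $C^{0,\gamma}$ coefficients, for the weak conormal problem $\int A^{-1}(\nabla\phi+\bm{b})\cdot\nabla\psi=0$ over test functions not vanishing on the flat boundary. This comes from the Campanato freezing argument you sketch; such results exist in the oblique/conormal literature, but not in Gilbarg--Trudinger. The weak conormal formulation itself is justified: it follows from $\nabla\cdot\bar{\bm{u}}=0$ weakly, $\bar u_3=0$ on $\partial\Omega^0$, and the H\"older continuity of $\bar{\bm{u}}$. With these two repairs your argument closes.

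By comparison, the paper freezes coefficients in the first-order system itself and maps the frozen problem to $A_0=I$ by a linear change of variables. It then needs only Dirichlet and Neumann estimates for the flat Laplacian, and absorbs the variable coefficients by a cutoff argument that treats interior and boundary points alike. Your reduction to a single scalar divergence-form equation is conceptually short and handles both cases uniformly. The cost is that it relies on variable-coefficient conormal Schauder theory at the lowest regularity, which is exactly what the paper's route avoids.
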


\begin{remark}
    In the case $k=-1$ the equations in this section have to be considered in the weak sense.
\end{remark}

When $k\ge 1$, the Schauder estimate \eqref{new est-u_1_gamma} could be proved by considering the equivalent system \eqref{eq:div-curl-full} for $\bm{u}$, taking another curl, and using the identity 
\begin{equation}
\label{eq:curlcurl}
\nabla \times (\nabla\times \bm{u})=\nabla (\nabla \cdot \bm{u})-\Delta \bm{u}
\end{equation}
together with \eqref{eq:div-curl-full eq}, to obtain $-\Delta \bm{u}=\curl \bm{w}$. Flattening, one obtains the transformed system
\begin{equation}
\label{eq:transformed Laplace}
        -D\frakF^{-1}\nabla \cdot A(\eta)^{-1}\nabla \left( \dfrac{1}{\det(D\frakF)} D\frakF \bar{\bm{u}}\right) = \curl A(\eta) \bar{\bm{w}},
\end{equation}
together with suitable boundary conditions. The estimate \eqref{new est-u_1_gamma} for $k\ge 1$ now follows by applying the  theory for elliptic systems with general boundary conditions developed in \cite{agmon1964}. For $k = 0$ it is however difficult to locate this estimate in the literature. Note that the right-hand side of \eqref{eq:transformed Laplace} is not in any H\"older space in this case. However, it does consist of derivatives of H\"older functions, and one can therefore in principle use gradient estimates for systems in divergence form to prove the estimate; in particular it is possible to adapt the proof in \cite[Appendix A]{engelstein2016AnnSciEcoleNormSup} to the current situation by considering  $\tilde{\bm{u}}= (\det D\frakF)^{-1} D\frakF \bar{\bm{u}}$ as the unknown. 
It is not clear how to get the estimate \eqref{new est-u_0_gamma} this way, though.
A natural alternative approach is to consider a vector potential for $\bar{\bm{u}}$ satisfying a second-order elliptic system in divergence form. This unfortunately seems to lead to a mismatch of regularity of the coefficients. Instead, our strategy is to freeze the coefficients in the first-order system \eqref{eq:new formulation-flat}, and only after that introduce a vector potential to prove estimates for the frozen system with constant coefficients. We then go back to the first-order div-curl system when we do the classical perturbation argument. Here we use the special structure of the system in order to simplify the proof; in particular, we do not distinguish between interior points and boundary points. Furthermore, we choose to prove \eqref{new est-u_1_gamma} by the same approach to streamline the presentation.

We begin with a lemma for a system with constant coefficients.

\begin{lemma}
\label{lemma:frozen}
Let  $A_0\in \mathbb R^{3\times 3}$ be a positive definite, symmetric matrix and $k \ge -1$. Additionally, let $\bar{\bm{w}}_j \in  C_\mathrm{per}^{k+1, \gamma}(\overline{\Omega^0}; \R^3)$ and 
\begin{itemize}
    \item $\bar{\bm{w}}_0 \in  C_\mathrm{per}^{k, \gamma}(\overline{\Omega^0}; \R^3)$, $\bar{f} \in C_\mathrm{per}^{k, \gamma}(\overline{\Omega^0})$ if $k\ge 0$, and
    \item $\bar{\bm{w}}_0 \in  C_\mathrm{per}^{0}(\overline{\Omega^0}; \R^3)$, $\bar{f} \in C_\mathrm{per}^{0}(\overline{\Omega^0})$ if $k=-1$.
\end{itemize}
Then there is a constant $C$, only depending on $\|A_0\|$ and $\|A_0^{-1}\|$,  such that any solution $\bar{\bm{u}} \in C_\mathrm{per}^{k+1, \gamma}(\overline{\Omega^0}; \R^3)$ of
\begin{subequations}
    \label{eq:frozen}
    \begin{alignat}{2}
            \nabla \times (A_0 \bar{\bm{u}}) &=\bar{\bm{w}}_0+\textstyle \sum_{j=1}^3 \partial_j \bar{\bm{w}}_j \quad &&\text{in } \Omega^0, \label{eq:curl-frozen}\\
            \nabla \cdot \bar{\bm{u}} &= \bar{f} &&\text{in } \Omega^0, \label{eq:div-frozen}\\
            \bar{\bm{u}} \cdot \bm{n} &= 0 &&\text{on } \partial \Omega^0, \label{eq:kinboundary-frozen}
    \end{alignat}
\end{subequations}
satisfies
\begin{equation}
\label{eq:frozen estimate 1}
\|\bar{\bm{u}}\|_{C_\mathrm{per}^{0, \gamma}}\le C(\|\bar{\bm{u}}\|_{C_\mathrm{per}^0}+\|\bar{\bm{w}}_0\|_{C_\mathrm{per}^{0}}+\textstyle \sum_{j=1}^3\|\bar{\bm{w}}_j\|_{C_\mathrm{per}^{0,\gamma}}+\|\bar{f}\|_{C_\mathrm{per}^{0}}) \quad \text{if } k=-1,
\end{equation}
and
\begin{equation}
\label{eq:frozen estimate 2}
\|\bar{\bm{u}}\|_{C_\mathrm{per}^{k+1, \gamma}}\le C(\|\bar{\bm{u}}\|_{C_\mathrm{per}^0}+\|\bar{\bm{w}}_0\|_{C_\mathrm{per}^{k,\gamma}}+\textstyle \sum_{j=1}^3\|\bar{\bm{w}}_j\|_{C_\mathrm{per}^{k+1,\gamma}}+\|\bar{f}\|_{C_\mathrm{per}^{k,\gamma}}) \quad \text{if } k\ge 0.
\end{equation}
\end{lemma}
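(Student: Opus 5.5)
We reduce the first-order system \eqref{eq:frozen} with constant coefficients to scalar and vector Poisson problems on the periodic slab, where Hölder (and, for $k=-1$, Campanato-type) estimates for divergence-form right-hand sides are classical. A linear change of variables $\bm{x}=T\bm{y}$ with $T$ adapted to $A_0$ would not preserve the slab unless $T$ fixes the vertical direction. We therefore do not normalise $A_0$. Instead we use the Helmholtz-type decomposition
\[
A_0\bar{\bm{u}} = \nabla \phi + \nabla\times \bm{\psi} + \bm{m},
\]
where $\bm{m}$ is a constant horizontal vector (the mean horizontal part), $\bm{\psi}$ is periodic with $\nabla\cdot\bm{\psi}=0$ and tangential boundary conditions $\psi_1=\psi_2=0$, $\partial_3\psi_3=0$ on $z=-d,0$, and $\phi$ is periodic. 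Then $|\bm{m}|\lesssim\|\bar{\bm{u}}\|_{C^0}$. Taking the curl gives $-\Delta\bm{\psi}=\bar{\bm{w}}_0+\sum_j\partial_j\bar{\bm{w}}_j$ with the above boundary conditions. These conditions decouple into Dirichlet problems for $\psi_1,\psi_2$ and a Neumann problem for $\psi_3$ (the compatibility condition holds since the curl has zero flux). Reflecting evenly or oddly across $z=0,-d$ turns these into fully periodic problems on $\R^3$. Taking the divergence of $\bar{\bm{u}}=A_0^{-1}(\nabla\phi+\nabla\times\bm{\psi}+\bm{m})$ gives
\[
\nabla\cdot(A_0^{-1}\nabla\phi)=\bar f-\nabla\cdot(A_0^{-1}\nabla\times\bm{\psi}),
\]
a constant-coefficient elliptic equation. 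The condition $\bar u_3=0$ on the boundary becomes a conormal Neumann condition $\bm{e}_3\cdot A_0^{-1}(\nabla\phi+\nabla\times\bm\psi)=0$ on $z=0,-d$.

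First, we estimate $\bm{\psi}$. For $k\ge0$, standard periodic Schauder theory for the Laplacian with divergence-form data (e.g.\ the $C^{1,\gamma}$ estimate for $\Delta v=\partial_j g$, $g\in C^{0,\gamma}$, and its higher-order versions) gives
\[
\|\nabla\bm\psi\|_{C^{k+1,\gamma}}\lesssim \|\bar{\bm w}_0\|_{C^{k,\gamma}}+\sum_j\|\bar{\bm w}_j\|_{C^{k+1,\gamma}}+\|\nabla\bm\psi\|_{C^0}.
\]
For $k=-1$, the bounded part $\bar{\bm w}_0\in C^0$ gives $\nabla\bm\psi\in C^{0,\gamma}$ via the Newtonian potential ($W^{2,p}$ for all $p$, then Morrey), and the part $\partial_j\bar{\bm w}_j$ with $\bar{\bm w}_j\in C^{0,\gamma}$ gives $\nabla\bm\psi\in C^{0,\gamma}$ by the Calderón--Zygmund/Schauder estimate for second derivatives of the Newtonian potential. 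The reflections are compatible here: odd or even extensions preserve the relevant Hölder class for $\bm{w}_j$, with the parity chosen according to $j$ and the component.

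Second, we estimate $\phi$ from its oblique-derivative (conormal) problem. This is again a constant-coefficient problem whose data are $\bar f$ and $A_0^{-1}\nabla\times\bm\psi$, the latter entering in divergence form. The constant-coefficient conormal problem in a slab can be reduced by a shear $\bm{y}=(\bm{x}'-z\bm{a},z)$, which preserves the slab, to the Neumann problem for the Laplacian after a further diagonal scaling. Its Hölder theory with divergence-form data then yields $\|\nabla\phi\|_{C^{k+1,\gamma}}$ (respectively $C^{0,\gamma}$ for $k=-1$), bounded by the same right-hand side plus $\|\bar f\|$. Combining the two steps, the lower-order terms $\|\nabla\phi\|_{C^0}$ and $\|\nabla\bm\psi\|_{C^0}$ can be bounded by $\|\bar{\bm{u}}\|_{C^0}$ plus the data. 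This follows by interpolation, or by observing that the constants depend only on $\|A_0\|$ and $\|A_0^{-1}\|$ and using an orthogonality argument: in $L^2$ the decomposition is orthogonal and $\|\cdot\|_{L^2}\le C\|\cdot\|_{C^0}$ on the cell.

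The main obstacle is the case $k=-1$, where we need a $C^{0,\gamma}$ bound on $\nabla\phi$ and $\nabla\bm\psi$ from data that are merely $C^0$ plus derivatives of $C^{0,\gamma}$ functions, uniformly up to the boundary. We would handle it by using the reflections to remove the boundary entirely, reducing everything to periodic singular-integral estimates on $\R^3$, and by verifying carefully that each reflected datum retains its Hölder regularity.
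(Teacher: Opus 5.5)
Your architecture is the paper's: a splitting into a gradient part, a vector potential $\bm{\psi}$ with $-\Delta\bm{\psi}=\bar{\bm{w}}_0+\sum_j\partial_j\bar{\bm{w}}_j$, $\psi_1=\psi_2=0$, $\partial_3\psi_3=0$ on $\partial\Omega^0$, and a constant horizontal field that produces the $\|\bar{\bm{u}}\|_{C^0_\mathrm{per}}$ term. The step that fails is the one you single out as the main obstacle, namely removing the boundary by reflection when $k=-1$.

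\textbf{Why the reflection step fails.} The parity of each reflected datum is not yours to choose. It is forced by the boundary condition of the component and by whether the derivative is tangential or normal.
For $\psi_i$, $i=1,2$ (Dirichlet, odd reflection), consider the terms $\partial_1\bar w_{1,i}$ and $\partial_2\bar w_{2,i}$, where $\bar w_{j,i}$ is the $i$th component of $\bar{\bm w}_j$. Since $\partial_1,\partial_2$ commute with $z\mapsto -z$, these terms require the odd extensions of $\bar w_{1,i}$ and $\bar w_{2,i}$. Those extensions jump across $z=0,-d$ unless the functions vanish there.
For $\psi_3$ (Neumann, even reflection), the term $\partial_3\bar w_{3,3}$ requires the odd extension of $\bar w_{3,3}$, with the same problem.
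So the reflected data are not in $C^{0,\gamma}$, and the whole-space Calder\'on--Zygmund/Schauder estimate gives no H\"older bound on $\nabla\bm{\psi}$.

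The same happens for your $\phi$. Even after the shear, the Neumann datum $-\bm{e}_3\cdot A_0^{-1}(\nabla\times\bm{\psi}+\bm{m})$ is non-zero in general, so the even reflection of $\phi$ is not even $C^1$. (You also dropped the constant $\bm{e}_3\cdot A_0^{-1}\bm{m}$ from the conormal condition. It need not vanish for horizontal $\bm{m}$, but it is bounded by $\|\bar{\bm{u}}\|_{C^0_\mathrm{per}}$, so it is harmless.)
Reflection is also unsafe for $k\ge 0$: on the boundary $-\partial_3^2\psi_i$ equals the $i$th component of the right-hand side, which need not vanish, so odd reflections are not $C^2$.

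The boundary has to be handled by genuine global estimates for weak solutions of divergence-form equations with Dirichlet and Neumann conditions, \cite[Theorem 8.33 and Section 6.7]{gilbarg2001}, which is what the paper uses.

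\textbf{Normalising $A_0$ is possible and simplifies the problem.} Your reason for not normalising is mistaken. A linear change of variables maps the slab to another periodic slab as soon as it maps the plane $\{z=0\}$ to itself; it need not fix the vertical direction, and your own shear is an example.
The paper writes $A_0=(\det J)^{-1}J^TJ$, with $J$ a rotation composed with a multiple of $A_0^{1/2}$, the rotation chosen so that $J\{z=0\}=\{z=0\}$. It then sets $\bm{u}=(\det J)^{-1}J\bar{\bm{u}}\circ J^{-1}$, which preserves $\bm{u}\cdot\bm{n}=0$ and reduces the system to $A_0=I$.
After this your decomposition decouples completely. Since $\nabla\cdot(\nabla\times\bm{\psi})=0$ and $(\nabla\times\bm{\psi})_3=\partial_1\psi_2-\partial_2\psi_1=0$ on the boundary, the scalar problem becomes just $\Delta\phi=f$, $\partial_3\phi=0$. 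This is the paper's Neumann problem, with no divergence-form data and no inhomogeneous conormal condition. For this problem your even reflection, together with $W^{2,p}$ estimates and Morrey, does work, because $f$ only needs to be bounded.
The paper removes the divergence first and then takes a vector potential of the remainder; this amounts to the same splitting. Only the $\bm{\psi}$-problem then needs the divergence-form boundary estimates.
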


\begin{proof}
Throughout the proof, we use the notation $a \lesssim b$ to indicate that $a\le Cb$ for some constant $C$ which only depends on  $\|A_0\|$ and $\|A_0^{-1}\|$.
We first perform a change of variables, which transforms the equations to a standard div-curl problem with $A_0=I$. While not entirely necessary, this simplifies the analysis.  By taking a square root of $A_0$ and composing with a rotation, we can always decompose $A_0=\frac{1}{\det J} J^T J$ where $J$ is a matrix with $\det J=(\det A_0)^{-1}>0$, which leaves the plane $\{z=0\}$ invariant and maps the domain $\Omega^0$ to a  new periodic slab $\tilde \Omega^0$ for some  $\tilde d=d J\bm{e}_3\cdot \bm{e}_3>0$ and lattice $\tilde \Lambda$ spanned by $J\bm{\lambda}_1$ and $J\bm{\lambda}_2$. Then 
\[
\bm{u}\coloneqq \frac{1}{\det J} J \bar{\bm{u}} \circ J^{-1}
\]
satisfies
\begin{subequations}
    \begin{alignat}{2}
            \nabla \times \bm{u} &=\bm{w}_0+\textstyle \sum_{j=1}^3 \partial_j \bm{w}_j \quad &&\text{in } \tilde \Omega^0, \\
            \nabla \cdot \bm{u} &= f &&\text{in } \tilde \Omega^0, \\
            \bm{u} \cdot \bm{n} &= 0 &&\text{on } \partial \tilde \Omega^0, 
    \end{alignat}
 \end{subequations}
where $f=\frac{1}{\det J} \bar{f}\circ J^{-1}$, $\bm{w}_0=\frac{1}{\det J} J \bar{\bm{w}}_0\circ J^{-1}$ and $\bm{w}_j$ are the columns of the matrix
\[
W=\frac{1}{\det J} J (\bar W\circ J^{-1}) J^T,
\]
where $\bar W$ has columns $\bar{\bm{w}}_1$, $\bar{\bm{w}}_2$ and $\bar{\bm{w}}_3$.

To remove the $f$ in the divergence equation we decompose $\bm{u}$ as $\bm{u}=\nabla \phi+\tilde{\bm{u}}$, where
\begin{subequations}
    \label{decomposition phi}
    \begin{alignat}{2}
            \Delta \phi&= f \quad &&\text{in } \tilde \Omega^0, \\
          \nabla \phi \cdot \bm{n} &= 0 &&\text{on } \partial \tilde \Omega^0.
    \end{alignat}
    \end{subequations}
Then $\tilde{\bm{u}}$ satisfies
\begin{subequations}
    \label{eq:u-tilde-system}
    \begin{alignat}{2}
            \nabla \times  \tilde{\bm{u}} &=\bm{w}_0+\textstyle \sum_{j=1}^3 \partial_j \bm{w}_j \quad &&\text{in } \tilde \Omega^0, \label{eq:curl-tilde-u}\\
            \nabla \cdot \tilde{\bm{u}} &= 0 &&\text{in } \tilde \Omega^0,\\
            \tilde{\bm{u}} \cdot \bm{n} &= 0 &&\text{on } \partial \tilde \Omega^0.
    \end{alignat}
\end{subequations}
Here we note that $f$ has total integral $0$ by \eqref{eq:div-frozen}--\eqref{eq:kinboundary-frozen} and the divergence theorem, so that there is a unique solution $\phi$ of \eqref{decomposition phi} with zero average. Moreover, we have 
$\phi \in C_\mathrm{per}^{k+2, \gamma}$ with $\|\phi\|_{C_\mathrm{per}^{k+2,\gamma}}\lesssim \|f\|_{C_\mathrm{per}^{k,\gamma}}$ if $k\ge 0$, and $\phi \in C_\mathrm{per}^{1,\gamma'}$ with
$\|\phi\|_{C_\mathrm{per}^{1,\gamma'}}\lesssim \|f\|_{C_\mathrm{per}^{0}}$ for any $\gamma'\in (0,1)$  if $k=-1$ by combining Chapter 6.7 with Theorem 8.33 in \cite{gilbarg2001}. In particular, we can choose $\gamma'=\gamma$.
By subtracting a horizontal constant vector field from $\tilde{\bm{u}}$, we can make sure that
\[
\int_{\tilde \Omega_{00}} \tilde{u}_j \,\de V=0, \quad j=1,2.
\]
This modification gives rise to the term $\|\bm{u}\|_{C_\mathrm{per}^0}$ in the right-hand side of the final estimate.
By \cite[Theorem 2.1]{lokharu2019} there is a vector potential $\tilde{\bm{v}}\in C_\mathrm{per}^{k+2, \gamma}(\overline{\tilde \Omega^0}; \R^3)$ with $\tilde{\bm{v}}\times \bm{n}=0$ on $\partial \tilde \Omega^0$ such that $\nabla \times \tilde{\bm{v}}=\tilde{\bm{u}}$ and $\nabla \cdot \tilde{\bm{v}}=0$ (the proof in \cite{lokharu2019} is written for more general domains, but only in the case $k=0$; an inspection of the proof shows that it also applies for any $k\ge -1$). Then, 
$\nabla \times (\nabla \times \tilde{\bm{v}}) =\bm{w}$, where $\bm{w}$ denotes the right-hand side in \eqref{eq:curl-tilde-u}, and using \eqref{eq:curlcurl} and the fact that $\tilde{\bm{v}}$ is divergence-free, we obtain
    \begin{alignat*}{2}
            -\Delta \tilde{\bm{v}}&=
           \bm{w}_0+\textstyle \sum_{j=1}^3\partial_j \bm{w}_j \quad &&\text{in } \tilde \Omega^0, \\
            \nabla \cdot \tilde{\bm{v}} &= 0 &&\text{in } \tilde \Omega^0,\\
            \tilde{\bm{v}} \times \bm{n} &= 0 &&\text{on } \partial \tilde \Omega^0.
    \end{alignat*}
As usual, we replace this with a system where the second equation is just imposed on the boundary.
We note that this system has a unique solution $\tilde{\bm{v}}$ in $C_\mathrm{per}^{k+2, \gamma}$ with $\int_{\tilde \Omega^0} \tilde{v}_3 \,\de V=0$, and that
\[
\|\tilde{\bm{v}}\|_{C_\mathrm{per}^{k+2,\gamma}} \lesssim \|{\bm{w}}_0\|_{C_\mathrm{per}^{k,\gamma}}+ \textstyle \sum_{j=1}^3 \|{\bm{w}}_j\|_{C_\mathrm{per}^{k+1,\gamma}}, \quad k \ge 0,
\]
and
\[
\|\tilde{\bm{v}}\|_{C_\mathrm{per}^{1,\gamma}} \lesssim \|{\bm{w}}_0\|_{C_\mathrm{per}^{0}}+ \textstyle \sum_{j=1}^3 \|{\bm{w}}_j\|_{C_\mathrm{per}^{0,\gamma}}, \quad k=-1.
\]
Indeed, if the right-hand side is zero, then we directly get that $\tilde{v}_1=\tilde{v}_2=0$ since they satisfy $\Delta \tilde{v}_j=0$ with Dirichlet boundary conditions. On the other hand, $\tilde{v}_3$ satisfies the same equation, but with a Neumann condition, and hence is constant. By the assumption of zero average, we get $\tilde{v}_3=0$. We refer to \cite[Theorem 6.6 and Theorem 8.33]{gilbarg2001} for the global Schauder estimate for $\tilde{v}_1$ and $\tilde{v}_2$. For $\tilde{v}_3$, we can write the boundary condition as  $\partial_3 \tilde{v}_3=-\partial_1 \tilde{v}_1-\partial_2 \tilde{v}_2$ and consider the right-hand side as given, and again appeal to \cite[Chapter 6.7 and Theorem 8.33]{gilbarg2001}.
Transforming back using $J$, we obtain \eqref{eq:frozen estimate 1} and \eqref{eq:frozen estimate 2}.
\end{proof}

\begin{proof}[Proof of Theorem \ref{thm-app:schauder}]
We first consider the case $k\ge 0$.
We can assume that there is $\bm{P}$ and $\bm{Q}$ such that
\[
\|\bar{\bm{u}}\|_{C_\mathrm{per}^{k+1,\gamma}} \le 2\frac{|D^{k+1} \bar{\bm{u}}(\bm{P})-D^{k+1}\bar{\bm{u}}(\bm{Q})|}{|\bm{P}-\bm{Q}|^\gamma}.
\]
If not,
\[
[D^{k+1} \bar{\bm{u}}]_\gamma \le \frac12 \|\bar{\bm{u}}\|_{C_\mathrm{per}^{k+1,\gamma}}
\]
which implies $[D^{k+1} \bar{\bm{u}}]_\gamma \le 2\|\bar{\bm{u}}\|_{C_\mathrm{per}^{k+1}}$, 
and a standard interpolation argument gives
\[
\|\bar{\bm{u}}\|_{C_\mathrm{per}^{k+1,\gamma}} \lesssim \|\bar{\bm{u}}\|_{C_\mathrm{per}^0}.
\]
We next let $\theta\in (0,1)$ (independent of $\bar{\bm{u}}$, $\bar{\bm{w}}_j$ and $g$) be determined later and consider the following two cases.

Case 1: $|\bm{P}-\bm{Q}|\ge \theta$. This implies that
\[
\|\bar{\bm{u}}\|_{C_\mathrm{per}^{k+1,\gamma}} \le 2\frac{|D^{k+1} \bar{\bm{u}}(\bm{P})-D^{k+1}\bar{\bm{u}}(\bm{Q})|}{|\bm{P}-\bm{Q}|^\gamma} \le 4\theta^{-\gamma} \|D^{k+1}\bar{\bm{u}}\|_{C_\mathrm{per}^0}
\]
and we can again conclude by interpolation once $\theta$ has been fixed.

Case 2:  $|\bm{P}-\bm{Q}|<\theta$. Without loss of generality, we assume that $\bm{P}'\in B_{00}$, where $\bm{P} = (\bm{P}',P_z)$, and that $\theta$ is so small that $\overline{B_{2\theta}(\bm{P}')}\subset B_{00}$. Consider a smooth cut-off function $\zeta\in C^\infty(\R^3)$ such that $\zeta(\bm{x})\equiv 1$ when $|\bm{x}-\bm{P}|\le \theta$ and $\zeta(\bm{x})\equiv 0$ when $|\bm{x}-\bm{P}|\ge 2\theta$.  Additionally, choose $\zeta$ such that $|D^\ell \zeta|\le C\theta^{-\ell}$. Finally, extend $\zeta$ periodically with respect to the lattice $\Lambda$ in $\bm{x}'$. Now consider $\bar{\bm{v}}\coloneqq \zeta \bar{\bm{u}}$ and note that it satisfies
\begin{subequations}
    \begin{alignat}{2}
            \nabla \times (A_0 \bar{\bm{v}}) &=\zeta \bar{\bm{w}} - A\bar{\bm{u}}\times \nabla \zeta +\nabla \times ((A_0-A)\bar{\bm{v}}) \quad &&\text{in } \Omega^0, \\
            \nabla \cdot \bar{\bm{v}} &= \bar{\bm{u}}\cdot \nabla \zeta &&\text{in } \Omega^0, \\
            \bar{\bm{v}} \cdot \bm{n} &= 0 &&\text{on } \partial \Omega^0, 
    \end{alignat}
\end{subequations}
where $A_0=A(\bm{P})$.
Applying  Lemma \ref{lemma:frozen}, estimate \eqref{eq:frozen estimate 2}, we get
\begin{equation}
\label{eq:perturbation estimate}
\begin{aligned}
\|\bar{\bm{v}}\|_{C_\mathrm{per}^{k+1,\gamma}}&\le C'(\|\bar{\bm{v}}\|_{C_\mathrm{per}^0}+\|\zeta \bar{\bm{w}}\|_{C_\mathrm{per}^{k,\gamma}}+\|A\bar{\bm{u}}\times \nabla \zeta\|_{C_\mathrm{per}^{k, \gamma}} +\|(A_0-A) \bar{\bm{v}}\|_{C_\mathrm{per}^{k+1,\gamma}}+\|\bar{\bm{u}}\cdot \nabla \zeta\|_{C_\mathrm{per}^{k, \gamma}})\\
&\le C''(\theta [ D^{k+1} \bar{\bm{u}}]_{\gamma}+
\theta^{-(k+1+\gamma)}\|\bar{\bm{u}}\|_{C_\mathrm{per}^{k+1}}  +  \theta^{-(k+\gamma)}\|\bar{\bm{w}}\|_{C_\mathrm{per}^{k,\gamma}}),
\end{aligned}
\end{equation}
for some constants $C', C'' \ge 0$.
Here we have used that $A$ is in $C_\text{per}^{k+1, \gamma}$.
Choosing $\theta<1/(4C'')$ yields the estimate
\[
\|\bar{\bm{v}}\|_{C_\mathrm{per}^{k+1,\gamma}} \le \frac14 [D^{k+1} \bar{\bm{u}}]_\gamma+ 
C'''(\|\bar{\bm{u}}\|_{C_\mathrm{per}^{k+1}} +\|\bar{\bm{w}}\|_{C_\mathrm{per}^{k,\gamma}} )
\]
for some constant $C'''\ge 0$.
In particular, we get that
\begin{align*}
\frac12 \|\bar{\bm{u}}\|_{C_\mathrm{per}^{k+1,\gamma}} &\le \frac{|D^{k+1}\bar{\bm{u}}(\bm{P})-D^{k+1}\bar{\bm{u}}(\bm{Q})|}{|\bm{P}-\bm{Q}|^\gamma}=
\frac{|D^{k+1} \bar{\bm{v}}(\bm{P})-D^{k+1}\bar{\bm{v}}(\bm{Q})|}{|\bm{P}-\bm{Q}|^\gamma}\\
&
 \le \frac14 [D^{k+1}\bar{\bm{u}}]_\gamma+ 
C'''(\|\bar{\bm{u}}\|_{C_\mathrm{per}^{k+1}} +\|\bar{\bm{w}}\|_{C_\mathrm{per}^{k,\gamma}} ).
\end{align*}
and hence
\[
\|\bar{\bm{u}}\|_{C_\mathrm{per}^{k+1,\gamma}}\le 4C'''(\|\bar{\bm{u}}\|_{C_\mathrm{per}^{k+1}} +\|\bar{\bm{w}}\|_{C_\mathrm{per}^{k,\gamma}} ).
\]
Using interpolation, we then get
\[
\|\bar{\bm{u}}\|_{C_\mathrm{per}^{k+1,\gamma}}\le C(\|\bar{\bm{u}}\|_{C_\mathrm{per}^{0}} +\|\bar{\bm{w}}\|_{C_\mathrm{per}^{k,\gamma}}),
\]
for some $C=C(\delta^{-1},R)$. This proves \eqref{new est-u_1_gamma}.

To prove \eqref{new est-u_0_gamma} we simply replace \eqref{eq:frozen estimate 2} by  \eqref{eq:frozen estimate 1} in the estimate \eqref{eq:perturbation estimate} and get
\begin{align*}
\|\bar{\bm{v}}\|_{C_\mathrm{per}^{0,\gamma}}&\le C'(\|\bar{\bm{v}}\|_{C_\mathrm{per}^{0}}+ \|\zeta \bar{\bm{w}}\|_{C_\mathrm{per}^{0}}+\|A\bar{\bm{u}}\times \nabla \zeta\|_{C_\mathrm{per}^{0}} +\|(A_0-A) \bar{\bm{v}}\|_{C_\mathrm{per}^{0,\gamma}}+\|\bar{\bm{u}}\cdot \nabla \zeta\|_{C_\mathrm{per}^{0}})\\
&\le C''(\theta^\gamma [\bar{\bm{u}}]_\gamma+\theta^{-1} \|\bar{\bm{u}}\|_{C_\mathrm{per}^{0}} +\|\bar{\bm{w}}\|_{C_\mathrm{per}^{0}}),
\end{align*}
where we have used $A \in C_\text{per}^{0, \gamma}$. The rest of the argument proceeds in the same way as before.
\end{proof}

\section{Analyticity}\label{app:analyticity}
In this section, we will prove that the solutions to \eqref{eq:formulation-without-integral-constraint} are real analytic provided that they are regular enough. 

\begin{theorem} \label{thm_analyticity}
Let $\bm q$  be a point on the surface $\Gamma \coloneqq \{z=\eta(\bm{x}^\prime)\}$. Assume  $(\bm{u},\eta)$ solves \eqref{eq:formulation-without-integral-constraint}, and that $\bm{u}$ and $\eta$ are of class $C^{2, \gamma}$, with $\eta>-d$ locally near $\bm q$. Then $\bm{u},\eta \in C^{\omega}$ in a possibly smaller neighbourhood of $\bm q$.
\end{theorem}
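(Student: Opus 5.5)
We plan to reduce the free-boundary problem near $\bm q$ to a nonlinear elliptic system on a fixed half-ball, and then invoke the classical analyticity theorem for nonlinear elliptic systems satisfying the complementing (Lopatinskii--Shapiro) condition, due to Morrey (see \cite{morrey1966}, Theorem 6.7.6', and the systems framework of \cite{agmon1964}). Interior analyticity of $\bm{u}$ is immediate. Taking a further curl of the Beltrami equation and using the identity \eqref{eq:curlcurl} with $\nabla\cdot\bm u=0$ gives $-\Delta\bm u=\alpha^2\bm u$, so $\bm u$ is real analytic in $\Omega^\eta$. The real work is at the boundary, where the domain itself is part of the unknown.

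\textbf{Step 1: flattening.} Near $\bm q$ we flatten with the transformation $\frakF$ from \eqref{eq:flattening-trafo}. Only the local form matters, and since $\eta>-d$ near $\bm q$ it is a $C^{2,\gamma}$ diffeomorphism there. We then take unknowns $(\bar{\bm u},\eta)$ on a half-ball $B^+$ with flat boundary $\{z=0\}$. The equations become:
\begin{itemize}
\item the div-curl system $\nabla\times(A(\eta)\bar{\bm u})=\alpha\bar{\bm u}$ and $\nabla\cdot\bar{\bm u}=0$ in $B^+$;
\item the kinematic condition $\bar u_3=0$ on $z=0$;
\item the Bernoulli condition $\tfrac12 B(\bar{\bm u},\eta)+g\eta-\sigma\nabla\cdot(\nabla\eta/\sqrt{1+|\nabla\eta|^2})=Q$ on $z=0$.
\end{itemize}
The coefficients of this system depend analytically on $\nabla\eta$ and $\eta$, which we treat as unknowns. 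Since $\eta$ depends only on $\bm x'$, we extend it as $z$-independent and add the equation $\partial_z\eta=0$. Alternatively, to get a standard setting, we replace the first-order system by the second-order one for $\bar{\bm u}$, namely \eqref{eq:transformed Laplace} with right-hand side $\alpha^2$ times lower-order terms. Its boundary conditions would then be $\bar u_3=0$, the tangential components of the curl equation (of first order in $\bar{\bm u}$), and the Bernoulli condition (of second order in $\eta$).

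\textbf{Step 2: ellipticity and complementing condition.} We assign Douglis--Nirenberg weights: $\bar{\bm u}$ of order $2$ in the interior equation, and $\eta$ treated as a boundary unknown of order $2$ through the capillarity term, in the style of Kinderlehrer--Nirenberg--Spruck free-boundary analyticity arguments. The principal part then decouples. In the interior it is $-\Delta$ conjugated by the constant matrices frozen at $\bm q$. On the boundary it consists of $\bar u_3=0$, the tangential-curl conditions $\partial_3\bar u_{j}-\partial_j\bar u_3=\ldots$, and $-\sigma\,\curlM(\nabla\eta):D^2\eta$ plus coupling terms from $\bar{\bm u}$. The coupling terms are of lower weight, because $B$ involves $\bar{\bm u}$ without derivatives and $\nabla\eta$ only at order one.

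Under these weights, the complementing condition for $\bar{\bm u}$ reduces to the one for the Laplacian with the mixed boundary conditions (Dirichlet-type for tangential components after frozen coordinates, Neumann for the normal component). This is exactly the uniquely solvable half-space structure already used in Lemma \ref{lemma:frozen}. For $\eta$, it reduces to the strict ellipticity of $\curlM$. We expect the principal symbol to be block-triangular, so that the complementing condition holds.

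\textbf{Step 3: regularity and conclusion.} We apply Morrey's theorem (or the Agmon--Douglis--Nirenberg-based analyticity result for nonlinear elliptic boundary problems). This requires the solution to lie in a Hölder class high enough for the nonlinear structure to be linearised, and $C^{2,\gamma}$ for both $\bm u$ and $\eta$ is exactly what the hypothesis provides. The theorem then gives that $(\bar{\bm u},\eta)$ is analytic up to $z=0$ near $\bm q$. Since $\frakF$ is analytic whenever $\eta$ is, transforming back shows that $\bm u$ and $\eta$ are analytic in a smaller neighbourhood of $\bm q$.

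\textbf{Main obstacle.} The main difficulty is Step 2: choosing weights so that the coupled system in $(\bar{\bm u},\eta)$ is elliptic in the Douglis--Nirenberg sense and verifying the complementing condition. The first-order div-curl structure and the fact that $\eta$ lives only on the boundary do not fit the textbook form directly. Our first attempt would be a hodograph-free partial Legendre-type reduction: keep the flattening, convert to the second-order system, and check the complementing condition by explicitly solving the frozen ODE system in the normal variable.
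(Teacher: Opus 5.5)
There is a genuine gap, and it is the one you flag as the main obstacle: Steps 1--2 never produce a problem to which a fixed-domain analyticity theorem of Morrey or Agmon--Douglis--Nirenberg type applies. Those theorems require every unknown to be defined in the domain and to satisfy a Douglis--Nirenberg elliptic system there, with the right number of complementing boundary conditions.

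Extending $\eta$ independently of $z$ and adding $\partial_z\eta=0$ destroys ellipticity. That row has principal symbol $i\xi_3$, or $0$ if you weight it as lower order, so the principal determinant vanishes at $\bm\xi=(\bm\xi',0)$ with $\bm\xi'\neq 0$.

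Nor is the coupling to $\eta$ lower order. After flattening, $D^2\eta$ enters the interior equation for $\bm u\circ\frakF$, and $D^3\eta$ enters \eqref{eq:transformed Laplace} for the Piola field. In addition, $D^2\eta\,\bar{\bm u}$ enters the flattened tangential curl conditions. So the block-triangular structure you ``expect'' is exactly what needs proof, with weights for a function that lives only on the boundary. Your suggested partial Legendre/hodograph reduction is also not available here: the Kinderlehrer--Nirenberg--Spruck transform needs a scalar unknown whose level set is the free boundary, and the Beltrami system provides none. A smaller slip: $\bar{\bm u}=\frakG\bm u$ contains $\nabla\eta$ through $D\frakF$, so it is only $C^{1,\gamma}$ when $\eta\in C^{2,\gamma}$, not $C^{2,\gamma}$ as Step 3 assumes.

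The missing ingredient is a free-boundary analyticity theorem that treats the unknown graph as part of the problem. The paper uses \cite[Theorem 3.1]{koch2005} of Koch, Leoni and Morini, applied in the physical domain without flattening. Taking a curl of the Beltrami equation gives $\Delta\bm u+\alpha^2\bm u=0$. On $\Gamma$ one imposes $\bm u\cdot\bm n=0$, $\nabla\cdot\bm u=0$ and $(\nabla\times\bm u)\cdot\bm n=0$, the last two obtained by evaluating the interior equations on the boundary. The Bernoulli condition serves as the transmission condition, with weights $s_k=0$, $t_j=2$, $r_1=-2$, $r_2=r_3=-1$, $\mu=3$, $r_0=1$, $t_0=2$.

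The hypotheses then reduce to the checks you anticipated. First, the Laplacian is diagonal, so the system is elliptic. Second, in the frozen half-space problem with $u_j=A_je^{-|\bm\xi'|z}e^{i\bm\xi'\cdot\bm x'}$, the condition $u_3=0$ forces $A_3=0$. The divergence and normal-curl conditions then give a $2\times 2$ system for $(A_1,A_2)$ with determinant $|\bm\xi'|^2$, so $A_1=A_2=0$. Third, the mean curvature term is a multiple of $\operatorname{tr}(D_\tau\bm n)$, so the Bernoulli condition depends on $D_\tau\bm n$ in a sign-definite way.

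Finally, your description of the frozen boundary structure is reversed. The condition $u_3=0$ gives Dirichlet data for the normal component, and the tangential curl conditions give Neumann-type data for the tangential components. The Dirichlet-tangential/Neumann-normal structure of Lemma~\ref{lemma:frozen} belongs to the vector potential $\tilde{\bm v}$, not to the velocity. The complementing condition holds either way, so this slip is harmless, but the appeal to that lemma is misplaced.
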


\begin{proof}
We begin by deriving an elliptic system from \eqref{eq:formulation-without-integral-constraint}, to which we apply the regularity theory by Koch, Leoni and Morini \cite{koch2005}. Taking the curl of the first equation in \eqref{eq:formulation-without-integral-constraint} and using \eqref{eq:curlcurl} together with the divergence-free condition, we get $\Delta \bm{u}=-\alpha^2 \bm{u}$. Thus, $\bm u$ satisfies
\begin{subequations}
    \begin{alignat}{2}
            \Delta  \bm{u} +\alpha^2 \bm{u} &=0 &&\text{in } \Omega^\eta, \label{eq:elliptic system}\\
            \bm{u} \cdot \bm{n} &= 0 &&\text{on } \partial\Omega^\eta, \label{eq:tangential BC} \\
            \nabla \cdot \bm{u} &= 0 &&\text{on } \partial\Omega^\eta, \label{eq:div BC}\\
             \nabla \times \bm{u}\cdot \bm n &= 0 &&\text{on } \partial\Omega^\eta, \label{eq:curl BC}\\
            \dfrac{1}{2} \snorm{\bm{u}}^2 + g\eta - 2\sigma K_M &= Q \qquad &&\text{on } \Gamma, \label{eq:transmission BC}
    \end{alignat}
\end{subequations}
where \eqref{eq:div BC}--\eqref{eq:curl BC} follow by combining the interior equations and the boundary conditions and evaluating on the boundary. We claim that this satisfies the hypotheses of \cite[Theorem 3.1]{koch2005}, namely that \eqref{eq:elliptic system}--\eqref{eq:curl BC} constitute an elliptic system with complementing boundary conditions, and that \eqref{eq:transmission BC} is a transmission condition in the sense of \cite{koch2005}. Here $N=n=3$, and we assign the weights $s_k=0$, $t_j=2$ and $r_1=-2$, $r_2=r_3=-1$, so that $\mu=3$ and $r_0=1$, $t_0=2$. 
It is clear that the system \eqref{eq:elliptic system} is elliptic since it is diagonal with the Laplacian as principal part. To verify that the boundary conditions \eqref{eq:tangential BC}--\eqref{eq:curl BC} are complementing at $\bm q \in \Gamma$, we first note that by rotation and translation invariance, we can assume that $\bm q=\bm 0$ and $\nabla \eta(\bm 0)=0$. Then, the principal part of system frozen at $\bm q$ takes the form
\begin{subequations}
    \label{eq:complementing sys}
    \begin{alignat}{2}
    \Delta \bm{u}&= 0  && \text{in  }  z>0, \\
    u_3 &= 0 && \text{on  }  z=0,\\    \partial_x u_1+\partial_y u_2+\partial_z u_3 &= 0 && \text{on  } z=0,\\
    \partial_x u_2-\partial_y u_1 &= 0 \qquad && \text{on  }  z=0.
    \end{alignat}
\end{subequations}
We show that \eqref{eq:complementing sys} has no nontrivial  solution of the form $\bm u= \bar{\bm{u}}(z)e^{i\bm{\xi}'\cdot \bm{x}'}$, with $\bm{\xi}' \in \mathbb{R}^2 \setminus \{\bm{0}\}$, which is bounded as $z \to \infty$. Substituting the above form into \eqref{eq:complementing sys}, we obtain
\begin{subequations}
    \label{eq:complementing_sys_ode}
    \begin{alignat}{2}
    -\snorm{\bm{\xi}'}^2 \bar{\bm{u}}+\partial^2_{\tilde{z}}\bar{\bm{u}} &= 0  &&\text{in  } z \geq0, \label{eq:compl u} \\
    \bar{u}_3(\bm{0}) &= 0, \label{eq:bdry_compl_u_3}\\
    i\xi_1 \bar{u}_1(\bm{0})+i\xi_2 \bar{u}_2(\bm{0})+\partial_{z} \bar{u}_3(\bm{0}) &= 0, \\
    i\xi_1 \bar{u}_2(\bm{0})-i\xi_2 \bar{u}_1(\bm{0})&= 0
    \end{alignat}
\end{subequations}
From \eqref{eq:compl u} and the fact that only solutions which are bounded as $z \to \infty$ are considered, we have $\bar{u}_j=A_je^{-\snorm{\bm{\xi}'}z}$. Equation \eqref{eq:bdry_compl_u_3} implies that $A_3=0$ and hence $\bar{u}_3=0$. From the last two equations we get
\begin{align*}
    i\xi_1 A_1+i\xi_2 A_2 &= 0,\\
    i\xi_2 A_1-i\xi_1 A_2 &= 0.
\end{align*}
which yields $A_1=A_2=0$ since the determinant is $\snorm{\bm{\xi}'}^2 \neq 0$ for $\bm{\xi}' \in \mathbb{R}^2 \setminus \{\bm{0}\}$. Therefore, the system \eqref{eq:complementing_sys_ode} has only the trivial solution $\bar{\bm{u}}=0$ and hence the complementing condition holds. Finally, the mean curvature in the transmission condition \eqref{eq:transmission BC} can be expressed as $\operatorname{tr} (D_\tau \bm{n})$ (that is, the trace of the tangential gradient of the normal vector) and hence the derivative of the condition \eqref{eq:transmission BC} with respect to $D_\tau {\bm n}$ is a non-zero multiple of the identity matrix, and therefore a sign-definite matrix.
\end{proof}

In the statement of Theorem \ref{thm_analyticity}, we required  the solution $(\bm{u},\eta)$ to be of class  $C^{2, \gamma}$ locally. In fact, this requirement is automatically satisfied for solutions with less regularity. 
\begin{corollary}\label{cor-app:analyticity}
If $(\bm{u},\eta)\in C^{1, \gamma}_\mathrm{per, div, T}(\overline{\Omega^\eta}; \R^3) \times \curlB^{2, \gamma}$ solves \eqref{eq:formulation-without-integral-constraint}, 
then $\eta \in C^\omega(\R^2)$ and $\bm{u}\in C^\omega(\overline{\Omega^\eta}; \R^3)$.
\end{corollary}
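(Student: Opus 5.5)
The plan is to show that $(\bm{u},\eta)$ is $C^{2,\gamma}$ near every point, so that Theorem \ref{thm_analyticity} applies at surface points. Interior and bottom points are handled separately. In the interior, $\Delta\bm{u}+\alpha^2\bm{u}=0$ holds in the distributional sense, so $\bm{u}$ is analytic there. At the flat bed $z=-d$, the boundary conditions $u_3=0$, $\partial_z u_1=\alpha u_2$, $\partial_z u_2=-\alpha u_1$ follow from the curl equation and $u_3=0$. Even reflection of $u_1,u_2$ and odd reflection of $u_3$ then give a solution of a constant-coefficient Beltrami-type system across $z=-d$. Alternatively, one can cite the analytic regularity theory for elliptic systems with complementing boundary conditions. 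Either way, $\bm{u}$ is analytic up to the bottom. Since $\eta$ is periodic, once local analyticity holds at every point, global analyticity on $\R^2$ and on $\overline{\Omega^\eta}$ follows.

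The main step is bootstrapping at the free surface by alternating between the two regularity results already in the paper.
\begin{enumerate}
\item Start from $\eta\in C^{2,\gamma}$ and $\bm{u}\in C^{1,\gamma}$.
\item Pass to the flattened variables, where $\bar{\bm{u}}=\frakG\bm{u}$ solves \eqref{eq:new formulation-flat} with $\bar{\bm{w}}=\alpha\bar{\bm{u}}$. Lemma \ref{lem:bound-flattening-trafo} gives $\bar{\bm{u}}\in C^{1,\gamma}$.
\item Write the dynamic condition as $L(\nabla\eta)\eta=-\tfrac12 B(\bar{\bm{u}},\eta)+Q$. The right-hand side lies in $C^{1,\gamma}$, since $B$ involves $\bar{\bm{u}}$ and first derivatives of $\eta$, both in $C^{1,\gamma}$. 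The coefficients $\curlM(\nabla\eta)$ also lie in $C^{1,\gamma}$.
\item Interior Schauder regularity for the quasilinear operator, exactly as in Theorem \ref{thm:dynBoundaryCond} via \cite[Theorem 6.17]{gilbarg2001}, then gives $\eta\in C^{3,\gamma}$.
\item With $\eta\in C^{3,\gamma}$ we have $A(\eta)\in C^{2,\gamma}$, so Theorem \ref{thm-app:schauder} with $k=1$ and $\bar{\bm{w}}=\alpha\bar{\bm{u}}\in C^{1,\gamma}$ gives $\bar{\bm{u}}\in C^{2,\gamma}$. To apply this locally rather than periodically, multiply by a cutoff, as in its proof; since the problem here is already periodic, the global version suffices.
\item Undo the flattening. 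Because $\frakF\in C^{3,\gamma}$, we get $\bm{u}=\frakG^{-1}\bar{\bm{u}}\in C^{2,\gamma}(\overline{\Omega^\eta})$.
\end{enumerate}

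One point needs care. The problem \eqref{eq:formulation-without-integral-constraint} carries no integral constraint, but this is harmless because Theorem \ref{thm-app:schauder} contains only the lower-order term $\|\bar{\bm{u}}\|_{C^0}$ and does not require uniqueness. So the $C^{2,\gamma}$ regularity comes out directly. The hypothesis $\eta>-d$ ensures that $\eta\in\curlB^{2,\gamma}_{\delta,R}$ for some $\delta,R$.

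With $\bm{u},\eta\in C^{2,\gamma}$, Theorem \ref{thm_analyticity} then applies at every surface point $\bm q$. It gives analyticity in a neighbourhood of $\bm q$, and combined with the interior and bottom arguments this yields the corollary.

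The main obstacle I expect is the bottom boundary. Theorem \ref{thm_analyticity} is stated only for surface points, so analyticity up to $z=-d$ requires its own argument: either the reflection argument, or a check that $u_3=0$, $\nabla\cdot\bm{u}=0$, $(\nabla\times\bm{u})\cdot\bm{n}=0$ are complementing. The latter is the same computation as \eqref{eq:complementing sys}, now on a flat boundary with no transmission condition. After that, classical analytic boundary regularity (Morrey) completes the proof.
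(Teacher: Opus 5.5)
Your bootstrap is the paper's scheme: the dynamic condition gives $\eta\in C^{3,\gamma}$, then you upgrade $\bar{\bm u}$ to $C^{2,\gamma}$, unflatten, and use Theorem \ref{thm_analyticity} at the surface and analytic boundary regularity at the bed. However, step~5 does not work as written.

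Theorem \ref{thm-app:schauder} is an a priori estimate. It is stated for a solution already in $C^{k+1,\gamma}_\mathrm{per}$, and its proof genuinely needs this, because it absorbs $\tfrac14[D^{k+1}\bar{\bm u}]_\gamma$ into the left-hand side, which is legitimate only if that seminorm is finite. With $k=1$ you would have to assume $\bar{\bm u}\in C^{2,\gamma}$, which is exactly what you want to prove.

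What turns estimates into regularity here is existence in the better space combined with uniqueness in the weaker one, and that is precisely where the integral constraint enters. So your remark that the missing constraint is harmless because no uniqueness is needed has it backwards. The fix, which is what the paper does, is as follows.
\begin{enumerate}
\item Define $M_j\coloneqq\int_{\Omega_{00}^0}\bar u_j\,\de V$ $\bigl(=\int_{\Omega_{00}^\eta}u_j\,\de V\bigr)$ from the given solution. Then $\bar{\bm u}$ solves \eqref{eq:formulation-flat} with $\bar{\bm w}=\alpha\bar{\bm u}\in C^{1,\gamma}_\mathrm{per, div, T}$ and this $\bm M$.
\item Since $\eta\in\curlB^{3,\gamma}$, Theorem \ref{thm:solution-flattened-div-curl} with $k=1$ provides a solution $\bar{\curlL}(\alpha\bar{\bm u},\eta,\bm M)\in C^{2,\gamma}$ of the same problem.
\item Uniqueness in $C^{1,\gamma}$ (the case $k=0$, resting on Proposition \ref{prop:div-curl-isomorphism}) then forces $\bar{\bm u}=\bar{\curlL}(\alpha\bar{\bm u},\eta,\bm M)\in C^{2,\gamma}$.
\end{enumerate}
With this replacement, the rest of your bootstrap goes through.

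At the bed, drop the reflection option; it is wrong. Reflecting $u_1,u_2$ evenly and $u_3$ oddly across $z=-d$ gives a field with $\nabla\times\tilde{\bm u}=-\alpha\tilde{\bm u}$ below the bed, since reflection reverses orientation. It is therefore not a solution of a single constant-coefficient system. Moreover, because $\partial_z u_1=\alpha u_2$ and $\partial_z u_2=-\alpha u_1$ on $z=-d$, the reflected horizontal components have a kink there unless $\bm u$ vanishes on the bed. Already the laminar flow $\bm U^{(1)}$ reflects to a field that is not $C^1$ across $z=-d$ whenever $\alpha\neq 0$.

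Only your second route is valid. Check that $u_3=0$, $\nabla\cdot\bm u=0$ and $(\nabla\times\bm u)\cdot\bm n=0$ are complementing for $\Delta+\alpha^2$ on the flat bed, which is the same computation as at the surface but without the transmission condition. Then apply analytic boundary regularity to $\bm u\in C^{2,\gamma}$. This is what the paper does via \cite[Theorem 2.2]{koch2005} with $r_0=0$ (see also \cite[Theorem 6.8.2]{morrey1966}).
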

\begin{proof}
Indeed, $\bar{\bm{u}} \in C^{1, \gamma}$ by Lemma \ref{lem:bound-flattening-trafo}, and by Theorem \ref{thm:dynBoundaryCond} (with $Q(\bm c)$ replaced by a general constant $Q$), 
\[\eta=\mathcal{K}\left(\nabla \eta, Q - \frac{1}{2} B(\bar{\bm{u}},\eta)\right) \in C^{3, \gamma}\]
since $\nabla\eta$ and $B(\bar{\bm{u}},\eta)$ are of class $C^{1, \gamma}$ (recall that $B(\bar{\bm{u}},\eta)$ contains at most one derivative in $\eta$). Consequently,
\[\bar{\bm{u}} = \bar{\curlL}(\alpha \bar{\bm{u}}, \eta, \bm{M}) \in C^{2, \gamma}\]
in view of Theorem \ref{thm:solution-flattened-div-curl}. Applying Lemma \ref{lem:bound-flattening-trafo} in the opposite direction, we obtain $\bm u \in C^{2, \gamma}$. It now follows by 
\cite[Theorem 6.8.1]{morrey1966} that $\bm u$ is analytic in $\Omega^\eta$.  By Theorem \ref{thm_analyticity}, $\eta$ is also analytic and $\bm u$ is analytic in a neighbourhood of the surface $\{z=\eta\}$. Finally, by  
 applying \cite[Theorem 2.2]{koch2005} with $r_0=0$ (see also \cite[Theorem 6.8.2]{morrey1966}), we obtain that $\bm u$ is  analytic in a neighbourhood of the bottom $\{z=-d\}$.
\end{proof}
\begin{remark}
It is, in fact, also possible to prove a purely local version where the regularity assumption in Theorem \ref{thm_analyticity} is lowered to $\bm u \in C^{1, \gamma}$ locally near $\bm q$, but since it is a bit technical and we do not need it, we refrain from writing down a proof. See, for example, \cite[Remark 4.7]{koch2005} for similar results in the scalar case.
\end{remark}

\begin{remark}
Note that Theorem \ref{thm_analyticity} was proved by Craig and Matei \cite{craig2007regularity} in the case $\alpha=0$, independently from \cite{koch2005} but using a similar approach. See also \cite{ChenLiWang13, Henry12, WeissZhang12} for similar results for two-dimensional gravity-capillary waves with vorticity.
\end{remark}

\printbibliography

\bigskip
\bigskip

\hypertarget{affBH}{}

\noindent \textbf{Bastian Hilder} \\
\noindent Department of Mathematics, Technische Universität München, Boltzmannstraße 3, 85748 Garching b. München, Germany \\
\textit{Email address}: \texttt{bastian.hilder@tum.de}

\bigskip
\hypertarget{affGT}{}

\noindent \textbf{Giang To} \\
\noindent Centre for Mathematical Sciences, Lund University, PO Box 118, 22100 Lund, Sweden \\
\textit{Email address}: \texttt{giang.to@math.lu.se}

\bigskip
\hypertarget{affEW}{}

\noindent \textbf{Erik Wahlén} \\
\noindent Centre for Mathematical Sciences, Lund University, PO Box 118, 22100 Lund, Sweden \\
\textit{Email address}: \texttt{erik.wahlen@math.lu.se}

\end{document}